\newcommand{\e}{\varepsilon}
\newcommand{\di}{\,\mathrm{div}}
\newcommand{\DIV}{\,\mathrm{div}}
\newcommand{\R}{\mathbb{R}}
\newcommand{\N}{\mathbb{N}}
\newcommand{\C}{\mathcal}
\newcommand{\ol}{\overline}
\newcommand{\dx}{\,\mathrm dx}
\newcommand{\ds}{\,\mathrm ds}
\newcommand{\dxt}{\,\mathrm dx\,\mathrm dt}
\newcommand{\weaklim}{\rightharpoonup}
\newcommand{\CC}{\mathbf{C}}
\newcommand{\db}{\mathbf{d}}
\newcommand{\ub}{\mathbf{u}}
\newcommand{\vb}{\mathbf{v}}
\newcommand{\zb}{\mathbf{z}}
\newcommand{\fb}{\mathbf{f}}
\newcommand{\ukkt}{\mathbf{u}^{k-1,t}}
\newcommand{\Tb}{ \mathbf{T} }
\newcommand{\uu}{\mathbf{u}}
\newcommand{\bell}{\bm{\ell}}
\newcommand{\ph}{\varphi}
\newcommand{\divv}{\text{div}}
\newcommand{\io}{\int_\Omega}
\newcommand{\aein}{\text{ a.e. in }}
\newcommand{\capa}{\text{cap}}
\newcommand{\red}{\color{black}}
\newcommand{\black}{\color{black}}
\newcommand{\af}{{\mathfrak a}}
\newcommand{\Kc}{K}
\newcommand{\tA}{\tilde{\mathcal A}}
\newcommand{\ac}{\accentset{\circ}}
\newcommand{\triplenorm}{\ensuremath{| \! | \! |}}
\newtheorem{thm}{Theorem}[section]
\theoremstyle{plain}
\newtheorem{remark}[thm]{{\textbf Remark}}
\newtheorem{lemma}[thm]{{\textbf Lemma}}
\newtheorem{theorem}[thm]{{\textbf Theorem}}
\newtheorem{proposition}[thm]{{\textbf Proposition}}
\newtheorem{definition}[thm]{{\textbf Definition}}
\newtheorem{corollary}[thm]{{\textbf Corollary}}
\date{January 15, 2016}
\author{Christian Heinemann\footnote{Weierstrass Institute for Applied Analysis and Stochastics (WIAS), Mohrenstr. 39, 10117 Berlin (Germany),
	E-mail: \texttt{christian.heinemann@wias-berlin.de}}~
	and Kevin Sturm\footnote{Universit\"at Duisburg-Essen, Fakult\"at f\"ur Mathematik,
  Thea-Leymann-Stra\ss e 9, 45127 Essen (Germany),
	E-mail: \texttt{kevin.sturm@uni-due.de}}
	}
\begin{document}
\title{Shape optimisation for a class of semilinear variational inequalities 
	with applications to damage models}
\maketitle

\begin{abstract}
	The present contribution investigates
	shape optimisation problems
	for a class of semilinear elliptic variational inequalities
	with Neumann boundary conditions.
	Sensitivity estimates and material derivatives
	are firstly derived in an abstract operator setting where
	the operators are defined on polyhedral subsets of reflexive Banach
	spaces.
	The results are then refined for
	variational inequalities arising from minimisation problems
	for certain convex energy functionals
	considered over upper obstacle sets in $H^1$.
	One particularity is that we allow for dynamic obstacle functions which
	may arise from another optimisation problems.
	We prove a strong convergence property for the material derivative
	and establish state-shape derivatives under regularity assumptions.
	Finally, as a concrete application from continuum mechanics, we show how the dynamic obstacle case can be used
	to treat shape optimisation problems for time-discretised brittle damage models
	for elastic solids.
	We derive a necessary optimality system for optimal shapes
	whose state variables approximate desired damage patterns and/or displacement fields.
\end{abstract}

\noindent {\it Keywords:}
shape optimisation, semilinear elliptic variational inequalities, optimisation problems in Banach spaces,
obstacle problems, damage phase field models, elasticity;
  \vspace{4mm}

\noindent {\it AMS subject clas\-si\-fi\-ca\-tion:}
49J27,   		
49J40,   		
49Q10,		    
35J61,   		
49K20,   		
49K40,   	 	
74R05,   		
74B99.				

\newpage
\tableofcontents
\newpage

\section{Introduction}
Finding optimal shapes such that a physical system
exhibits an intended behaviour is of great interest for plenty of
engineering applications. For example design questions arise in 
the construction of
air- and spacecrafts, wind and combustion turbines, wave guides and inductor 
coils. 
More examples can be found in \cite{DelZol11} and references therein.
The physical system is usually modelled by a pde or a coupled pde system
supplemented with suitable boundary conditions.
In certain cases the state is given as a minimiser
of an energy, e.g., an equilibrium state of an elastic membrane,
which has to be in \red a\black~set of admissible states.
The solution is then characterised by a variational inequality
holding for test-functions on the sets of admissible states.

The treatment of optimal shape and control problems for variational inequalities
is substantially more difficult as without constraints, where
the sets of admissible states is a linear space.
For optimal control problems there exist a rapidly growing literature exploring
different types of stationarity conditions and their approximations
(see, for instance, \cite{hisu,MP84}).
However shape optimisation problems for systems described by
variational inequalities
are less explored and reveal additional difficulties due to
the intricated structure of the set of admissible domains.
Some results following the paradigm \emph{first optimise-then discretise}
can be found in \cite{JaruSoko03,neitsokozol88,MR691007,MR1055179,sokosolo,MR895743} and 
for the \emph{first discretise-then optimise} approach we refer to 
\cite{HO3,HO1,HO2}.

The main aim of this paper is to establish sensitivity estimates
and material derivatives for certain nonlinear elliptic variational inequalities
with respect to the domain. Our approach is based on the paradigm first optimise-then
discretise, thus the sensitivity is derived in an infinite dimensional setting. 
In order to
\red encapsulate \black
the main arguments needed in the proof
of \red the \black main results and to increase their applicability,
we investigate the optimisation problems firstly on an abstract operator
level formulated over a polyhedric subset $K$ of some reflexive Banach space $V$.
The domain-to-state map is there replaced by a parametrised family
of operators $(\C A_t)$ and sensitivity estimates are shown
in Theorem \ref{thm:energy_sensitivity} and Theorem \ref{thm:sens_At}
under general assumption (see Assumption (O1) and Assumption (O2)).
By \red strengthening\black~the assumptions (see Assumption (O3))
differentiability with respect to the parameter $t$ has been shown
in Theorem \ref{thm:mat_abstract}.
One crucial requirement is the polyhedricity of the closed convex set $K$
on which the operators are defined.
The results are applicable for optimal shape as well as for optimal control problems.

Equipped with the proven abstract results we resort to shape optimisation problems
where the state system is a variational inequality of semilinear elliptic type given by
$$
	u\in K_{\psi_\Omega}
	\;\text{ and }\;\forall \ph\in K_{\psi_\Omega}:\quad dE(\Omega,u;\ph-u)\geq 0
$$
with the energy
$$
	E(\Omega,u)
	=\int_\Omega \frac{1}{2}|\nabla u|^2+\frac{\lambda}{2}|u|^2+W_\Omega(x,u)\dx\quad(\lambda>0)
$$
and the upper obstacle set
$$
	K_{\psi_\Omega}
	=\big\{v\in H^1(\Omega):\,v\leq\psi_\Omega\text{ a.e. in }\Omega\big\}.
$$
In the classical theory
of VI-constrained shape optimisation problems
established in \cite{sokozol92},
linear variational inequalities with constant obstacle
and $W_\Omega(x,u)=f(x)$
for some given fixed function $f:D\to\R$ defined on a ``larger set'' $D\supset\Omega$
have been investigated by means of conical derivatives of projection operators in Hilbert spaces
\red as used\black~in \cite{Mi76}.
For results on topological sensitivity analysis for variational inequalities
and numerical implementations we refer to \cite{HL11} as well as \cite{MR3307830}.

In our paper we allow for semilinear terms in the variational inequality
by including convex contributions to $W_\Omega$ with respect to $u$
and also consider a dependence of $W_\Omega$ and $\psi_\Omega$ on $\Omega$ in a \red quite\black~general sense.
As presented in the last section of this work $\psi_\Omega$ may itself be a solution
of a variational inequality.
Such general $\Omega$-dependence of the obstacle will be referred to as
``dynamic obstacle'' in constrast to the case of a ``static obstacle''
where $\psi_\Omega(x)=g(x)$ for some fixed function $g:D\to\R$.

\red
On the one hand the results for VI-constrained shape optimisation problems
in \cite{sokozol92} are extended in the present contribution
to certain semi-linear cases, dynamic obstacles $\psi_\Omega$ and
dynamic potential functions $W_\Omega$.
On the other hand we establish these results by invoking
abstract sensitivity results for operators on Banach spaces (which we establish before)
and without reformulating the problems by means of projection operators
as done in \cite{sokozol92}.
One advantage of our different technique is that
we encapsulate the main arguments for obtaining material derivatives
in general theorems which are freed of concrete representation
of the (integral) operators.
The occurring operators are supposed to be uniformly monotone (see (O1) (iii) and (O2) (i))
-- a crucial assumption to gain sensitivity estimate in a general setting.
\black

To apply the abstract
results
\red
to the shape optimisation problem mentioned above
\black
we perform the transformation
$u\mapsto y:=u-\psi_{\Omega}$ such that the transformed problem
is formulated over the cone $H_-^1(\Omega)$, i.e., the non-positive
half space of $H^1(\Omega)$.
Existence of the material derivative $\dot y$ which turns out to be
the unique solution of a variational inequality
considered over the cone $T_y(H_-^1(\Omega))\cap kern(dE(u;\cdot))$
and strong convergence of the corresponding difference quotients
are established in Theorem \ref{thm:weakMatDer} and Corollary
\ref{cor:strongMatDer}.
The variational inequality characterising the material derivative $\dot u$
is then established in Corollary \ref{cor:dotU}.
Moreover in the case of a static obstacle and $H^2(\Omega)$-regularity for $u$
we derive relations for the state-shape derivative $u'$
in Theorem \ref{thm:shapeDer} and Corollary \ref{cor:stateShapeDer}.

The theorems for the semilinear case are then applied to
a specific model problem from continuum damage mechanics.
\red Here we \black consider an elastic solid which undergoes deformation
and damage processes in a small strain setting.
The state of damage is modelled by a phase field
variable $\chi$ which influences the material stiffness and which is described
by \red parabolic variational inequality\black~forcing the variable $\chi$
to be monotonically decreasing in time.
We consider a time-discretised version of the evolution system
(but we stay continuous in the spatial components)
where the damage variable fulfills for all time steps the constraints
$$
	\chi^N\leq\chi^{N-1}\leq\ldots\leq\chi^0\leq 1
	\text{ a.e. in }\Omega.
$$
Such constraints lead to $N$-coupled variational inequalities with
dynamic obstacle sets of the type
$$
	K^{k-1}(\Omega)
	=\big\{v\in H^1(\Omega):\,v\leq\chi^{k-1}\text{ a.e. in }\Omega\big\},
	\quad k=1,\ldots,N.
$$
Our objective is to find an optimal shape $\Omega$ such that
the associated displacement fields $(\ub^k)_{k=1}^N$ and damage phase fields
$(\chi^k)_{k=1}^N$
minimise a given tracking type cost functional.
We derive relations for the material derivative and establish necessary optimality
conditions for optimal shapes
which are summarised in Proposition \ref{prop:optimality}.

\paragraph*{Structure of the paper.}

In Section 2 we recall some basics notions from convex analysis.
\red For reader's convenience and for the sake of clarity
we derive in Appendix A tangential and normal cones 
of $K_{\psi_\Omega}$
and prove polyhedricity of $K_{\psi_\Omega}$ by invoking arguments
from \cite{Mi76, bosha, HP05}. \black

In Section 3 we establish sensitivity and material derivative results
in an abstract operator setting (see Theorem \ref{thm:energy_sensitivity},
Theorem \ref{thm:sens_At} and Theorem \ref{thm:mat_abstract}).
Some results are even applicable to quasi-linear problems such as to $p$-Laplace equations.
The advantage of this approach is that the theorems
can be applied to a large class of optimisation problems including
shape optimisation and optimal control problems.

This flexibility is demonstrated in Section 4 where semilinear VI-constrained
shape optimisation problems with an energy and obstacle of type
$E(\Omega,u)$ and $K_{\psi_\Omega}$ from above are treated.
By applying the abstract results from Section 3 we derive
sensitivity estimates for the shape-perturbed problem
in Proposition \ref{prop:At_mon_cont}, material derivatives
in Theorem \ref{thm:weakMatDer} and state-shape derivatives
in Theorem \ref{thm:shapeDer}.

Finally, in Section 5,
we \red invoke results \black from Section 4 \red in order to investigate \black
a \red shape optimisation \black problem \red from \black continuum damage mechanics
where dynamic obstacles \red arise\black.

\section{Notation and basic relations}
For the treatment of variational inequalities
we recall certain well-known cones from convex analysis
(the definitions can, for instance, be found in \cite[Chapter 2.2.4]{bosha} and \cite[Chapter 4.1]{sokozol92}).
Let $K\subseteq V$ be a subset of a real Banach space $V$
and denote by $V^*$ its topological dual space.

The \emph{radial cone}  at $y\in K$ of the set $K$ is defined by
\begin{equation}
	C_y(K) := \{ w\in V: \; \exists t>0, y+tw \in K  \},
	\label{clarke_cone}
\end{equation}
the \emph{tangent cone} at $y$ as
\begin{equation}
	T_y(K) := \overline{C_y(K)}^V
	\label{tangent_cone}
\end{equation}
and the \textit{normal cone} at $y$ as
\begin{equation}
	N_y(K) := \{w^*\in V^*:\; \forall v\in K,\,\langle w^*,v-y\rangle_V\leq 0\}.
	\label{normal_cone}
\end{equation}
Furthermore we introduce the polar cone of a set $K$ as
\begin{equation}
	[K]^\circ := \{w^*\in V^*:\; \forall v\in K,\,\langle w^*,v\rangle_V\leq 0\},
	\label{polar_cone}
\end{equation}
and the orthogonal complements of elements $y\in V$ and $y^*\in V^*$
\begin{align*}
	&[y]^\perp:=\{w^*\in V^*:\;\langle w^*,y\rangle_V=0\},\\
	&kern(y^*):=[y^*]^\perp:=\{w\in V:\;\langle y^*,w\rangle_V=0\}.
\end{align*}
The normal cone may also be written as
\begin{equation}
	N_y(K)=\left[T_y(K) \right]^\circ=\left[C_y(K) \right]^\circ.
	\label{normal_cone_repr}
\end{equation}
In combination with the bipolar theorem (see \cite[Prop. 2.40]{bosha}) we obtain
\begin{align}
	T_y(K)=[[T_y(K)]^\circ]^\circ=[N_y(K)]^\circ.
	\label{bipolar}
\end{align}
We recall that a closed convex set $K\subseteq V$ is
\textit{polyhedric} if (cf. \cite{hisu})
\begin{equation}
	\forall y\in K,\;\forall w\in N_y(K),\quad\overline{C_y(K)\cap [w]^\perp}^V=T_y(K)\cap [w]^\perp.
	\label{polyhedric_set}
\end{equation}
Note that the inclusion ``$\subseteq$'' is always satisfied above.
Due to Mazur's lemma and the convexity of the involved sets,
the closure in $V$ can also be taken in the weak topology.

The following lemma shows a useful implication of \eqref{polyhedric_set}
involving variational inequalities arising from (possibly non-)linear operators.
\begin{lemma}
\label{lemma:polyhedr}
	Let $K\subseteq V$ be a polyhedric subset of $V$.
	\begin{itemize}
		\item[(i)]
			Let $\C A:K\to V^*$ be an operator
			and let $y$ be a solution of the following variational inequality
			\begin{align}
				y\in K\quad\text{and}\quad\forall\ph\in K:\;
				\langle\C A(y),\ph-y\rangle_{V}\geq 0.
				\label{eq:abstrVI}
			\end{align}
			Then it holds
			\begin{align}
				\overline{ C_y(K)\cap \text{kern}(\mathcal A(y))}=T_y(K)\cap \text{kern}(\mathcal A(y)).
				\label{polyhedric_set_impl}
			\end{align}
		\item[(ii)]
			For all $v\in V$ it holds
			\begin{align*}
				\overline{ C_y(K)\cap [v-y]^\perp}=T_y(K)\cap [v-y]^\perp,
			\end{align*}
			where $y$ denotes the projection of $v$ on $K$.
	\end{itemize}
\end{lemma}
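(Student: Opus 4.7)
The plan is to observe that both parts of the lemma follow directly from the polyhedricity condition \eqref{polyhedric_set}, once we identify the right element of the normal cone $N_y(K)$ in each case. In both parts no new analytic tool is required: the entire content is an unpacking of definitions together with a single application of \eqref{polyhedric_set} to an appropriately chosen $w\in N_y(K)$.

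For part (i), I would first rewrite the variational inequality satisfied by $y$, namely $\langle\mathcal{A}(y),\varphi-y\rangle_V\geq 0$ for all $\varphi\in K$, in the equivalent form $\langle-\mathcal{A}(y),\varphi-y\rangle_V\leq 0$ for all $\varphi\in K$. By the definition \eqref{normal_cone} of the normal cone this is precisely the assertion $-\mathcal{A}(y)\in N_y(K)$. Moreover, directly from the definitions one has $kern(\mathcal{A}(y))=[\mathcal{A}(y)]^\perp=[-\mathcal{A}(y)]^\perp$. Therefore, applying the polyhedricity relation \eqref{polyhedric_set} with the choice $w:=-\mathcal{A}(y)\in N_y(K)$ yields \eqref{polyhedric_set_impl} immediately.

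For part (ii), I would use the variational characterisation of the metric projection: in the Hilbert space $V$ the projection $y$ of $v$ onto the closed convex set $K$ is characterised by $(v-y,\varphi-y)_V\leq 0$ for every $\varphi\in K$. Identifying $v-y\in V$ with its image under the Riesz isomorphism $V\to V^*$, this exactly says that $v-y\in N_y(K)$. Polyhedricity of $K$, applied with $w:=v-y$, then gives the claimed identity $\overline{C_y(K)\cap[v-y]^\perp}=T_y(K)\cap[v-y]^\perp$.

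I do not foresee any substantial obstacle. The only points that require attention are notational: recognising that the inequality satisfied by $y$ places the (negative of the) operator value, respectively $v-y$ under the Riesz identification, inside $N_y(K)$, and keeping track of the fact that $kern(\mathcal{A}(y))$ and $[v-y]^\perp$ are the orthogonal complements appearing in \eqref{polyhedric_set}. Everything else is carried by the polyhedricity hypothesis.
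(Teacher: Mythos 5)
Your proposal is correct and follows essentially the same route as the paper: part (i) is the observation that $-\mathcal A(y)\in N_y(K)$ combined with \eqref{polyhedric_set} and $kern(-\mathcal A(y))=kern(\mathcal A(y))$, and part (ii) is the observation that $v-y\in N_y(K)$ via the projection characterisation. Your explicit mention of the Riesz identification in (ii) is a helpful clarification of a step the paper leaves implicit, but it is not a different argument.
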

\begin{proof}
	\textit{To (i):}
	We infer from \eqref{eq:abstrVI} that
	$-\C A(y)\in N_y(K)$.
	Thus definition \eqref{polyhedric_set} implies
	$$
		\overline{ C_y(K)\cap \textit{kern}(-\mathcal A(y))}=T_y(K)\cap \textit{kern}(-\mathcal A(y)).
	$$
	The identity $\textit{kern}(-\C A(y))=\textit{kern}(\C A(y))$ completes
	the proof.\\
	\textit{To (ii):}
	This follows from $v-y\in N_y(K)$.
\end{proof}
Let us consider an important class of polyhedral subsets
which will be utilised in Section \ref{sec:semilinProb} where
semilinear obstacle problems are treated.
\red We fix a \black Lipschitz domain $\Omega\subseteq\R^d$.
Moreover let $\psi\in H^1(\Omega)$ be a given function.
We define the upper obstacle set as
\begin{align}
	K_\psi:=\{w\in H^1(\Omega):\;w\leq \psi\text{ a.e. in }\Omega\}.
	\label{eqn:Kexample}
\end{align}
The proofs of the following results are based on arguments from
\cite[\red Lemma \black 3.1-3.2, \red Theorem \black 3.2]{Mi76}
\red and are carried out in Appendix A.\black
\begin{theorem}
\label{thm:TN}
	Let $y\in K_\psi$ and $K_\psi$ be as in \eqref{eqn:Kexample}. Then it holds
	\begin{subequations}
	\begin{align}
		T_y(K_\psi)={}&\big\{u\in H^1(\Omega):\,\tilde u\leq 0\text{ q.e. on }\{\tilde y=\tilde\psi\}\big\},
			\label{tangSpace}\\
		N_y(K_\psi)={}&\big\{I\in H^1(\Omega)^*:\,I\in H^1(\Omega)_+^*\text{ and }\mu_I(\{\tilde y<\tilde\psi\})=0\big\},
			\label{normalSpace}
	\end{align}
	\end{subequations}
	where
	$\tilde y$ denotes a quasi-continuous representant of $y$ (the same for $\tilde u$ and $\tilde \psi$)
	and $\mu_I\in M_+(\ol\Omega)$ the measure associated to $I$ by Lemma \ref{lemma:identification}.
	
	Please notice that the sets
	\begin{align*}
		&\{\tilde y=\tilde\psi\}:=\{x\in\ol\Omega:\,\tilde y(x)=\tilde\psi(x)\},\\
		&\{\tilde y<\tilde\psi\}:=\{x\in\ol\Omega:\,\tilde y(x)<\tilde\psi(x)\}
	\end{align*}
	are calculated for arguments in $\ol\Omega$ (not only in $\Omega$).
\end{theorem}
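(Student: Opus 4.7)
The proof naturally splits into the tangent cone characterization \eqref{tangSpace} and the normal cone characterization \eqref{normalSpace}, and my approach would follow Mignot \cite{Mi76}, relying on quasi-continuous representatives of $H^1(\Omega)$-functions together with the theory of Sobolev capacity.

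For \eqref{tangSpace}, the inclusion ``$\subseteq$'' is obtained as follows. If $w \in C_y(K_\psi)$, there is $t > 0$ with $y + tw \leq \psi$ a.e., which lifts to $t\tilde w \leq \tilde\psi - \tilde y$ q.e.\ on $\ol\Omega$ (a.e.\ inequalities between $H^1$-functions pass to q.e.\ inequalities between their quasi-continuous versions). Restricting to the coincidence set gives $\tilde w \leq 0$ q.e.\ there. Since the right-hand side of \eqref{tangSpace} is closed under $H^1$-convergence (every $H^1$-convergent sequence admits a subsequence converging q.e., up to a polar set), it contains the closure $T_y(K_\psi) = \overline{C_y(K_\psi)}^{H^1}$. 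For the reverse inclusion, I would construct, for any $u$ with $\tilde u \leq 0$ q.e.\ on $\{\tilde y = \tilde\psi\}$, an explicit approximation in $C_y(K_\psi)$ of the form
$$
u_\varepsilon := -u^- + \min\bigl(u^+,\,(\psi - y)/\varepsilon\bigr),
$$
which satisfies $y + \varepsilon u_\varepsilon \leq \psi$ and hence belongs to $C_y(K_\psi)$. Quasi-everywhere and $L^2$ convergence $u_\varepsilon \to u$ follow from the hypothesis, from $\tilde\psi - \tilde y > 0$ q.e.\ off the coincidence set, and from dominated convergence. Gradient convergence $\nabla u_\varepsilon \to \nabla u$ in $L^2(\Omega)$ is the delicate step: a naive estimate produces a term of order $1/\varepsilon$ multiplying $\nabla(\psi - y)$, which must be controlled using Stampacchia's chain rule -- in particular $\nabla(\psi - y) = 0$ a.e.\ on $\{\tilde\psi = \tilde y\}$ -- together with the observation that the sets $\{u^+ > (\psi - y)/\varepsilon\}$ shrink, as $\varepsilon \to 0$, to a set of capacity zero.

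For \eqref{normalSpace}, I would exploit the identification of positive functionals on $H^1(\Omega)$ with nonnegative Borel measures on $\ol\Omega$ provided by Lemma \ref{lemma:identification}. Testing $I \in N_y(K_\psi)$ against $v = y - \varphi$ with $\varphi \in H^1(\Omega)$, $\varphi \geq 0$ (which lies in $K_\psi$ because $y \leq \psi$), yields $\langle I, \varphi\rangle \geq 0$, so $I \in H^1(\Omega)^*_+$ and admits a representing measure $\mu_I$. To locate its support, I would test with $v = y + \varphi$ for nonnegative $\varphi$ quasi-supported off the coincidence set and satisfying $\varphi \leq \psi - y$, obtaining $\int \tilde\varphi\, d\mu_I \leq 0$; a monotone convergence argument applied to a suitable increasing sequence of such test functions then forces $\mu_I(\{\tilde y < \tilde\psi\}) = 0$. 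The reverse inclusion is direct: for $I$ positive with $\mu_I$ supported in $\{\tilde y = \tilde\psi\}$ and any $v \in K_\psi$, the representation $\langle I, v - y\rangle = \int_{\ol\Omega}(\tilde v - \tilde y)\, d\mu_I$ is $\leq 0$ since $\tilde v \leq \tilde\psi = \tilde y$ holds $\mu_I$-a.e.

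The principal obstacle is the $H^1$-norm convergence of the approximating sequence in the ``$\supseteq$'' direction of \eqref{tangSpace}: bypassing the apparent $1/\varepsilon$ blow-up of the gradient term requires a careful capacity- and chain-rule-based argument rather than routine dominated convergence, and is the one place where the proof departs from standard manipulations.
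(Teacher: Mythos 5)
Your treatment of \eqref{normalSpace} and of the inclusion ``$\subseteq$'' in \eqref{tangSpace} is sound and essentially matches the paper (the paper localises $\mu_I$ by testing with $v=\psi$ and $v=2y-\psi$ to get $\langle I,y-\psi\rangle=0$ and then $\int|\tilde y-\tilde\psi|\,\mathrm d\mu_I=0$, rather than by your monotone-convergence argument, but both routes work). The divergence, and the problem, is in the inclusion ``$\supseteq$'' of \eqref{tangSpace}. The paper does \emph{not} attempt a direct density argument: it proves the normal cone characterisation first and then obtains the tangent cone from the bipolar theorem \eqref{bipolar}, $T_y(K)=[N_y(K)]^\circ$, combined with Lemma \ref{lemma:muInt} and the fact that capacity-null sets are $\mu_I$-null. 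That order of proof is not cosmetic -- it is what lets one avoid exactly the construction you propose.

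Your explicit approximation $u_\varepsilon=-u^-+\min\bigl(u^+,(\psi-y)/\varepsilon\bigr)$ does lie in $C_y(K_\psi)$, but it does not converge to $u$ in $H^1(\Omega)$ in general, and the tools you invoke cannot repair this. Writing $g:=\psi-y\ge 0$ and $A_\varepsilon:=\{u^+>g/\varepsilon\}$, one has $\nabla(u-u_\varepsilon)=\mathbf 1_{A_\varepsilon}\bigl(\nabla u^+-\varepsilon^{-1}\nabla g\bigr)$, so you must show $\varepsilon^{-2}\int_{A_\varepsilon}|\nabla g|^2\dx\to 0$. Stampacchia's rule ($\nabla g=0$ a.e.\ on $\{g=0\}$) and the fact that $\bigcap_\varepsilon A_\varepsilon$ has capacity zero only give $\int_{A_\varepsilon}|\nabla g|^2\dx\to 0$ with \emph{no rate}; what is needed is that this quantity be $o(\varepsilon^2)$, which amounts to a Hardy-type bound $u^+\,|\nabla g|/g\in L_2(\{g>0\})$ that fails for general $g\in H^1(\Omega)$, $g\ge 0$. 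A one-dimensional example shows the failure is real: take $g$ equal to $x$ away from the dyadic points $x_n=2^{-n}$ but dipping linearly to a tiny positive value $\delta_n$ over intervals of width $\sim 2^{-n}$ (the widths are chosen so that $g\in H^1$), and $u(x)=x^{3/4}$; then $\{g=0\}=\{0\}$, $\tilde u(0)=0$, so the hypothesis of \eqref{tangSpace} holds, yet $\varepsilon^{-2}\int_{A_\varepsilon}|\nabla g|^2\dx\to\infty$ because $A_\varepsilon$ keeps capturing the bottoms of infinitely many dips where $\nabla g$ is large. The statement is of course still true -- some sequence in $C_y(K_\psi)$ converges to $u$ -- but your particular sequence need not, so this step is a genuine gap. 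Either switch to the duality argument (prove \eqref{normalSpace} first, then apply \eqref{bipolar} as the paper does), or be aware that a correct direct construction requires substantially more than truncation plus dominated convergence. Note also that the paper's polyhedricity proof (Theorem \ref{theorem:Kpolyhedr}) does use a $\max$-type modification, but there it starts from a sequence in $C_y(K_\psi)$ that is \emph{already known to converge} because $v\in T_y(K_\psi)=\ol{C_y(K_\psi)}$ by definition, which is precisely the fact you are trying to establish here.
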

\begin{theorem}[{cf. \cite[Th\'eor\`eme 3.2]{Mi76}}]
\label{theorem:Kpolyhedr}
	The set $K_\psi$ is polyhedric.
\end{theorem}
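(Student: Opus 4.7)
The plan is to verify the nontrivial inclusion $T_y(K_\psi) \cap [w^*]^\perp \subseteq \overline{C_y(K_\psi) \cap [w^*]^\perp}^{H^1(\Omega)}$ for arbitrary $y \in K_\psi$ and $w^* \in N_y(K_\psi)$; the reverse inclusion is immediate from $C_y(K_\psi) \subseteq T_y(K_\psi)$. Throughout I would work with quasi-continuous representatives in the sense of $H^1$-capacity, as already set up in Theorem \ref{thm:TN}.

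First I would unpack the characterizations from Theorem \ref{thm:TN}: $w^*$ corresponds to a non-negative Radon measure $\mu := \mu_{w^*} \in M_+(\overline{\Omega})$ supported on the coincidence set $F := \{\tilde y = \tilde \psi\}$; any $u \in T_y(K_\psi)$ satisfies $\tilde u \leq 0$ q.e.\ on $F$; and $u \in [w^*]^\perp$ rewrites as $\int_{\overline{\Omega}} \tilde u \, \mathrm{d}\mu = 0$. Since $\mu$ is induced by an element of $H^1(\Omega)^*$ and therefore does not charge sets of zero $H^1$-capacity, the q.e.\ inequality upgrades to $\tilde u \leq 0$ $\mu$-a.e., and combined with the vanishing integral this yields $\tilde u = 0$ $\mu$-a.e.

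For the approximation step I would truncate by the obstacle gap: set
\[
u_n := u \wedge n(\psi - y) = u - (u - n(\psi - y))^+.
\]
By construction $u_n \leq n(\psi-y)$ a.e., so $y + (1/n)\, u_n \leq \psi$ and $u_n \in C_y(K_\psi)$. On $F$ one has $\tilde\psi - \tilde y = 0$ q.e., hence $\tilde u_n = \tilde u \wedge 0 = 0$ $\mu$-a.e., which gives $\int \tilde u_n\, \mathrm{d}\mu = 0$ and hence $u_n \in [w^*]^\perp$. Pointwise $u_n \to u$ q.e.: on $\{\tilde\psi > \tilde y\}$ the gap $n(\psi - y)$ eventually dominates $u$ pointwise, while on $F$ one has $\tilde u \leq 0$ q.e.\ so already $\tilde u \wedge 0 = \tilde u$; the $L^2$-convergence then follows from the domination $0 \leq u - u_n \leq u^+$ and dominated convergence.

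The main obstacle is the $H^1$-convergence of the gradients. Stampacchia's lemma gives $\nabla(u - u_n) = \mathbf{1}_{\{u > n(\psi - y)\}}(\nabla u - n \nabla(\psi - y))$, and the factor $n\,\nabla(\psi - y)$ is not uniformly bounded in $n$. To conclude I would appeal to the capacity-theoretic approximation underlying \cite[Lemme 3.1--3.2]{Mi76}, which says that a function satisfying a q.e.\ sign condition on a quasi-closed set can be approximated in $H^1$ by functions satisfying the same sign condition on an actual (capacitary) neighborhood. Combining this approximation with the observation that the level set $\{u > n(\psi - y)\}$ is contained, modulo a set of zero capacity, in shrinking capacitary neighborhoods of $F$, and with the identity $\nabla u = n\,\nabla(\psi - y)$ a.e.\ on $\{u = n(\psi - y)\}$, one obtains the $L^2$-convergence of the gradients, and hence polyhedricity. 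This capacity-theoretic truncation step is the technical heart of the argument and is the part I would work out in detail in the appendix.
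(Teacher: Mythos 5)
Your reduction to the single inclusion $T_y(K_\psi)\cap[w]^\perp\subseteq\overline{C_y(K_\psi)\cap[w]^\perp}$, the use of Theorem \ref{thm:TN} to translate membership in $T_y(K_\psi)$ and $[w]^\perp$ into the conditions $\tilde u\le 0$ q.e.\ on the coincidence set and $\int_{\overline\Omega}\tilde u\,\mathrm d\mu_w=0$, and the verification that the truncations $u_n:=u\wedge n(\psi-y)$ lie in $C_y(K_\psi)\cap[w]^\perp$ are all correct. The proof is nevertheless incomplete, because everything hinges on $u_n\to u$ in $H^1(\Omega)$, which is exactly the step you defer. The quantity that must be controlled is $n^2\int_{A_n}|\nabla(\psi-y)|^2\dx$ with $A_n=\{u>n(\psi-y)\}$, and neither of the two facts you invoke touches it: the identity $\nabla u=n\nabla(\psi-y)$ a.e.\ on the \emph{equality} set is irrelevant there (the gradient of $(u-n(\psi-y))^+$ already vanishes on that set), while the inclusion of $A_n$ in shrinking capacitary neighbourhoods of the coincidence set controls $\mathrm{cap}(A_n)$ and $|A_n|$ but not the weighted integral, since $n^2|\nabla(\psi-y)|^2$ blows up precisely where $A_n$ concentrates. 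Even the weaker goal of weak $H^1$-convergence (which would suffice, as $C_y(K_\psi)\cap[w]^\perp$ is convex so its weak and strong closures coincide) requires a uniform $H^1$-bound on $(u-n(\psi-y))^+$ that you have not established. A completion along these lines does exist -- it is essentially the route of \cite[Ch.\ 6.4.3]{bosha} and the lemmas of \cite{Mi76} -- but that convergence lemma is the analytic content of the whole theorem and cannot be cited away as a technicality to be ``worked out in the appendix''.

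The paper's own proof inverts the difficulty so that this lemma is never needed. It starts from a sequence $v_n\in C_y(K_\psi)$ with $v_n\to v$ strongly in $H^1(\Omega)$, which exists by the very definition of $T_y(K_\psi)$ as the closure of the radial cone, and replaces it by the lattice modification $\hat v_n=\max\{v_n,v\}=v+(v_n-v)^+$; since $\|(v_n-v)^+\|_{H^1}\le\|v_n-v\|_{H^1}$, strong convergence $\hat v_n\to v$ is automatic. The remaining work consists only in checking, via the sign characterisations of Theorem \ref{thm:TN} and the decomposition $0=\langle w,v\rangle=\langle w,\hat v_n\rangle+\langle w,v-\hat v_n\rangle$ into two summands of one sign, that $\hat v_n$ remains in $C_y(K_\psi)\cap[w]^\perp$. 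In short: the paper modifies an already convergent sequence to gain membership, whereas you modify a member to gain convergence, and it is precisely that convergence which carries the analytic weight and is missing from your argument.
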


\section{Abstract sensitivity analysis}
\label{sec:abstrSense}
In this section we will derive
sensitivity estimates and relations for material derivatives
under general conditions.
We start in Section \ref{sec:sensMinEnergy}
with minimisers of certain $p$-coercive energy functionals
and deduce a H\"older-type estimate with exponent $1/p$.
We present an example which includes the quasi-linear
$p$-Laplacian $\Delta_p(\cdot)=\di(|\nabla \cdot|^{p-2}\nabla\cdot)$.
Then we proceed in Section \ref{sec:sensMonOp} with
solutions of monotone operators
where we are able to improve the estimates from Subsection \ref{sec:sensMinEnergy}.
For the case $p=2$ we even establish a Lipschitz type sensitivity estimate.
Finally in Subsection \ref{sec:varIneqMatDer}
we strengthen the assumptions in order to establish the weak material
derivative.
A crucial requirement will be the polyhedricity of the underlying set.

In this whole section $V$ will denote a Banach space,
$K\subseteq V$ a closed convex subset and $\tau>0$ a fixed constant.

\subsection{Sensitivity result for minimisers of energy functionals}
\label{sec:sensMinEnergy}
Our starting point is a family of energy functionals 
$$
	E:[0,\tau] \times V\rightarrow \R,
$$
where we denote the set of attained infima at $t\in[0,\tau]$ by 
\begin{align}
	\red U(t)\black := \big\{ u^t\in V: \inf_{\ph\in K} E(t,\ph) = E(t,u^t)  \big\}.
	\label{XSet}
\end{align}
Our aim is to establish a general result showing the 
convergence of minimisers of $E(t,\cdot)$ to minimisers 
 of $E(0,\cdot )$ as $t\searrow 0$.
Before we state our abstract sensitivity result,
we recall \cite[Theorem 1]{savare98} which will be used in a subsequent proof:
\begin{theorem}[{\cite[Theorem 1]{savare98}}]\label{thm:savare}
	Let $\red\triplenorm\black \cdot\red\triplenorm\black $ be a seminorm on $V$. Let $E:V\rightarrow \R$ be an energy functional such 
	that for all $v,w\in K$ the mapping 
	$s\mapsto\gamma(s):= E(sw+ (1-s)v))$ is $C^1$ on $[0,1]$. Let us denote by 
	$\mathcal A:K\rightarrow V^*$ the Gateaux-differential of $E$ which is supposed to 
	be $p$-coercive on $K$:
	$$
		\exists \alpha>0,\forall u,v\in K,
			\quad \langle\mathcal A(u)-\mathcal A(v), u-v\rangle_V \ge\alpha \red\triplenorm\black u-v\red\triplenorm\black ^p.
	$$
	Then every minimum $u$ of $E$ on $K$ satisfies:
	$$  \forall v\in K, \quad \frac{\alpha}{p} \red\triplenorm\black u-v\red\triplenorm\black ^p \le E(u) - E(v).$$
\end{theorem}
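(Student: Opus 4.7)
The plan is to compare $E(v)$ and $E(u)$ by integrating the Gateaux derivative along the segment from $u$ to $v$, then split the integrand into a piece handled by the minimality variational inequality and a piece handled by $p$-coercivity. (I assume, as is standard for Savaré's inequality, that the claimed bound is $\tfrac{\alpha}{p}\triplenorm u-v\triplenorm^p \le E(v)-E(u)$; the sign in the excerpt appears to be a typo, since the left-hand side is nonnegative while $E(u)\le E(v)$ by minimality.)

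First, for fixed $u,v\in K$, set $\gamma(s)=E(u+s(v-u))$, which by hypothesis is $C^1$ on $[0,1]$ with $\gamma'(s)=\langle\mathcal A(u+s(v-u)),\,v-u\rangle_V$. The fundamental theorem of calculus gives
\begin{equation*}
  E(v)-E(u)=\int_0^1 \langle\mathcal A(u+s(v-u)),\,v-u\rangle_V\,\mathrm ds.
\end{equation*}
I would next add and subtract $\langle\mathcal A(u),v-u\rangle_V$ inside the integral to isolate a monotone-type difference:
\begin{equation*}
  E(v)-E(u)=\langle\mathcal A(u),v-u\rangle_V+\int_0^1\langle\mathcal A(u+s(v-u))-\mathcal A(u),\,v-u\rangle_V\,\mathrm ds.
\end{equation*}

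Since $u$ minimises $E$ on the convex set $K$ and $u+s(v-u)\in K$ for $s\in[0,1]$, minimality combined with the $C^1$ regularity of $\gamma$ yields the variational inequality $\langle\mathcal A(u),v-u\rangle_V\ge 0$ (differentiating $s\mapsto E(u+s(v-u))$ at $s=0^+$). So the first term is nonnegative and can be dropped.

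For the integral, the key trick is to rewrite the integrand using the $p$-coercivity assumption applied to the pair $w:=u+s(v-u)$ and $u$, noting $w-u=s(v-u)$. Then
\begin{equation*}
  \langle\mathcal A(w)-\mathcal A(u),\,v-u\rangle_V
  =\tfrac{1}{s}\langle\mathcal A(w)-\mathcal A(u),\,w-u\rangle_V
  \ge\tfrac{\alpha}{s}\triplenorm w-u\triplenorm^{p}
  =\alpha s^{p-1}\triplenorm v-u\triplenorm^{p}
\end{equation*}
for $s\in(0,1]$. Integrating over $s\in[0,1]$ gives $\int_0^1 \alpha s^{p-1}\,\mathrm ds\cdot\triplenorm v-u\triplenorm^{p}=\tfrac{\alpha}{p}\triplenorm v-u\triplenorm^{p}$, which combined with the discarded nonnegative term yields the claim.

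The only subtle point is the factor of $s$: one must convert the test direction $v-u$ into $w-u=s(v-u)$ before invoking $p$-coercivity, which is what produces the exponent $s^{p-1}$ and, upon integration, the constant $1/p$. Everything else is a routine application of the FTC plus the minimisation VI; no polyhedricity or reflexivity is needed for this particular statement.
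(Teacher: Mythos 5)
Your proof is correct and complete. The paper does not actually prove this statement---it is quoted from \cite{savare98} and used as a black box---so there is no internal proof to compare against; your argument (fundamental theorem of calculus along the segment, the first-order optimality condition $\langle\mathcal A(u),v-u\rangle_V\ge 0$ on the convex set $K$ for the endpoint term, and $p$-coercivity applied to the pair $(u+s(v-u),u)$ with the homogeneity factor $s^{p-1}$ producing the constant $1/p$) is exactly the standard derivation of Savar\'e's inequality. You are also right that the displayed conclusion contains a sign typo and should read $\frac{\alpha}{p}\triplenorm u-v\triplenorm^p\le E(v)-E(u)$, since otherwise the right-hand side is nonpositive by minimality; this is the form actually needed (and implicitly used, modulo the same typo) in the proof of Theorem \ref{thm:energy_sensitivity}.
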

\subsubsection{\red H\"older-type estimate\black}
In what follows let $E$ satisfy the following assumption:
\vspace*{0.5em}\\
\textbf{Assumption (O1)}
\textit{
	Suppose that the energy functionals $E(t,\cdot)$ satisfies for a given
	$p\geq 1$:
	\begin{itemize}
		\item[(i)] 
			$\exists c_1>0,\exists c_2>0$, 
			$\forall \varphi\in K,$
			$E(\cdot,\ph)$ is differentiable and
			$$
				\forall t\in [0, \tau],\quad\partial_t E(t, \varphi) \le c_1 \|\varphi\|^p_V + c_2;
			$$
		\item[(ii)] $\exists c>0$, $\exists \Lambda>0$, 
			$\forall \varphi \in K$, $\forall t\in [0,\tau]$,
			$$
				E(t,\varphi) \ge c \|\varphi \|_V^p- \Lambda;
			$$
		\item[(iii)]
			$\forall t\in[0,\tau]$, $E(t,\cdot)$ is Gateaux-differentiable
			and
			$$
				\exists \alpha>0,\forall u,v\in K, \quad
				\langle\mathcal A_t(u)-\mathcal A_t(v), u-v\rangle_V
				\ge \alpha \red\triplenorm\black u-v\red\triplenorm\black ^p,
			$$
			where $\langle \mathcal A_t(v),w \rangle_V := dE(t,v;w)$
			and $\red\triplenorm\black \cdot\red\triplenorm\black $ is a semi-norm on $V$;
		\item[(iv)] $\forall v,w\in K$, $\forall t\in [0,\tau]$, 
			$$
				s\mapsto\gamma(s):= E(t,sv+(1-s)w)\text{ is }C^1([0,1])
			$$
	\end{itemize}
}
We are in the position to state and prove our sensitivity result:
\begin{theorem}\label{thm:energy_sensitivity}
	Let $E:[0,\tau]\times V \rightarrow \R$ be a family of energy functionals 
	satisfying Assumption (O1) and let $\red U(t)\black$ be non-empty for every $t\in[0,\tau]$.
	Then $\red U(t)\black=\{u^t\}$ is a singleton and 
 	there exists a constant $c>0$ such that for all 
	$t\in [0,\tau]$:
	$$
		\red\triplenorm\black u^t - u^0\red\triplenorm\black  \le ct^{1/p}.
	$$
\end{theorem}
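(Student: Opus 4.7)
The plan is to apply Theorem \ref{thm:savare} at both parameter values $0$ and $t$, combine the resulting inequalities, and then interpret the remaining energy differences via the fundamental theorem of calculus in the time variable.

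Concretely, since $u^t \in U(t)$ and $u^0 \in U(0)$ both lie in $K$, applying Theorem \ref{thm:savare} to $E(t,\cdot)$ with minimiser $u^t$ and test point $u^0$, and to $E(0,\cdot)$ with the roles of minimiser and test swapped (both applications are legitimate by (O1)(iii) and (O1)(iv)), yields
\[
\frac{\alpha}{p} \triplenorm u^t - u^0 \triplenorm^p \leq E(t, u^0) - E(t, u^t), \qquad
\frac{\alpha}{p} \triplenorm u^0 - u^t \triplenorm^p \leq E(0, u^t) - E(0, u^0).
\]
Summing these and regrouping the four terms as $[E(t, u^0) - E(0, u^0)] - [E(t, u^t) - E(0, u^t)]$, then rewriting each bracket as an integral over $[0,t]$ of $\partial_s E$ via (O1)(i), I obtain
\[
\frac{2\alpha}{p} \triplenorm u^t - u^0 \triplenorm^p \leq \int_0^t \bigl[\partial_s E(s, u^0) - \partial_s E(s, u^t)\bigr]\, ds.
\]
Uniqueness of the minimiser at each $t$ follows from the same theorem applied to two putative minimisers, which forces the seminorm of their difference to vanish.

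To control the right-hand side uniformly in $t$, I would first establish the a priori bound $\|u^t\|_V \leq C$ with $C$ independent of $t \in [0,\tau]$. Chaining the minimality inequality $E(t, u^t) \leq E(t, u^0)$, the upper bound (O1)(i), and the coercivity (O1)(ii) gives
\[
c\|u^t\|_V^p - \Lambda \leq E(t, u^t) \leq E(0, u^0) + \tau (c_1 \|u^0\|_V^p + c_2),
\]
from which the desired $V$-bound is immediate. Equipped with this bound and (O1)(i) applied to both $u^0$ and $u^t$, the integrand in the key estimate is uniformly bounded by some constant $C' > 0$ on $[0,\tau]$, producing $\frac{2\alpha}{p} \triplenorm u^t - u^0 \triplenorm^p \leq C' t$ and hence $\triplenorm u^t - u^0 \triplenorm \leq c\, t^{1/p}$.

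The main technical obstacle is the upper estimate on $\partial_s E(s, u^0) - \partial_s E(s, u^t)$: the bound of (O1)(i) must be applied simultaneously to both arguments, which is exactly why the uniform $V$-bound on $u^t$ is indispensable. Without it, the contribution of $-\partial_s E(s, u^t)$ to the integrand cannot be controlled in terms of the fixed reference data $u^0$.
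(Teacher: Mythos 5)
Your proof is correct and follows essentially the same route as the paper: two applications of Theorem~\ref{thm:savare} (one per parameter value), summation, regrouping into time-differences of the energy controlled via Assumption (O1)(i), all resting on the same a priori $V$-bound for $u^t$ obtained from minimality, (O1)(i) and (O1)(ii). The only cosmetic difference is that you use the fundamental theorem of calculus where the paper uses the mean value theorem, and you state the Savar\'e inequality with the (correct) sign $E(v)-E(u)\ge \frac{\alpha}{p}\triplenorm u-v\triplenorm^p$ for a minimiser $u$.
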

\begin{proof}
	Let $t\in [0,\tau]$
	and $u^t\in \red U(t)\black$. 
  Let us first show that $u^t$ is bounded in $V$ uniformly in $t$.  According to 
	Assumption (O1) (i)-(ii), the definition of $u^t$  and the mean 
	value theorem, we obtain $\eta_t \in (0,t)$ such that
	\begin{equation}\label{eq:est_ut}
		\begin{split}
			c\| u^t\|_V^p -\Lambda & \le E(t,u^t)\\
	    	& \le E(t,u^0)  \\
		    & = E(0,u^0) + t \partial_t E(\eta_t, u^0) \\
				& \le E(0,u^0) + t\big(c_1 \| u^0\|_V^p + c_2\big).
		\end{split}
 	\end{equation}
	This shows that $\| u^t\|_V \le C$ for all $t\in [0,\tau]$ for some 
	constant $C>0$. 
	Furthermore applying Theorem~\ref{thm:savare} by using Assumption (O1) (iii)-(iv) shows
	\begin{align}
		c\red\triplenorm\black u^t-u^0\red\triplenorm\black ^p & \le E(t,u^t) - E(t, u^0),\\
		c\red\triplenorm\black u^t - u^0\red\triplenorm\black ^p & \le E(0,u^0) - E(0,u^t).
	\end{align}
	Adding both inequalities, applying the mean value theorem twice with some
	$\eta_t, \zeta_t \in (0,t)$ and 
	using Assumption (O1) (i) and the estimate \eqref{eq:est_ut} yields
	\begin{equation}
		\begin{split}
		2c\red\triplenorm\black u^t-u^0\red\triplenorm\black ^p & \le E(t,u^t) - E(t, u^0) + E(0,u^0) - E(0,u^t)\\
		& \le t\big( \partial_t E(\eta_t, u^t) - \partial_t E(\zeta_t, u^0)\big) \\
		& \le t C ( \|u^t\|_V^p + \|u^0\|_V^p)\\
		& \stackrel{\eqref{eq:est_ut}}{\le} t C (1 + \|u^0\|_V^p).
		\end{split}
	\end{equation}
This finishes the proof. 	
\end{proof}

\subsubsection{\red Example: $p$-Laplace equation\black}
\hspace*{0.1em}\\
As an application of Theorem \ref{thm:energy_sensitivity} let us consider
the $p$-Laplace equation
$$
	-\di(|\nabla u|^{p-2}\nabla u)=f\quad\text{in }K=V=\ac W_p^1(\Omega)
$$
on a bounded Lipschitz domain $\Omega$
and the associated energy given by
$$
	E(0,\ph)=
		\frac{1}{p}\int_\Omega |\nabla \varphi|^p\dx - \int_\Omega f \varphi \dx,
		\quad\varphi \in \ac W_p^{1}(\Omega).
$$
\red 
Here $\ac W_p^{1}(\Omega)$ denotes the closure of $C^\infty_c(\Omega)$ in the $\|\cdot \|_{W^1_p}$-norm.
The energy of the perturbed
problem transported to $\Omega$ is of the form\black
\begin{align*}
	E(t, \varphi) = \frac{1}{p}\int_\Omega \xi(t)|B(t)\nabla \varphi|^p - f(t) \varphi \dx.
\end{align*}
\red
More precisely this type of energy arises if one considers the energy on a
perturbed domain $\Phi_t(\Omega)$
and apply a change of variables, i.e.
\begin{align*}
	 \tilde E(t, \varphi) = \frac{1}{p}\int_{\Phi_t(\Omega)} |\nabla \varphi|^p-f\varphi \dx
		=\frac{1}{p}\int_{\Omega} \det(\partial\Phi_t)|(\partial\Phi_t)^{-\top}\nabla \tilde \varphi|^p-f\det(\partial\Phi_t)\tilde \varphi \dx,
\end{align*}
where $\tilde \varphi = \varphi \circ \Phi_t$. Now 
the minimisation of 
$\tilde E(t,\cdot)$ over $\ac W^1_p(\Phi_t(\Omega))$ is equivalent to the minimisation of 
$E(t,\varphi):=\tilde E(t,\varphi \circ\Phi^{-1})$ over  $ \ac W^1_p(\Omega)$.
\black

More generally we assume that $\xi:[0,\tau] \to \R$ and 
$B:[0,\tau] \to \R^{d\times d}$ are $C^1$-functions which satisfy $\xi(0) =1$ and $B(0) =I$. 
Moreover let $f(0)=f$ and $f(\cdot,x)$ be differentiable
and $f'(t)\in L_{p'}(\Omega)$ be uniformly bounded where $p'=p/(p-1)$ denotes 
the conjugate of $p$.
We check that the assumptions in (E) are satisfied:

Indeed, we have
$$
\partial_t E(t,\varphi) = \int_\Omega \xi'(t) \frac{1}{p}|B(t)\nabla \varphi|^p 
+  \xi(t) |B(t)\nabla \varphi|^{p-2} B(t)\nabla \varphi\cdot B'(t) \nabla \varphi \dx - \int_\Omega  f'(t) \varphi 
\dx.
$$
Thus applying H\"older and Young's inequalities we verify Assumption (O1) (i):
\begin{align*}
	\partial_t E(t, \varphi )
	\le{}& \int_\Omega \xi'(t) \frac{1}{p}|B(t)\nabla \varphi|^p +  \xi(t) |B(t)\nabla \varphi|^{p-2} B(t) \nabla \varphi \cdot B'(t)\nabla \varphi \dx  - \int_\Omega  f'(t) \varphi \dx \\
	\le{}&  c \int_\Omega |\nabla \varphi|^p + |f'(t)|| \varphi |\dx \\
				   \le{}& c   \|\nabla \varphi\|^p_{L_p} + 1/p'\|f'(t)\|_{L_{p'}}^{p'}
				   +\frac{1}{p}\|\varphi\|_{L_p}^p.  
\end{align*}
On the other hand using Young's and Poincar\'e's inequality
with small $\varepsilon>0$
\begin{align*}
	E(t,\varphi )
		\ge{}& c \|\nabla \varphi \|^p_{L_p}
			- 1/p'  (p\varepsilon)^{-\frac{1}{p-1}}\|f(t) \|_{L_{p/(p-1)}}^{p'}
			- \varepsilon\|\varphi\|_{L_p}^p\\
		\ge{}& c_1\|\varphi \|^p_{W_p^1}
			- c_2	- \varepsilon\|\varphi\|_{L_p}^p .
\end{align*}
Thus we have verified Assumption (O1) (ii).
Assumption (O1) (iii) follows from uniform $p$-monotonicity of $-\Delta_p(\cdot)$
and Assumption (O1) (iv) by direct calculations.

Finally we may use
Theorem \ref{thm:energy_sensitivity} and obtain $\| u^t - u\|_{W^1_p(\Omega)} \le 
ct^{1/p}$ for some constant $c>0$ and all sufficiently small $t>0$. In the case
of the usual Laplace equation, that is for $p=2$, we get
$\| u^t - u\|_{H^1(\Omega)} \le ct^{1/2}$.

\subsection{Sensitivity result for uniformly monotone operators}
\label{sec:sensMonOp}
In this section we develop sensitivity results for variational inequalites 
involving uniformly monotone operators.
Let $V$ be a normed space, $V^*$ its dual space and $K\subseteq V$ be a closed convex subset.

\subsubsection{\red Enhanced H\"older-type estimate\black}
\red The following assumptions are used in this section: \black

\noindent
\textbf{Assumption (O2)}
\textit{
	Suppose that  $ (\mathcal A_t):K \rightarrow V^*$, $t\in[0,\tau]$
	is a family of operators such that for a given $p\geq 1$: 
	\begin{itemize}	
		\item[(i)]  $\exists \alpha>0 $, $\forall t\in [0,\tau]$, $\forall u,v\in K$:
			$$  \alpha \| u-v \|_V^p \le \langle \mathcal A_t(u) - \mathcal A_t(v), u-v\rangle_V;   $$
		\item[(ii)] $\forall u\in K$, $\exists c>0 $, $\forall t\in [0,\tau]$, $\forall v\in K$, 
			$$	|\langle \mathcal A_t(u) -  \mathcal A_0(u), u - v \rangle_V| \le c t \|u-v\|_V. 	$$
	\end{itemize}
}

\begin{theorem}\label{thm:sens_At}
	Suppose that 
	$(\mathcal A_t): K \rightarrow V^*$ is a family of operators satisfying Assumption (O2). 
	For every $t >0$ we denote by $u^t\in K$ a solution of the 
	variational inequality
	\begin{equation}\label{eq:ut_abstract}
		u^t\in K\text{ and } \forall v\in K,\,\langle \mathcal A_t(u^t) , v - u^t \rangle_V \ge 0.
	\end{equation}
	Then there exists a $c>0$ such that
	$$ \forall t\in[0,\tau]:\quad  \| u^t - u^0\|_V \le c t^{\frac{1}{p-1}}.  $$
\end{theorem}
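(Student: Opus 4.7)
The plan is to test each variational inequality with the solution of the other and subtract, in order to cancel out the cross terms and isolate a monotonicity estimate bounded by the perturbation of the operator.

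More concretely, I would start by writing the two inequalities: using the feasibility of $u^0$ as a test function in \eqref{eq:ut_abstract} yields $\langle \mathcal A_t(u^t), u^t - u^0\rangle_V \le 0$, and using $u^t$ in the corresponding VI for $t=0$ yields $\langle \mathcal A_0(u^0), u^t - u^0\rangle_V \ge 0$. Subtracting these two inequalities gives
\begin{equation*}
  \langle \mathcal A_t(u^t) - \mathcal A_0(u^0), u^t - u^0\rangle_V \le 0.
\end{equation*}
The standard next step is to insert the intermediate term $\mathcal A_t(u^0)$, splitting the left-hand side into
\begin{equation*}
  \langle \mathcal A_t(u^t) - \mathcal A_t(u^0), u^t - u^0\rangle_V + \langle \mathcal A_t(u^0) - \mathcal A_0(u^0), u^t - u^0\rangle_V \le 0.
\end{equation*}

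Then I would apply Assumption (O2)(i) to bound the first term from below by $\alpha\|u^t - u^0\|_V^p$, and Assumption (O2)(ii), taken with $u = u^0$ and $v = u^t$, to bound the second term from below by $-c\,t\,\|u^t - u^0\|_V$. Rearranging gives
\begin{equation*}
  \alpha\|u^t - u^0\|_V^p \le c\,t\,\|u^t - u^0\|_V.
\end{equation*}
If $u^t = u^0$ the claim is trivial; otherwise I can divide by $\|u^t - u^0\|_V$ to obtain $\|u^t - u^0\|_V^{p-1} \le (c/\alpha)\,t$, which yields the desired estimate with the exponent $1/(p-1)$.

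The proof is essentially just the classical monotonicity argument, so I do not anticipate a real obstacle; the only subtle point is that Assumption (O2)(ii) must be applicable at the specific pair $(u,v) = (u^0, u^t)$ uniformly in $t$, which is exactly what (ii) provides since the constant $c$ there depends on $u = u^0$ but not on $v$ or $t$. Note that the improvement over Theorem~\ref{thm:energy_sensitivity} (exponent $1/(p-1)$ instead of $1/p$) comes precisely from the fact that Assumption (O2)(i) gives monotonicity in the full $V$-norm rather than only in a seminorm, together with the linear-in-$t$ perturbation bound in Assumption (O2)(ii); in the Hilbert case $p=2$ this recovers the familiar Lipschitz rate $\|u^t-u^0\|_V \le c\,t$.
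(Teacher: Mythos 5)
Your proof is correct and follows essentially the same route as the paper: both arguments combine the two variational inequalities with the uniform monotonicity (O2)(i) and the perturbation bound (O2)(ii) at the pair $(u^0,u^t)$, inserting $\mathcal A_t(u^0)$ as the intermediate term, and then divide by $\|u^t-u^0\|_V$. The paper merely arranges the same cancellations in a single chain of inequalities starting from the monotonicity term, so there is nothing substantive to distinguish the two.
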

\begin{proof}
	Taking into account Assumption (O2) and \eqref{eq:ut_abstract}:
	\begin{align*}
   	\alpha \| u^t-u^0 \|_V^p & \le \langle \mathcal A_t(u^t) -  \mathcal A_t(u^0), u^t - u^0\rangle_V\\ 
    & \le  - \langle \mathcal A_t(u^0), u^t - u^0\rangle_V\\
		& = \langle \mathcal A_0(u^0), u^t - u^0\rangle_V
			+ \langle \mathcal A_0(u^0) - \mathcal A_t(u^0), u^t - u^0\rangle_V\\
		& \le |\langle \mathcal A_0(u^0) - \mathcal A_t(u^0), u^t - u^0\rangle_V|\\
		& \le ct\|u^t - u^0\|_V. 
	\end{align*}
\end{proof}
\begin{remark}
	In the important case $p=2$ Theorem \ref{thm:sens_At} yields a Lipschitz type estimates.
\end{remark}

\subsubsection{\red Example: $p$-Laplace equation\black}
It can be checked that the $p$-Laplace example from Subsection \ref{sec:sensMinEnergy}
where $\C A_t$ is given by
$$
	\langle\C A_t(u),\ph\rangle_{\ac W_p^1}
	=\int_\Omega \xi(t)|B(t)\nabla u|^{p-2}B(t)\nabla u\cdot B(t)\nabla\ph-f(t)\ph\dx
$$
also fulfills Assumption (O2).
Thus in this case Theorem \ref{thm:sens_At} gives a sharper estimate
than Theorem \ref{thm:energy_sensitivity}.

\subsection{Variational inequality for the material derivative}
\label{sec:varIneqMatDer}
In the previous section we have shown that under certain conditions on
$(\mathcal A_t)$ satisfied for $p=2$ the quotient $(u^t-u^0)/t$ stays bounded.
In this subsection we additionally assume that
$V$ is reflexive and
that $K\subseteq V$ is a polyhedric subset.
Then there will be a weakly converging subsequence of $(u^t-u^0)/t$
converging to some $z\in V$. If this $z$ is unique the whole sequence
converges and additionally satisfies some limiting equation
which is the subject of this subsection.

Let $(\C A_t)$ be as in Subsection \ref{sec:sensMonOp} and
define in accordance with \eqref{XSet} for all $t\in [0,\tau]$ the solution set
of the associated variational inequality as
\begin{equation}\label{eq:states}
	\red U(t)\black:= \big\{ u^t\in K: \; \forall \varphi \in K,\,\langle \mathcal A_t(u^t), \varphi - u^t \rangle 
		\ge 0\big\}.
\end{equation}
We will write $u:=u^0$ and $\C A:=\C A_0$.

The variational inequality for the material derivative will be deduced
from the following assumptions:
\vspace*{0.5em}\\
\noindent\textbf{Assumption (O3)}
\textit{
	Suppose that the family $(\C A_t)$ satisfies
	\begin{itemize}
		\item[(i)] for all $v,w\in V$ and all $u\in K$,
			$$ \langle \partial \mathcal A(u)w,v\rangle_V :=  \lim_{t\searrow 0} \left\langle \frac{\mathcal A(u+tw) - \mathcal A(u)}{t}, v\right\rangle_V  $$
		        and 
			$$ \langle \mathcal A'(u),v\rangle_V := \lim_{t\searrow 0} \left \langle \frac{\mathcal A_t(u) - \mathcal A(u)}{t}, v \right \rangle_V  $$		exist;
		\item[(ii)]  for all null-sequences $(t_n)$,
			for all sequences  $(v_n)$ in $V$ converging weakly to some $v\in V$,
			for all $u^{t_n} \in \red U(t_n)\black$ converging strongly to some $u\in K$, we have 
			$$
				\langle \mathcal A'(u), 	v \rangle_V
				 =  \lim_{n \rightarrow 0 } \left \langle \frac{\mathcal A_{t_n}(u^{t_n}) - \mathcal A(u^{t_n})}{t_n},  v_n \right\rangle_V;
	    $$
    \item[(iii)]
			\red
			there exists a null-sequence $(t_n)$
			\black
			such that 
			$u^{t_n}\in \red U(t_n)\black$ converges strongly to $u\in K$ and 
			$(u_n-u)/t_n$ converges weakly to some $z\in V$ and
			$$ \langle \partial\mathcal A(u) z, z \rangle_V
				\le \liminf_{n \rightarrow 0 } \left \langle \frac{\mathcal A(u^{t_n}) - \mathcal A(u)}{t_n}, \frac{u^{t_n}-u}{t_n} \right\rangle_V
			$$ 
			and for all $(v_n)$ in $V$ converging strongly to $v\in V$:
      $$
      	\langle \partial\mathcal A(u) z, v \rangle_V  = \lim_{n \rightarrow 0 } \left \langle \frac{\mathcal A(u^{t_n}) - \mathcal A(u)}{t_n},v_n \right\rangle_V.
			$$
	\end{itemize}
}

\begin{theorem}\label{thm:mat_abstract}
	Let $V$ be a reflexive Banach space and $K\subseteq V$ a polyhedric subset.
	Suppose that $\mathcal A_t:K \rightarrow V^*$, $t\in[0,\tau]$ is a family of operators 
	satisfying Assumption (O2) for $p=2$ and (O3). Suppose that $u^t\in \red U(t)\black$, i.e., $u^t$ solves
	\begin{align}
		u^t\in K, \quad \langle \mathcal A_t(u^t), \varphi - u^t \rangle_V \ge 0 \quad \forall \varphi \in K.
		\label{AtVIAss}
	\end{align}
	Then the material derivative $\dot u:=\mathrm{weak-lim}_{t\searrow 0}(u^t-u)/t$ exists and solves
	\begin{subequations}
	\begin{align}
			&\dot u\in T_u(K)\cap \text{kern}(\mathcal A(u))\text{ and }
				\label{dotUInc}\\
			&\forall \varphi\in T_u(K)\cap \text{kern}(\mathcal A(u)):\;\langle \partial \mathcal A(u)\dot u, \varphi - \dot u \rangle_V
			\ge - \langle \mathcal A'(u), \varphi - \dot u\rangle_V.
				\label{AtVI}
	\end{align}
	\end{subequations}
\end{theorem}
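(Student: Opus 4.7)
The plan is to follow the standard ``extract subsequence, identify weak limit, derive limiting VI'' scheme, leaning on the monotonicity in (O2) and the liminf/limit structure in (O3). First, Theorem \ref{thm:sens_At} applied with $p=2$ gives the Lipschitz estimate $\|u^t-u\|_V\le ct$, so the difference quotients $(u^t-u)/t$ are norm-bounded; by reflexivity of $V$ and (O3)(iii), along the selected null-sequence $(t_n)$ one has $(u^{t_n}-u)/t_n\weaklim z$ in $V$ while $u^{t_n}\to u$ strongly. To see $z\in T_u(K)$, observe that $u+t\cdot(u^t-u)/t=u^t\in K$ puts $(u^t-u)/t$ in $C_u(K)\subseteq T_u(K)$; since $T_u(K)$ is convex and strongly closed, Mazur's lemma keeps the weak limit $z$ inside it. For $z\in\text{kern}(\mathcal A(u))$ I test the VI for $u$ with $\varphi=u^t$, obtaining $\langle\mathcal A(u),(u^t-u)/t\rangle\ge 0$, and the VI for $u^t$ with $\varphi=u$, giving $\langle\mathcal A_t(u^t),(u^t-u)/t\rangle\le 0$; decomposing $\mathcal A_t(u^t)=\mathcal A(u)+[\mathcal A_t(u^t)-\mathcal A(u^t)]+[\mathcal A(u^t)-\mathcal A(u)]$, the two correction brackets paired with the bounded $(u^t-u)/t$ each contribute $O(t)$ (use (O3)(ii) on the first and the liminf clause of (O3)(iii) together with monotonicity on the second), so passing to the limit in both inequalities pinches $\langle\mathcal A(u),z\rangle$ to zero.

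For the VI itself, fix $\varphi\in C_u(K)\cap\text{kern}(\mathcal A(u))$; by convexity $u+t\varphi\in K$ for sufficiently small $t>0$. Testing \eqref{AtVIAss} with this admissible element and dividing by $t$ yields $\langle\mathcal A_t(u^t),\varphi-(u^t-u)/t\rangle\ge 0$. Since $\langle\mathcal A(u),\varphi\rangle=0$ and $\langle\mathcal A(u),(u^t-u)/t\rangle\ge 0$, the quantity $\langle\mathcal A(u),\varphi-(u^t-u)/t\rangle$ is non-positive, so subtracting it and dividing once more by $t$ produces the sharpened inequality
\[
\left\langle\frac{\mathcal A_t(u^t)-\mathcal A(u)}{t},\,\varphi-\frac{u^t-u}{t}\right\rangle\ge 0.
\]
I now split $\mathcal A_t(u^t)-\mathcal A(u)=[\mathcal A_t(u^t)-\mathcal A(u^t)]+[\mathcal A(u^t)-\mathcal A(u)]$ and pass to the limit along $(t_n)$. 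By (O3)(ii) with $v_n:=\varphi-(u^{t_n}-u)/t_n\weaklim\varphi-z$, the first bracket converges to $\langle\mathcal A'(u),\varphi-z\rangle$. For the second bracket, (O3)(iii) applied to the strongly convergent constant test $v_n\equiv\varphi$ yields $\lim\langle(\mathcal A(u^{t_n})-\mathcal A(u))/t_n,\varphi\rangle=\langle\partial\mathcal A(u)z,\varphi\rangle$, while the liminf clause of (O3)(iii) gives $\limsup\bigl(-\langle(\mathcal A(u^{t_n})-\mathcal A(u))/t_n,(u^{t_n}-u)/t_n\rangle\bigr)\le-\langle\partial\mathcal A(u)z,z\rangle$. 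Combining, $\limsup$ of the second-bracket contribution is at most $\langle\partial\mathcal A(u)z,\varphi-z\rangle$, and since the whole LHS is nonnegative, I conclude $\langle\partial\mathcal A(u)z,\varphi-z\rangle+\langle\mathcal A'(u),\varphi-z\rangle\ge 0$, which is precisely \eqref{AtVI}.

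It remains to extend the VI to all $\varphi\in T_u(K)\cap\text{kern}(\mathcal A(u))$ and to secure uniqueness of $z$. For the extension I invoke polyhedricity of $K$: since $-\mathcal A(u)\in N_u(K)$ by the VI for $u$, Lemma \ref{lemma:polyhedr}(i) identifies $T_u(K)\cap\text{kern}(\mathcal A(u))$ with the $V$-closure of $C_u(K)\cap\text{kern}(\mathcal A(u))$, and both $\partial\mathcal A(u)z$ and $\mathcal A'(u)$ lie in $V^*$, making the inequality stable under strong approximation of the test function. Uniqueness of $z$ follows from the coercivity $\alpha\|w\|_V^2\le\langle\partial\mathcal A(u)w,w\rangle_V$ (obtained by setting $v=u+sw$ in (O2)(i) with $p=2$ and sending $s\searrow 0$), which applied to the difference of two hypothetical solutions of \eqref{AtVI} forces equality; a standard subsequence-of-subsequence argument then promotes the subsequential weak limit to a full one, so $\dot u=z$ is well defined. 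The main obstacle I foresee is the sign bookkeeping in the final passage to the limit: (O3)(iii) only furnishes a \emph{liminf} lower bound on the diagonal pairing $\langle(\mathcal A(u^{t_n})-\mathcal A(u))/t_n,(u^{t_n}-u)/t_n\rangle$, and it is crucial that this term enters the sharpened inequality with a minus sign so the lower bound flips into the upper bound needed to close the argument.
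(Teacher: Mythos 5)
Your argument is correct and follows essentially the same route as the paper: the Lipschitz estimate from Theorem \ref{thm:sens_At} to bound the quotients, Mazur's lemma for $\dot u\in T_u(K)$, cross-testing the two variational inequalities plus the splitting $\mathcal A_t(u^t)-\mathcal A(u)=[\mathcal A_t(u^t)-\mathcal A(u^t)]+[\mathcal A(u^t)-\mathcal A(u)]$ to pinch $\langle\mathcal A(u),\dot u\rangle=0$, the test function $u+t\varphi$ with $\varphi\in C_u(K)\cap\text{kern}(\mathcal A(u))$ divided by $t^2$, the $\limsup$ passage via (O3), and finally density together with Lemma \ref{lemma:polyhedr}~(i). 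The one place you go beyond the paper --- the uniqueness/whole-sequence step --- is also the one shaky spot: deriving $\alpha\|w\|_V^2\le\langle\partial\mathcal A(u)w,w\rangle_V$ from (O2)~(i) by setting $v=u+sw$ requires $u+sw\in K$, which holds for $w\in C_u(K)$ but not for a general difference $z_1-z_2$ of two elements of $T_u(K)\cap\text{kern}(\mathcal A(u))$, and monotonicity or linearity of $w\mapsto\partial\mathcal A(u)w$ is not among the stated hypotheses; the paper itself works only with a subsequence here and defers uniqueness to the concrete setting (Theorem \ref{thm:weakMatDer}), so this caveat does not affect the core of your proof.
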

\begin{proof}
Let us firstly show \eqref{dotUInc}.
	We get by \eqref{AtVIAss}
  \begin{align}
  	&\forall \varphi \in \Kc(\Omega):\quad\langle\C A_t(u^t),\varphi - u^t\rangle\ge 0,
  		\label{eq:dEt}\\
		&\forall \varphi \in \Kc(\Omega):\quad\langle\C A(u),\varphi - u\rangle  \ge 0.
			\label{eq:dE0}
	\end{align}
	Thus testing \eqref{eq:dEt} with $u$ and \eqref{eq:dE0} with $u^t$ and 
	dividing by $t>0$, we obtain by setting $z^t:=(u^t - u)/t$
	\begin{equation}\label{eq:dE_dEt}
		\begin{split}
			\langle\C A_t(u^t),z^t\rangle\le 0, \qquad 
			\langle\C A(u),z^t\rangle\ge 0. 
   	\end{split}
	\end{equation}
	By invoking Theorem \ref{thm:sens_At} with $p=2$ we know that
	$u^t\to u$ strongly in $V$ and that
	$z^t$ is bounded in $V$ which allows us to choose a weakly convergence
	subsequence with limit $\dot u\in V$. We find (by omitting the subscript)
	\begin{align*}
		&\langle\C A_t(u^t),z^t\rangle
			-\langle\C A(u),\dot u\rangle\\
		&=\underbrace{\langle\C A_t(u^t)-\C A(u^t),z^t\rangle}_{\to0\text{ by Assumption (O3) (ii)}}
		+\underbrace{\Big\langle\frac{\C A(u^t)-\C A(u)}{t},u^t-u\Big\rangle}_{\to0\text{ by Assumption (O3) (iii)}}
		+\underbrace{\langle\C A(u),z^t-\dot u\rangle}_{\to 0}
	\end{align*}
	Therefore passing to the limit in \eqref{eq:dE_dEt}
	gives
	$ 0 \le \langle\C A(u),\dot u\rangle \le 0$ and thus $\dot u\in \text{kern}(\mathcal A(u))$.
	Furthermore we know by the definition of the radial cone that $z^t\in C_u(K)$.
	Taking the weak convergence $z^t\weaklim \dot u$ in $V$ and Mazur's Lemma into account we find $\dot u\in T_u(K)$.
	Thus \eqref{dotUInc} is proven.

Now we will show \eqref{AtVI} by using \eqref{AtVIAss} and obtain for every $\varphi \in V$:
\begin{equation}\label{eq:limit_abstract}
	\begin{split}
	\langle \mathcal A(u^t) - \mathcal A(u), \varphi - u^t \rangle  ={}&  
	\langle \mathcal A(u^t) - \mathcal A_t(u^t), \varphi - u^t \rangle
		+ \langle \mathcal A_t(u^t) - \mathcal A(u), \varphi - u^t \rangle\\
	    \ge{}& \langle \mathcal A(u^t) - \mathcal A_t(u^t), \varphi - u^t 
	    \rangle - \langle\mathcal A(u), \varphi - u^t \rangle. 
        \end{split}
\end{equation}
By definition of the radial cone $C_u(K)$ (see \eqref{clarke_cone})
we find for every $\varphi \in C_u(K)$ 
a $t^*>0$ such that for all $t\in [0,t^*]$: $u + t \varphi \in K$.
Plugging this test-function into \eqref{eq:limit_abstract} we obtain
for all $\varphi \in C_u(K)$
\begin{equation}\label{eq:limit_abstract-1}
	\begin{split}
		\langle \mathcal A(u^t) - \mathcal A(u), t\varphi - (u^t-u) \rangle
			\ge \langle \mathcal A(u^t) - \mathcal A_t(u^t), t\varphi - (u^t-u) \rangle
				- \langle\mathcal A(u), t\varphi - (u^t-u) \rangle.
   \end{split}
\end{equation}
Dividing the previous equation by $t^2$ and setting $z^t := (u^t-u)/t$,
we obtain 
\begin{equation}\label{eq:limit_abstract-2}
	\begin{split}
		\left\langle \frac{\mathcal A(u^t) - \mathcal A(u)}{t}, 
		\varphi - z^t \right\rangle  \ge - \left\langle 
		\frac{\mathcal A_t(u^t) - \mathcal A(u^t)}{t}, \varphi - z^t 
		\right\rangle - \frac{1}{t}\langle\mathcal A(u), \varphi - z^t \rangle. 
        \end{split}
\end{equation}
Now let $\varphi\in C_u(K)\cap kern(\C A(u))$.
Then because of $\langle\C A(u),\ph\rangle=0$ and the
definition of $u\in \red U(0)\black$ (testing the relation in \eqref{eq:states} with $u^t$),
we find
$$
	- \langle\mathcal A(u), \varphi - z^t \rangle \ge 0.
$$
Thus \eqref{eq:limit_abstract-2} reads
\begin{equation}\label{eq:limit_abstract-3}
	\begin{split}
		\left\langle \frac{\mathcal A(u^t) - \mathcal A(u)}{t}, 
		\varphi - z^t \right\rangle  \ge - \left\langle 
		\frac{\mathcal A_t(u^t) - \mathcal A(u^t)}{t}, \varphi - z^t 
		\right\rangle. 
  \end{split}
\end{equation}
Using Assumption (O3) we may take the lim$\,$sup on both sides to obtain
(note that $-\limsup(...)=\liminf-(...)$)
$$
	\langle \partial \mathcal A (u)z, \varphi - z \rangle \ge - \langle \mathcal 
	A'(u), \varphi - z\rangle \quad \forall \varphi\in C_u(K)\cap 
	kern(\mathcal A(u)).
$$
Via density arguments we obtain the inequality
for all $\varphi\in \ol{C_u(K)\cap kern(\mathcal A(u))}$.
Finally using polyhedricity of $K$ and Lemma \ref{lemma:polyhedr} (i)
finish the proof.
\end{proof}

\section{A semilinear dynamic obstacle problem}
\label{sec:semilinProb}
In this section we are going to apply the theorems from Section \ref{sec:abstrSense}
to generalised obstacle problems with convex energies.
\red We \black present a generalised obstacle problem. It also covers
previous results from \cite{sokozol92} where the zero obstacle case has been
treated as a special case.
A non-trivial example from continuum damage mechanics is presented afterward
in Section \ref{sec:damage}.

\subsection{\red State equation}
Let $D\subseteq \R^d$ be an open and bounded subset.
We consider a convex energy of the following type
\begin{equation}\label{eq:energy}
	E(\Omega, \varphi) := \int_\Omega \frac{1}{2}|\nabla \varphi|^2 + 
	\frac{\lambda}{2}|\varphi|^2 + W_{\Omega}(x,\varphi)\dx,\quad
	\varphi\in H^1(\Omega),
\end{equation}
where $\Omega\subseteq D $ is a bounded Lipschitz domain and
$\lambda>0$.
The energy is minimised over the convex set
\begin{equation*}
	\Kc_{\psi_\Omega}(\Omega) :=\big\{ \varphi\in H^1(\Omega):\, \varphi \le \psi_\Omega \text{ a.e. in } \Omega\big\}.
\end{equation*}
A particularity of this setting is that, besides the density function $W_\Omega$, 
also the obstacle
function $\psi_\Omega$ is allowed to depend on the shape variable $\Omega$
(the precise assumptions are stated below in Assumption (A1)):
\begin{align*}
	&\textit{dynamic density function:}
		&&\Omega\mapsto W_\Omega\quad\\
	&\textit{dynamic obstacle:}
		&&\Omega\mapsto \psi_\Omega\in H^1(\Omega)
\end{align*}
In the special case $\psi_\Omega\equiv0$ we write
$\Kc(\Omega) := \Kc_0(\Omega)$. 
\begin{remark}
\label{rmk:setting}
\begin{itemize}
	\item[(i)]
		An important class which is covered by our setting
		are \textit{static obstacle problems}
		where $\psi_\Omega:=\Psi|_\Omega$
		with a given function $\Psi\in H^2(D)$.
	\item[(ii)]
		The energy $E(\Omega,\cdot)$ is motivated 
		by time-discretised parabolic problems,
		where an additional $\lambda$-convex non-linearity may be included in $E$.
		By choosing a small time step size, the
		incremental minimisation problem may take the form \eqref{eq:energy}.

		\red In the context of \black time-discretised damage models
		in Section \ref{sec:damage}
		we are faced with iterative obstacle problems.
		In this case the obstacle $\psi_\Omega$
		itself is a solution of a variational inequality
		describing the damage profile from the previous time step.
		As we will see it suffices to have $H^1(\Omega)$-regularity
		of the damage profile
		provided that the material derivative of the obstacle
		exists in $H^1(\Omega)$ and the initial value is in $H^2(\Omega)$.
		We will present this application in the last section.
\end{itemize}
\end{remark}
For later use we recall that the Sobolev exponent $2^*$ depending
on the spatial dimension $d$
to the space $H^1(\Omega)$ is defined as
\begin{align}
	&2^*:=
	\begin{cases}
		\frac{2d}{d-2}
		&\text{if }d>2,\\
		\textit{arbitrary in }[1,+\infty)
		&\text{if }d=2,\\
		+\infty
		&\text{if }d=1.
	\end{cases}
\end{align}
Its conjugate $(2^*)'$ is given by $\frac{2^*}{2^*-1}$ with the
convention that $(2^*)':=1$ for $2^*=+\infty$.
For well-posedness of the state system we require the following assumptions
(note that we restrict ourselves to the convex case which will be exploited
in the next sections):
\vspace*{0.5em}\\
\textbf{Assumption (A1)}
\textit{
	For all Lipschitz domains $\Omega\subseteq D$ it holds:
	\begin{itemize}
		\item[(i)]
   		$W_{\Omega}(x,\cdot)$ is convex and in $C^1(\R)$
   		for all $x\in\Omega$;
		\item[(ii)]
			the following map $H^1(\Omega)\to\R$ is assumed to be continuous
			(in particular the integral exists)
			$$
				y\mapsto \int_\Omega W_{\Omega}(x,y(x))\dx
			$$
			and bounded from below by
			$$
 				\int_\Omega W_{\Omega}(x,y(x))\dx\geq -c(\|y\|_{H^1}+1);
 			$$
    \item[(iii)]
    	for all $y,\ph\in H^1(\Omega)$:
			\begin{align*}
    		\int_\Omega\frac{ W_{\Omega}(x,y+t\varphi)- W_{\Omega}(x,y)}{t}\dx
 		   		\to{}& \int_\Omega\partial_y W_{\Omega}(x,y)\ph\dx
    		&&\text{ as }t\searrow 0
			\end{align*}
			(in particular the integral on the right-hand side exists);
    \item[(iv)]
    	$\psi_\Omega\in H^1(\Omega)$.
	\end{itemize}
}
\begin{remark}
\label{rmk:A1}
			Assumption (A1) (iii) and the continuity property from (A1) (ii) are satisfied if, e.g.,
			the following growth condition holds:
			There exist constants $\epsilon,C>0$ and functions
			$s\in L_1(\Omega)$ and $r\in L_{(2^*)'}(\Omega)$
			such that for all $x\in \Omega$ and $y\in \R$:
			\begin{align*}
				|W_{\Omega}(x,y)|\leq{}& C|y|^{2^*-\epsilon}+s(x),\\
				|\partial_y  W_{\Omega}(x,y)|\leq{}& C|y|^{2^*-1}+r(x).
			\end{align*}
\end{remark}
The assumptions in (A1)
in combination with the direct method in the calculus of variations
imply unique solvability of the
variational inequality fulfilled by the minimisers of $E(\Omega,\cdot)$.
\begin{lemma}
Under Assumption (A1) the energy \eqref{eq:energy} admits for each 
Lipschitz domain $\Omega \subseteq D$ a unique minimum $u$ (depending on $\Omega$)
on $\Kc_\psi(\Omega)$ which 
is given as the unique solution of
\begin{equation} \label{eq:VI_weak}
	\left\{
	\begin{aligned}
		&u\in \Kc_{\psi_\Omega}(\Omega)\text{ and }\forall \ph\in \Kc_{\psi_\Omega}(\Omega):\\
		&\int_\Omega \nabla u \cdot \nabla (\varphi - u)  + \lambda u (\varphi- u)
			+ w_{\Omega}(x,u)(\varphi-u)\dx\ge 0,
	\end{aligned}
	\right.
\end{equation}
where
$$
	w_{\Omega}(x,y) := \partial_y W_{\Omega}(x,y).
$$
\end{lemma}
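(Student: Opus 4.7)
The plan is to apply the direct method of the calculus of variations to obtain existence of a minimiser, deduce uniqueness from strict convexity, and then characterise the minimiser by the first-order optimality condition which yields \eqref{eq:VI_weak}.

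First I would check that $\Kc_{\psi_\Omega}(\Omega)$ is non-empty (for instance $\psi_\Omega \in \Kc_{\psi_\Omega}(\Omega)$ by (A1)(iv)), convex, and closed in $H^1(\Omega)$ (the constraint $\varphi \le \psi_\Omega$ a.e.\ passes to the limit along strongly $L^2$-convergent, hence along $H^1$-weakly convergent, subsequences). Coercivity of $E(\Omega,\cdot)$ on $H^1(\Omega)$ follows from the lower bound in (A1)(ii): combining $-c(\|\varphi\|_{H^1}+1)$ with the quadratic term $\tfrac12\|\nabla\varphi\|_{L^2}^2+\tfrac{\lambda}{2}\|\varphi\|_{L^2}^2$ and applying Young's inequality produces $E(\Omega,\varphi)\ge c_1\|\varphi\|_{H^1}^2-c_2$. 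Weak lower semicontinuity of the gradient and $L^2$ terms is classical; for $\varphi\mapsto\int_\Omega W_\Omega(x,\varphi)\dx$ convexity in $\varphi$ from (A1)(i) together with strong continuity on $H^1(\Omega)$ from (A1)(ii) yields weak lower semicontinuity via Mazur's lemma. A minimising sequence then admits an $H^1$-weakly convergent subsequence whose limit lies in $\Kc_{\psi_\Omega}(\Omega)$ and attains the infimum. Uniqueness is immediate from strict convexity of $E(\Omega,\cdot)$: the quadratic part is strictly convex on $H^1(\Omega)$ since $\lambda>0$, and $\int_\Omega W_\Omega(x,\cdot)\dx$ is convex by (A1)(i).

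For the variational characterisation, let $u$ be the unique minimiser. Given any $\varphi\in\Kc_{\psi_\Omega}(\Omega)$, the convex combination $u+t(\varphi-u)$ stays in $\Kc_{\psi_\Omega}(\Omega)$ for $t\in[0,1]$, so the real function $t\mapsto E(\Omega,u+t(\varphi-u))$ is minimised at $t=0$. Dividing the difference quotient by $t>0$, letting $t\searrow 0$, and using (A1)(iii) for the $W_\Omega$-contribution together with a direct computation for the quadratic parts yields \eqref{eq:VI_weak}. Conversely, any solution of \eqref{eq:VI_weak} is a minimiser by convexity, since $E(\Omega,\varphi)-E(\Omega,u)\ge dE(\Omega,u;\varphi-u)\ge 0$ for every $\varphi\in\Kc_{\psi_\Omega}(\Omega)$. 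The only non-routine step is the weak lower semicontinuity of the $W_\Omega$-integral, which however reduces, via Mazur's lemma, to the convexity in (A1)(i) and the strong continuity in (A1)(ii).
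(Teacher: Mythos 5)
Your proposal is correct and follows exactly the route the paper indicates (it gives no detailed proof, only the remark that Assumption (A1) together with the direct method of the calculus of variations yields unique solvability): coercivity and weak lower semicontinuity from (A1)(ii) and convexity in (A1)(i), uniqueness from strict convexity of the quadratic part since $\lambda>0$, and the equivalence with \eqref{eq:VI_weak} via the one-sided Gateaux derivative supplied by (A1)(iii). Nothing is missing.
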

%
In the sequel we will treat the variational inequality \eqref{eq:VI_weak}
by making use of the transformation for the state variable and its test-function:
$$
	y := u-\psi_\Omega\text{ and }\red \hat \varphi \black:= \varphi - \psi_\Omega.
$$
The variation inequality becomes a problem involving the standard obstacle set
$$
\Kc(\Omega) := \red K_0(\Omega)=\black\big\{ \varphi \in H^1(\Omega):\; \varphi\le 0 \text{ a.e. on  } \Omega\big\}.
$$
Substituting above tranformation into \eqref{eq:VI_weak} we 
obtain the following variational inequality:
\begin{align}
\label{eq:VI_weak2}
	\left\{
	\begin{aligned}
	&y\in \Kc(\Omega)\text{ and }\forall \ph\in \Kc(\Omega):\\
	&\int_\Omega \nabla y \cdot \nabla (\varphi - y)  + \lambda y (\varphi- 
	y) +   w_{\Omega}(x,y+\psi_\Omega)(\varphi-y)\dx\\
	&\qquad\ge - \int_\Omega \nabla \psi_\Omega \cdot \nabla (\varphi-y)
		+ \lambda \psi_\Omega(\varphi - y)\dx
	\end{aligned}
	\right.
\end{align}
Hence it will suffice to investigate the solution $y$ to deduce properties 
of the function $u$. 

\subsection{\red Perturbed problem\black}

	In this subsection we prove a shape sensitivity result
	for the variational inequality \eqref{eq:VI_weak2}.
	In what follows let us denote by $\Phi_t$ the flow generated by a vector 
	field $X\in C^1_c(D, \R^d)$. For  $\Omega \subseteq D$ denote by 
	$\Omega_t:=\Phi_t(\Omega)$, $t\ge 0$, the perturbed domains
	(see Appendix B for more details).
	
	The solution $y_t\in H^1(\Omega_t)$ to the perturbed
	variational inequality to \eqref{eq:VI_weak2} satisfies
\begin{align}
\label{eq:per_yt}
\left\{
\begin{aligned}
	&y_t\in \Kc(\Omega_t)\text{ and }\forall \ph\in \Kc(\Omega_t):\\
	&\int_{\Omega_t} \nabla y_t \cdot \nabla (\ph - y_t)
		+ \lambda y_t(\ph-y_t) +   w_{\Omega_t}(x,y_t+\psi_{\Omega_t})(\ph-y_t)\dx\\
		&\qquad
		\ge - \int_{\Omega_t} 
		\nabla \psi_{\Omega_t} \cdot \nabla (\ph-y_t) + \lambda \psi_{\Omega_t} (\ph - y_t )\dx.
\end{aligned}
\right.
\end{align}
We will sometimes write $y_t(X)=y_t$ to emphasise the dependence on $X$.
Please note that in general $y_0(X) = y_t(X)$ for all $t \ge 0$ and 
for all vector fields $X\in C^1_c(D, \R^2)$ with the property 
$ X\cdot n = 0 \text{ on } \partial \Omega $.
This implication will be used in the forthcoming Lemma \ref{lem:state_shape}.
Throughout this work we will adopt the following abbreviations:
\begin{align}
\begin{aligned}
	w^t_{X}(x,\ph) :={}& w_{\Omega_t}(\Phi_t(x),\ph),
	& W^t_{X}(x,\ph) :={}&  W_{\Omega_t}(\Phi_t(x),\ph),
	&\psi_X^t:={}&\psi_{\Omega_t}\circ \Phi_t,\\
	A(t):={}&\xi(t)(\partial \Phi_t)^{-1}(\partial \Phi_t)^{-T},
	&\xi(t):={}&\det{\partial \Phi_t},
	&y^t :={}&y_t \circ \Phi_t
\end{aligned}
\label{abbrev}
\end{align}
and (for $t=0$)
\begin{align*}
\begin{aligned}
	&\psi(x):=\psi_\Omega(x),
	&&w(x,\ph):=w^0_{X}(x,\ph).
\end{aligned}
\end{align*}
From Lemma~\ref{lemma:phit}
we can directly infer the following convergences and estimates
\begin{lemma}\label{lem:convergence_comp}
	Let $X\in C^1_c(D, \R^d)$ be given. Then it holds:
	\begin{itemize}
		\item[(i)]
			the convergences as $t\searrow 0$:
			\begin{subequations}
			\label{AxiDer}
			\begin{align}
			  \frac{A(t) - I}{t} \rightarrow{}& A'(0) = \di(X)I-\partial X - (\partial X)^\top && \text{ strongly in } C(\overline D, \R^{d,d}),\\
			  \frac{\xi(t)-1}{t} \rightarrow{}& \xi'(0) = \di(X) && \text{ strongly 
			  in } C(\overline D);
			\end{align}
			\end{subequations}
		\item[(ii)]
			there is a constant $t^*>0$ such that
			\begin{align*}
				\forall t \in & [0,t^*], \forall x\in \overline D, \forall\zeta\in\R^d,\quad A(t,x)\zeta\cdot \zeta \ge 1/2|\zeta|^2,\\
				\forall t\in & [0,t^*], \forall x\in \overline D, \quad \xi(t,x) \ge  1/2.
			\end{align*}
	\end{itemize}
\end{lemma}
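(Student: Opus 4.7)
The plan is to deduce everything from the regularity of the flow map $\Phi_t$ established in Lemma~\ref{lemma:phit} of Appendix~B, which provides $t\mapsto \Phi_t$ and $t\mapsto \partial\Phi_t$ as $C^1$-maps into $C(\overline D,\R^d)$ and $C(\overline D,\R^{d,d})$ respectively, with initial values $\Phi_0=\text{id}$, $\partial\Phi_0=I$, and derivatives at $t=0$ equal to $X$ and $\partial X$. From these I will read off everything by standard product/inverse/determinant rules, viewing the maps as taking values in the Banach algebra $C(\overline D,\R^{d,d})$.

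First I would compute the derivative of $t\mapsto (\partial\Phi_t)^{-1}$ at $t=0$. Since matrix inversion is smooth on the open set $GL(d)\subset \R^{d,d}$ and $\partial\Phi_0=I$, for small $t$ the map is well-defined and differentiable, with derivative $-(\partial\Phi_0)^{-1}(\partial X)(\partial\Phi_0)^{-1}=-\partial X$; by transposition the derivative of $(\partial\Phi_t)^{-\top}$ is $-(\partial X)^\top$. Next, by Jacobi's formula applied pointwise (or equivalently by the expansion $\det(I+tM)=1+t\,\text{tr}(M)+o(t)$ uniformly in $M$ bounded), one has $\xi'(0)=\text{tr}(\partial X)=\di(X)$ in $C(\overline D)$. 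Combining these via the product rule in $C(\overline D,\R^{d,d})$ and using $\xi(0)=1$, $(\partial\Phi_0)^{\pm 1}=I$, I obtain
\[
A'(0) = \xi'(0)\,I\cdot I + 1\cdot(-\partial X)\cdot I + 1\cdot I\cdot(-\partial X)^\top = \di(X)\,I - \partial X - (\partial X)^\top,
\]
which is exactly (i). The convergence statements in (i) are then simply the definition of differentiability at $t=0$ in these Banach spaces, yielding the claimed strong convergence in $C(\overline D)$ respectively $C(\overline D,\R^{d,d})$.

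For (ii), both quantities $A(t,x)$ and $\xi(t,x)$ are continuous on $[0,\tau]\times\overline D$ (by Lemma~\ref{lemma:phit}) and satisfy $A(0,x)=I$, $\xi(0,x)=1$. Hence $A(t,x)\zeta\cdot\zeta - \tfrac12|\zeta|^2 \ge (\tfrac12 - \|A(t,\cdot)-I\|_{C(\overline D,\R^{d,d})})|\zeta|^2$, and similarly $\xi(t,x)\ge 1-\|\xi(t,\cdot)-1\|_{C(\overline D)}$. Using the continuity of $t\mapsto A(t)$ and $t\mapsto \xi(t)$ at $t=0$, both $\|A(t,\cdot)-I\|_{C(\overline D,\R^{d,d})}$ and $\|\xi(t,\cdot)-1\|_{C(\overline D)}$ can be made less than $1/2$ for all $t\in[0,t^*]$ provided $t^*>0$ is chosen sufficiently small, giving the desired uniform bounds.

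There is no real obstacle in this proof; the only subtlety is that one must be careful to work in the Banach algebra $C(\overline D,\R^{d,d})$ throughout so that the product/inverse/determinant rules apply globally and not just pointwise, but this is immediate once Lemma~\ref{lemma:phit} is invoked. Because $X$ has compact support in $D$, the flow is well-defined globally on $\overline D$, ensuring that all quantities considered lie in the appropriate Banach space.
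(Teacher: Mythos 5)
Your proof is correct and follows exactly the route the paper intends: the paper gives no explicit proof, stating only that the lemma is ``directly inferred'' from Lemma~\ref{lemma:phit}, and your argument spells out precisely that inference (product rule in the Banach algebra $C(\overline D,\R^{d,d})$ for part (i), continuity of $t\mapsto A(t)$ and $t\mapsto\xi(t)$ at $t=0$ in the uniform norm for part (ii)). The only cosmetic remark is that Lemma~\ref{lemma:phit} already supplies the limits of the difference quotients of $(\partial\Phi_t)^{-1}$ and $\det(\partial\Phi_t)$ directly, so your separate appeals to smoothness of matrix inversion and Jacobi's formula are redundant but harmless.
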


Performing a change of variables and using 
$(\nabla y)\circ \Phi_t = (\partial \Phi_t)^{-T}\nabla (y\circ \Phi_t )$
it is easy to check that 
the transported function $y^t$ (which is defined on $\Omega$)
satisfies the relation
\begin{align}\label{eq:VI_weak_p_1}
	\left\{
	\begin{aligned}
	&y^t\in \Kc(\Omega)\text{ and }\forall \ph\in \Kc(\Omega):\\
	&\int_\Omega A(t)\nabla y^t \cdot \nabla (\ph - y^t)  + 
		\xi(t)\lambda y^t(\ph-y^t) +  \xi(t) w^t_{X}(x,y^t+\psi_X^t)(\ph-y^t) \dx \\
	&\qquad\ge\int_\Omega 
		-A(t)\nabla\psi_X^t \cdot \nabla (\ph-y^t) -\xi(t)\lambda\psi_X^t(\ph-y^t) \dx.
	\end{aligned}
	\right.
\end{align}
For later usage let us introduce the bilinear form
\begin{align*}
	\af^t(v,\red z\black) & :=  \int_\Omega A(t)\nabla v \cdot \nabla \red z\black + \xi(t)\lambda v\red z\black \dx,
\end{align*}
the operator $\mathcal A_t: K_{\psi_\Omega}(\Omega) \rightarrow H^1(\Omega)^*$
by
\begin{align}
	\langle\mathcal A_t(v),\red z\black \rangle_{H^1(\Omega)}
		:={}&\af^t(v,\red z\black)+\int_\Omega \xi(t) w^t_{X}(x,v)\red z\black\dx
	\label{Adef}
\end{align}
and the ``shifted'' operator $\tA_t: \Kc(\Omega) \rightarrow H^1(\Omega)^*$ by
\begin{align}
	\tA_t(v):=\C A_t(v+\psi_X^t).
	\label{tildeAdef}
\end{align}
By making use of this notation the variational inequality \eqref{eq:VI_weak_p_1} can be recasted as
\begin{equation}
	y^t\in\Kc(\Omega)\quad\text{and}\quad\langle\tA_t(y^t),\ph - y^t\rangle_{H^1}\ge 0\quad \text{ for all }\ph \in \Kc(\Omega).
	\label{eq:energy_compact}
\end{equation}
In the following we also write $\C A:=\C A_0$ and $\tA:=\tA_0$.
\subsection{\red Sensitivity estimate\black}
Our goal is to apply Theorem \ref{thm:sens_At} designed for abstract operators.
For this reason we make the following assumption in addition to (A1):\vspace*{0.5em}\\
\textbf{Assumption (A2)}
\textit{
\begin{itemize}
	\item[(i)]  $\forall X\in C^1_c(D, \R^d), \exists c>0,\forall t\in[0,\tau], \forall \chi \in H^1(\Omega)$, 
		$$ \|w^t_{X}(\cdot, \chi) - w(\cdot, \chi)\|_{L_{(2^*)'}(\Omega)} \le ct; $$
	\item[(ii)] $\forall X\in C^1_c(D, \R^d), \exists c>0,\forall t\in[0,\tau], \forall \chi_1, \chi_2\in H^1(\Omega)$, 
		$$ \|w^t_{X}(\cdot, \chi_1) - w^t_{X}(\cdot, \chi_2)\|_{L_{(2^*)'}(\Omega)} \le c \|\chi_1- \chi_2\|_{H^1(\Omega)};$$
	\item[(iii)]
		$\forall X\in C^1_c(D, \R^d), \exists c>0,\forall t\in[0,\tau],$
		$$
			\|\psi_X^t-\psi\|_{H^1(\Omega)}\leq ct.
		$$
\end{itemize}
}
We are now in the position to prove the following sensitivity result:
\begin{proposition}\label{prop:At_mon_cont}
	Let the Assumptions (A1)-(A2) be satisfied.
	Then the family of operators $(\tA_t)$ defined by \eqref{tildeAdef} fulfills
        \begin{itemize}	
		\item[(i)] $\exists \alpha>0 $, $\exists t^*>0$, $\forall t\in [0,t^*]$, $\forall v,\red z\black\in \Kc(\Omega)$,
		     \begin{equation}\label{eq:monotonicity_At}
			   	\alpha \| v-\red z\black \|_{H^1(\Omega)}^2 \le \langle \tA_t(v) - \tA_t(\red z\black), v-\red z\black\rangle;
			   \end{equation}
		\item[(ii)] $\forall v\in \Kc(\Omega)$, $\exists c>0 $, $\exists t^* >0$, $\forall t\in [0,t^*]$, $\forall \red z\black\in \Kc(\Omega)$,
			\begin{equation}\label{eq:cont_At}
				|\langle \tA_t(v) - \tA(v), v - \red z\black \rangle| \le c t \|v-\red z\black\|_{H^1(\Omega)}.
      \end{equation}
	\end{itemize}

\end{proposition}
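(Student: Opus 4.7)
The plan is to handle the two items separately, reducing each to a combination of the uniform bounds from Lemma \ref{lem:convergence_comp} on $A(t),\xi(t)$, the convexity/Lipschitz assumptions on $w^t_X$ in Assumption (A2), and the Lipschitz dependence of the transported obstacle $\psi_X^t$ in (A2)(iii). Throughout I will expand $\tA_t(v)=\C A_t(v+\psi_X^t)$ and split the linear (bilinear-form) and semilinear contributions.

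\textbf{Proof of (i).} First I would write
$\langle\tA_t(v)-\tA_t(z),v-z\rangle_{H^1}=\af^t(v-z,v-z)+\int_\Omega\xi(t)\bigl(w^t_X(\cdot,v+\psi_X^t)-w^t_X(\cdot,z+\psi_X^t)\bigr)(v-z)\dx.$
For the bilinear form part, Lemma \ref{lem:convergence_comp}(ii) gives, for $t\in[0,t^*]$, both $A(t)\zeta\cdot\zeta\ge\tfrac12|\zeta|^2$ and $\xi(t)\ge\tfrac12$, hence $\af^t(v-z,v-z)\ge\tfrac12\min(1,\lambda)\|v-z\|_{H^1}^2$. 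For the semilinear part, Assumption (A1)(i) says $W_\Omega(x,\cdot)$ is convex so its derivative $w_\Omega(x,\cdot)$, and therefore $w^t_X(x,\cdot)$, is monotone in the second argument; because $\xi(t)\ge 0$ the integral is nonnegative. Combining yields (i) with $\alpha=\tfrac12\min(1,\lambda)$.

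\textbf{Proof of (ii).} Here I would insert the intermediate term $\C A_t(v+\psi)$ and write
$\tA_t(v)-\tA(v)=\bigl[\C A_t(v+\psi_X^t)-\C A_t(v+\psi)\bigr]+\bigl[\C A_t(v+\psi)-\C A(v+\psi)\bigr].$
For the first bracket, after expanding, the linear part equals $\int_\Omega A(t)\nabla(\psi_X^t-\psi)\cdot\nabla(v-z)+\xi(t)\lambda(\psi_X^t-\psi)(v-z)\dx$, bounded by Cauchy--Schwarz and the uniform $L^\infty$-bounds on $A(t),\xi(t)$ times $\|\psi_X^t-\psi\|_{H^1}\|v-z\|_{H^1}\le ct\|v-z\|_{H^1}$ by (A2)(iii); the semilinear part involves $w^t_X(\cdot,v+\psi_X^t)-w^t_X(\cdot,v+\psi)$, whose $L_{(2^*)'}$-norm is bounded by $c\|\psi_X^t-\psi\|_{H^1}\le ct$ via (A2)(ii), and then paired with $v-z\in H^1\hookrightarrow L_{2^*}$ through H\"older's inequality. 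For the second bracket, the linear part is $\int_\Omega (A(t)-I)\nabla(v+\psi)\cdot\nabla(v-z)+(\xi(t)-1)\lambda(v+\psi)(v-z)\dx$, estimated using Lemma \ref{lem:convergence_comp}(i), which provides $\|A(t)-I\|_{C(\overline D)},|\xi(t)-1|\le ct$; the semilinear part splits further as $\xi(t)[w^t_X(\cdot,v+\psi)-w(\cdot,v+\psi)]+(\xi(t)-1)w(\cdot,v+\psi)$, each factor having $L_{(2^*)'}$-norm $\mathcal O(t)$ by (A2)(i) and the boundedness of $w(\cdot,v+\psi)$ in $L_{(2^*)'}$ (the latter being contained in the well-definedness ensured by (A1)(ii)--(iii)). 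Summing the contributions yields (ii).

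The only mildly delicate point I anticipate is the H\"older pairing of the semilinear terms: one has to consistently use $H^1(\Omega)\hookrightarrow L_{2^*}(\Omega)$ so that integrating an $L_{(2^*)'}$-function against $v-z\in H^1$ produces a bound of the form $ct\|v-z\|_{H^1}$ with $t$-independent constant, and to observe that the fixed element $w(\cdot,v+\psi)$ lies in $L_{(2^*)'}(\Omega)$ (this is the natural integrability supplied by Remark \ref{rmk:A1} and (A1)(iii)). Everything else is a bookkeeping computation combining Lemma \ref{lem:convergence_comp} with the three items of Assumption (A2).
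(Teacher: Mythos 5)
Your proposal is correct and follows essentially the same route as the paper: part (i) is identical (coercivity of $\af^t$ from Lemma \ref{lem:convergence_comp}(ii) plus monotonicity of $w^t_X$ from the convexity in (A1)(i)), and part (ii) is the same telescoping decomposition into linear and semilinear differences estimated via H\"older, the embedding $H^1(\Omega)\hookrightarrow L_{2^*}(\Omega)$, Lemma \ref{lem:convergence_comp}(i) and (A2)(i)--(iii), with only a cosmetically different grouping of the intermediate terms (the paper keeps $(\xi(t)-1)w^t_X(\cdot,v+\psi_X^t)$ where you use $(\xi(t)-1)w(\cdot,v+\psi)$ plus $\xi(t)[w^t_X-w]$; both require the same $v$-dependent $L_{(2^*)'}$-bound, which is admissible since the constant in (ii) may depend on $v$).
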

\begin{proof}
	\textit{To (i):} We first show the monotonicity estimate  \eqref{eq:monotonicity_At}.
      	With the help of Lemma~\ref{lem:convergence_comp} (ii) and monotonicity of $w_{X}^t$ in
	the second variable (see Assumption (A1) (i))
	we obtain for all $v,\red z\black \in H^1(\Omega)$ and all small $t\ge0$
	\begin{equation}\label{eq:mon-1}
		\begin{split}
			&\frac12 \int_\Omega  |\nabla  (v-\red z\black)|^2 + \lambda |v-\red z\black|^2 \dx\\
			&\qquad\le \af^t(v-\red z\black,v-\red z\black)\\
			&\qquad\quad + \int_\Omega \xi(t)\big(w^t_{X}(x,v+\psi_X^t) - w^t_{X}(x,\red z\black+\psi_X^t)\big)\big((v+\psi_X^t)-(\red z\black+\psi_X^t)\big)\dx 
	  \end{split}	  
	  \end{equation}
	Thus \eqref{eq:monotonicity_At} is shown.
	
	\textit{To (ii):} Let us fix $v\in H^1(\Omega)$.
	Then by applying H\"older inequality, Sobolev embeddings and the assumptions in (A2)
	we find for all $\red z\black\in H^1(\Omega)$
	\begin{align*}
		&\langle \tA_t(v) - \tA(v), v - \red z\black \rangle\\
		&\qquad\le
			\underbrace{\int_\Omega (A(t)-I)\nabla v \cdot \nabla (v - \red z\black)\dx}_{\leq\|A(t)-I\|_{L_\infty}\|\nabla v\|_{L_2}\|\nabla(v-\red z\black)\|_{L_2}}
			+\underbrace{\int_\Omega\lambda (\xi(t)-1) v( v -  \red z\black) \dx}_{\leq \lambda\|\xi(t)-1\|_{L_\infty}\|v\|_{L_2}\|v-\red z\black\|_{L_2}}\\
		&\qquad\quad+\hspace*{-5.9em}\underbrace{\int_\Omega (A(t)\nabla\psi_X^t-\nabla\psi) \cdot \nabla (v-\red z\black) + 
			\lambda(\xi(t)\psi_X^t-\psi)(v-\red z\black)\dx}_{\qquad\qquad\qquad
				\leq\big(\|A(t)-I\|_{L_\infty}\|\nabla\psi_X^t\|_{L_2}+\|\nabla\psi_X^t-\nabla\psi\|_{L_2}
					+ \lambda\|\xi(t)-1\|_{L^\infty}\|\psi_X^t\|_{L^2}+\lambda\|\psi_X^t-\psi\|_{L_2}\big)\|v-\red z\black\|_{H^1}} \\
		&\qquad\quad+\underbrace{\int_\Omega (\xi(t)-1)w^t_{X}(x,v+\psi_X^t)(v-\red z\black)\, dx}_{\leq \|\xi(t)-1\|_{L_\infty}\|w^t_{X}(x,v+\psi_X^t)\|_{L_{(2^*)'}}
			\|v-\red z\black\|_{H^1}}\\
		&\qquad\quad+\hspace*{-4.4em}\underbrace{\int_\Omega (w^t_{X}(x,v+\psi_X^t) - w_{X}^t(x,v+\psi))(v - \red z\black)\dx.}_{\qquad\qquad\quad
			\leq \|w^t_{X}(x,v+\psi_X^t) - w^t(x,v+\psi)\|_{L_{(2^*)'}}\|v - \red z\black\|_{H^1}\,\leq\, \|\psi_X^t-\psi \|_{H^1} \|v - \red z\black\|_{H^1}}\\ 
		&\qquad\quad+\hspace*{-1.5em}\underbrace{\int_\Omega (w^t_{X}(x,v+\psi) - w(x,v+\psi))(v - \red z\black)\dx.}_{\qquad
			\leq \|w^t_{X}(x,v+\psi) - w(x,v+\psi)\|_{L_{(2^*)'}}\|v - \red z\black\|_{H^1}\,\le\,ct \|v-\red z\black\|_{H^1}} 
	\end{align*}
	Taking Lemma~\ref{lem:convergence_comp} into account and using Young's inequality,
	we obtain the assertion.
\end{proof}
The desired Lipschitz estimate immediately follows from Theorem~\ref{thm:sens_At}
since Proposition~\ref{prop:At_mon_cont} proves that Assumption (O2)
are satisfied for $p=2$.
\begin{corollary}\label{lem:sensitivity}
	Under the assumption of Proposition \ref{prop:At_mon_cont}
	there exist $t^* >0$ and $c>0$ such that 
	$$
		\|y^t - y\|_{H^1(\Omega)}\le ct \quad \text{ for all } t\in [0, t^*].
	$$
\end{corollary}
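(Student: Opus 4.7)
The plan is to apply Theorem~\ref{thm:sens_At} with $p=2$, $V = H^1(\Omega)$, $K = \Kc(\Omega)$, to the operator family $(\tA_t)$ defined in \eqref{tildeAdef}. The space $H^1(\Omega)$ is reflexive and $\Kc(\Omega)$ is closed and convex, so the ambient hypotheses of the abstract theorem are trivially met.

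First I would note that the reformulated variational inequality \eqref{eq:energy_compact} is exactly the abstract problem \eqref{eq:ut_abstract} with $u^t$ replaced by $y^t$: it states $y^t \in \Kc(\Omega)$ together with $\langle \tA_t(y^t), \varphi - y^t\rangle_{H^1(\Omega)} \ge 0$ for every $\varphi \in \Kc(\Omega)$. Hence Theorem~\ref{thm:sens_At} is directly applicable once Assumption (O2) is verified for the family $(\tA_t)$ with $p = 2$.

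Next I would observe that Assumption (O2) with $p = 2$ is precisely what Proposition~\ref{prop:At_mon_cont} provides: part (i) of the proposition is clause (i) of (O2), namely the uniform $H^1$-coercivity \eqref{eq:monotonicity_At}, and part (ii) of the proposition is clause (ii) of (O2), namely the linear-in-$t$ continuity estimate \eqref{eq:cont_At}. Since each part is stated on some interval $[0,t^*]$, I would take the minimum of the two values to obtain a common $t^*>0$ on which both estimates hold simultaneously. Theorem~\ref{thm:sens_At} then yields $\|y^t - y\|_{H^1(\Omega)} \le c\, t^{1/(p-1)} = c\,t$ for all $t\in[0,t^*]$, which is the claimed bound.

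There is no genuine obstacle in this step: the change of variables $y = u - \psi_\Omega$ and the construction of $\tA_t$ have already recast the problem in the abstract framework, and all the real analytic content (uniform monotonicity, continuity in the shape parameter, and the Lipschitz behaviour of the dynamic obstacle and nonlinearity) has been discharged in Proposition~\ref{prop:At_mon_cont}. The corollary is therefore a direct specialisation of the abstract sensitivity theorem.
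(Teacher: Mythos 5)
Your proposal is correct and is exactly the route the paper takes: the text following Proposition \ref{prop:At_mon_cont} states that the Lipschitz estimate ``immediately follows from Theorem \ref{thm:sens_At} since Proposition \ref{prop:At_mon_cont} proves that Assumption (O2) [is] satisfied for $p=2$.'' Your additional remarks (identifying \eqref{eq:energy_compact} with the abstract inequality \eqref{eq:ut_abstract} and taking the minimum of the two values of $t^*$) only make explicit what the paper leaves implicit.
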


\subsection{Limiting system for the transformed material derivative}
In Corollary~\ref{lem:sensitivity} we have established a Lipschitz estimate for 
the mapping $t\mapsto y^t$.
In this section we are going to prove that there is a unique element
$\dot y$ in $H^1(\Omega)$ -- called the material derivative --
such that $(y^t-y)/t$ converges strongly 
to $\dot y$ in $H^1(\Omega)$ which is uniquely determined 
by a variational inequality. 

In order to derive the differentiability of $y^t$ we
impose the additional assumptions to (A1) and (A2):\vspace*{0.5em}\\
\textbf{Assumption (A3)}
\textit{
  \begin{itemize}
  	\item[(i)]
	  	$w(x,\cdot)$ is of class $C^1(\R)$
	  	for all $ x\in \Omega$;
  	\item[(ii)]
			for all $X\in C^1_c(D, \R^d)$, there exists a
			function $\dot w_{X}:\Omega\times\R\to\R$ such that
			for all $\ph_n\to\ph$ strongly in $H^1(\Omega)$
			we have
  	 	$\dot w_{X}(\cdot, \ph)\in L_{(2^*)'}(\Omega)$ and
  	 	for all $t_n\searrow 0$
			\begin{align*}
				\frac{w^{t_n}_{X}(\cdot, \ph_n) - w(\cdot, \ph_n)}{t_n} &\rightarrow
				\dot w_{X}(\cdot, \ph) && \text{ strongly in } L_{(2^*)'}(\Omega)\text{ as } n\to\infty;
			\end{align*}
    \item[(iii)]
    	for any given sequences
    	$\ph_n\to\ph$ in $H^1(\Omega)$ and
    	$t_n\searrow 0$
    	with $(\ph_n-\ph)/t_n\weaklim z$ weakly in $H^1(\Omega)$:
			\begin{align*}
				\frac{w(\cdot,\ph_{n})-w(\cdot,\ph)}{t_n} \rightarrow{}&\partial_y w(\cdot,\ph)z
					&& \text{ strongly in } L_{(2^*)'}(\Omega)\text{ as }n\to\infty;
			\end{align*}
		\item[(iv)]
			for all $X\in C^1_c(D,\R^d)$ there exists a function $\dot\psi_{X}\in H^1(\Omega)$
			such that
			\begin{align*}
				&\frac{\psi_X^{t}-\psi}{t}\to\dot\psi_X
					&& \text{ strongly in } H^1(\Omega)\text{ as }t\searrow 0.
			\end{align*}
  \end{itemize}
}
\begin{remark}
\label{remark:contA}
	\begin{itemize}
		\item[(i)]
			Property (iii) from Assumption (A3) is satisfied if, e.g., 
			there exist a constant $C>0$ and a function $s\in L_{\frac{2^*-1}{2^*-2}}(\Omega)$ such that for all $x\in \Omega$ and $y\in \R$:
			\begin{align*}
				|\partial_y w(x,y)|\leq C|y|^\alpha+s(x)
			\end{align*}
			with the exponent $\alpha:=\frac{2^*(2^*-1)}{2^*-2}$.
			The constant $\alpha$ is chosen such that the function
			$x\mapsto\partial_y w(x,\ph(x))z(x)$
			and $x\mapsto f'(\ph(x))z(x)$
			are in $L_{(2^*)'}(\Omega)$ for given $\ph,z\in H^1(\Omega)$.
		\item[(ii)]
			A useful consequence of properties (ii) and (iii) is the following
			continuity
			\begin{align*}
				&w_{X}^{t_n}(\cdot,\varphi_n)\to w(\cdot,\varphi)\quad
					\text{ strongly in } L_{(2^*)'}(\Omega)\text{ as }n\to\infty.
			\end{align*}
			for all $\ph_n\to\ph$ strongly in $H^1(\Omega)$
			and $t_n\searrow 0$.
		\item[(iii)]
			Let $X\in C^1_c(D, \R^d)$ be given. Then we have 
			by using property (iv) from Assumption (A3)
			\begin{align*}
			   \frac{-A(t)\nabla\psi_X^t+\nabla\psi}{t} \rightarrow{}&  -A'(0) \nabla \psi
			   - \nabla \dot\psi_X && \text{ strongly in } L_2(\Omega, \R^d),\\
			   \frac{-\xi(t)\psi_X^t+\psi}{t} \rightarrow{}&-\xi'(0)\psi - \dot\psi_X&& \text{ strongly in } L_2(\Omega, \R^d).
			\end{align*}
	\end{itemize}
\end{remark}
We are now well-prepared for the derivation of the material derivative.
\begin{theorem}
\label{thm:weakMatDer}
			Let (A1)-(A3) be satisfied.
      The weak material derivative $\dot y$  of $t \mapsto  y^t$ exists in 
      all directions $X\in C^1_c(D, \R^d)$ and is characterised as 
      the unique solution of the following variational inequality
      \begin{align}\label{eq:limiting}
       	\left\{
       	\begin{aligned}
       		&\dot y\in \tilde S_y(\Kc)\text{ and }\forall \ph\in\tilde S_y(\Kc):\\
       		&\langle \partial \tA(y)\dot y, \varphi - \dot y \rangle_{H^1} \ge - \langle \tA'(y), \varphi - \dot y\rangle_{H^1},
      	\end{aligned}
      	\right.
      \end{align}
			where $\tilde S_y(\Kc)$ denotes the closed and convex cone
      \begin{align}
				\tilde S_y(\Kc) = T_y(\Kc) \cap \text{kern}(\tA(y)).
				\label{SKdef}
      \end{align}
      The functional derivatives $\partial \tA$ and
      $\tA'$ are given by
      \begin{align} 
      	&\langle\partial\tA(y)\dot y,\ph\rangle
      		=\af( \dot y+\dot\psi_X, \ph)+\int_\Omega \partial_y w(x,y+\psi)\dot y\ph\dx,\\
      	&\begin{aligned}
	      	\langle\tA'(y),\ph\rangle={}&
	      		\int_\Omega A'(0)\nabla y\cdot\nabla\ph
	      		+\xi'(0)\big(\lambda y+w(x,y+\psi)\big)\ph\dx\\
	      		&+\int_\Omega\dot w_{X}(x,y+\psi)\ph+\partial_y w(x,y+\psi)\dot\psi_X\ph\dx\\
	      		&+\int_\Omega A'(0)\nabla\psi\cdot\nabla\ph+\xi'(0)\lambda\psi\ph\dx.
      	\end{aligned}
      \end{align}
\end{theorem}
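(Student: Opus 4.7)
The plan is to verify Assumption~(O3) for the shifted family $(\tA_t)$ introduced in \eqref{tildeAdef} and then apply the abstract material derivative result, Theorem~\ref{thm:mat_abstract}. The setting fits that framework: $V = H^1(\Omega)$ is reflexive, the cone $\Kc(\Omega)$ is polyhedric by Theorem~\ref{theorem:Kpolyhedr}, and Proposition~\ref{prop:At_mon_cont} already verifies Assumption~(O2) with $p=2$. Corollary~\ref{lem:sensitivity} then gives the Lipschitz estimate $\|y^t - y\|_{H^1} \le ct$, so $y^t \to y$ strongly in $H^1(\Omega)$ and the difference quotients $z^t := (y^t - y)/t$ are uniformly bounded in $H^1(\Omega)$; along a subsequence one extracts a weak limit $z^{t_n} \weaklim \dot y$.

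The three parts of Assumption~(O3) are then verified by direct computation on the defining expression of $\tA_t$. Part~(i) is straightforward: the quadratic part of $\tA_t$ is evidently differentiable in both $v$ and $t$, while differentiability of the semilinear part $\int_\Omega \xi(t)\, w^t_X(x, v+\psi_X^t)\, \ph\, \dx$ in $v$ follows from (A3)(i) and in $t$ from (A3)(ii),(iv) together with Lemma~\ref{lem:convergence_comp}. For part~(ii) I would split
\[
\frac{\tA_{t_n}(u^{t_n}) - \tA(u^{t_n})}{t_n} = \frac{\C A_{t_n}(u^{t_n} + \psi_X^{t_n}) - \C A(u^{t_n} + \psi_X^{t_n})}{t_n} + \frac{\C A(u^{t_n} + \psi_X^{t_n}) - \C A(u^{t_n} + \psi)}{t_n}
\]
and pass to the limit using (A3), Lemma~\ref{lem:convergence_comp}, the Rellich embedding $H^1 \hookrightarrow L_q$ for $q < 2^*$, and Remark~\ref{remark:contA}(ii); the strong convergences of the coefficients allow one to absorb the weakly convergent factor $v_n$ against strongly convergent ones.

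The main obstacle is part~(iii), specifically the liminf inequality
\[
\langle \partial\tA(y)\dot y,\, \dot y\rangle \le \liminf_{n\to\infty}\left\langle \frac{\tA(y^{t_n}) - \tA(y)}{t_n},\, z^{t_n}\right\rangle.
\]
The quadratic contribution $\af(z^{t_n}, z^{t_n})$ is controlled by weak lower semicontinuity since $\af$ is a coercive symmetric bilinear form on $H^1(\Omega)$. For the semilinear contribution, the difference quotient $(w(\cdot, y^{t_n}+\psi) - w(\cdot, y+\psi))/t_n$ converges strongly in $L_{(2^*)'}(\Omega)$ to $\partial_y w(\cdot, y+\psi)\dot y$ by (A3)(iii), while $z^{t_n}$ converges strongly in $L_{2^*}(\Omega)$ to $\dot y$ via Rellich along a subsequence, so the duality pairing $L_{(2^*)'} \times L_{2^*}$ converges and the liminf is in fact an equality on this piece. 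The second statement in (O3)(iii), for strongly convergent test-sequences $v_n$, is handled by the same splitting.

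With (O3) in place, Theorem~\ref{thm:mat_abstract} directly yields $\dot y \in \tilde S_y(\Kc) = T_y(\Kc) \cap \text{kern}(\tA(y))$ solving~\eqref{eq:limiting}. Uniqueness of the solution to \eqref{eq:limiting} follows from strong monotonicity of $\partial\tA(y)$, inherited from Proposition~\ref{prop:At_mon_cont}(i), and this in turn forces the entire sequence $z^t$ to converge weakly to $\dot y$, justifying the terminology ``weak material derivative''. The explicit formulas for $\partial\tA(y)\dot y$ and $\tA'(y)$ are then extracted by collecting the individual contributions from the computations above: the $\dot\psi_X$-terms arise from the shift in $\tA_t = \C A_t(\cdot + \psi_X^t)$, the $A'(0)$, $\xi'(0)$ and $\dot w_X$ contributions come from the explicit $t$-dependence of $\C A_t$, and the $\partial_y w$ contribution from (A3)(i),(iii).
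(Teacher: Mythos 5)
Your proposal is correct and follows essentially the same route as the paper: verify Assumption (O3) for $(\tA_t)$ term by term using (A3), Lemma \ref{lem:convergence_comp} and Remark \ref{remark:contA}, invoke Theorem \ref{thm:mat_abstract} on the polyhedric cone $\Kc(\Omega)$, and obtain uniqueness from the coercivity of $\af$ together with $\partial_y w\ge 0$. One cosmetic remark: Rellich does not give strong convergence of $z^{t_n}$ in the critical space $L_{2^*}$, but this is harmless because weak convergence in $L_{2^*}$ paired against the strong $L_{(2^*)'}$-convergence of the $w$-difference quotients already yields the claimed limit, which is also how the paper argues.
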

\begin{proof}
\hspace*{0.01em}\\\textit{Existence of $\dot y$:}
	We want to apply Theorem~\ref{thm:mat_abstract}. For this we need to check 
	Assumption (O3).
	To this end we notice that by Corollary \ref{lem:sensitivity} $y_{t_n}\to u$ strongly and $(y_{t_n}-y)/t_n\weaklim z$ weakly in $H^1(\Omega)$
	for a suitable subsequence $t_n\searrow 0$.
	
	\begin{itemize}
		\item[$\bullet$]
		We check (O3) (ii):
		Let $v_n\weaklim v$ be a given weakly \red convergent \black sequence in $H^1(\Omega)$. Then
		\begin{align*}
			&\left \langle \frac{\tA_{t_n}(y^{t_n}) - \tA(y^{t_n})}{t_n}, v_n \right\rangle\\
			&=\underbrace{\int_\Omega\frac{A(t_n)-I}{t_n}\nabla y^{t_n}\cdot\nabla v_n\dx}_{\to \int_\Omega A'(0)\nabla y\cdot \nabla v\dx}
					+\underbrace{\int_\Omega\frac{\xi(t_n)-1}{t_n}\Big(\lambda y^{t_n}+w_{X}^{t_n}(x,y^{t_n}+\psi_X^{t_n})\Big)v_n\dx}_{
						\to \int_\Omega\xi'(0)(\lambda y+w(x,y+\psi))v\dx\quad\text{by Remark \ref{remark:contA} (ii)}}\\
				&\quad
					+\underbrace{\int_\Omega\frac{w_{X}^{t_n}(x,y^{t_n}+\psi_X^{t_n})-w(x,y^{t_n}+\psi_X^{t_n})}{t_n}v_n\dx}_{
						\to \int_\Omega\dot w_{X}(x,y+\psi)v\dx\quad\text{by Assumption (A3) (ii) and (iv)}}\\
				&\quad+\underbrace{\int_\Omega\frac{w(x,y^{t_n}+\psi_X^{t_n})-w(x,y+\psi)}{t_n}v_n\dx
					-\int_\Omega\frac{w(x,y^{t_n}+\psi)-w(x,y+\psi)}{t_n}v_n\dx}_{
						\to \int_\Omega\partial_y w(x,y+\psi)(z+\dot\psi_X)v\dx-\int_\Omega\partial_y w(x,y+\psi)zv\dx\,=\,\int_\Omega\partial_y w(x,y+\psi)\dot\psi_X v\dx\;\text{by (A3) (iii)-(iv)}}\\
				&\quad+\underbrace{\int_\Omega\frac{A(t_n)-I}{t_n}\nabla\psi_X^{t_n}\cdot\nabla v_n
					+\frac{\xi(t_n)-1}{t_n}\psi_X^{t_n}v_n\dx}_{
						\to -\int_\Omega A'(0)\nabla\psi\cdot\nabla v+\xi'(0)\psi v\dx}.
		\end{align*}
		\item[$\bullet$]
		We check (O3) (iii):
		\begin{align*}
			&\left\langle \frac{\tA(y^{t_n}) - \tA(y)}{t_n}, \frac{y^{t_n}-y}{t_n} \right\rangle\\
			&=\underbrace{\int_\Omega\Big|\nabla \frac{y^{t_n}-y}{t_n}\Big|^2
				+\lambda\Big|\frac{y^{t_n}-y}{t_n}\Big|^2\dx}_{\liminf\,\geq\,\int_\Omega |\nabla z|^2+\lambda |z|^2\dx}
				+\underbrace{\int_\Omega\frac{w(x,y^{t_n}+\psi)-w(x,y+\psi)}{t_n}\frac{y^{t_n}-y}{t_n}\dx}_{\to\int_\Omega \partial_y w(x,y+\psi)|z|^2\quad\text{by Assumption (A3) (iii)}}
		\end{align*}
		and for all $\ph_n\to\ph$ strongly in $H^1(\Omega)$:
		\begin{align*}
			&\left\langle \frac{\tA(y^{t_n}) - \tA(y)}{t_n}, \ph_n \right\rangle\\
			&=\underbrace{\int_\Omega\nabla \frac{y^{t_n}-y}{t_n}\cdot\nabla\ph_n
				+\lambda\frac{y^{t_n}-y}{t_n}\ph_n\dx}_{\to\,\int_\Omega \nabla z\cdot\nabla\ph+\lambda z\ph\dx}
				+\underbrace{\int_\Omega\frac{w(x,y^{t_n}+\psi)-w(x,y+\psi)}{t_n}\ph_n\dx}_{
					\to\int_\Omega \partial_y w(x,y+\psi)z\ph\quad\text{by Assumption (A3) (iii)}}.
		\end{align*}
		\item[$\bullet$]
		Property (O3) (i) follows from the above calculations.
	\end{itemize}
\textit{Uniqueness of $\dot y$:}
Assume two solutions $\dot y$ and $\dot z$ for \eqref{eq:limiting}.
Testing their variational inequalities with $\dot z$ and $\dot y$, respectively, and adding the result yields
\begin{align*}
	\langle \partial\tA(y)\dot y-\partial\tA(y)\dot z, \dot y-\dot z\rangle\leq 0.
\end{align*}
The left-hand side calculates as
\begin{align*}
	&\langle \partial\tA(y)\dot y-\partial\tA(y)\dot z, \dot y-\dot z\rangle\\
  &\quad=\af(\dot y-\dot z, \dot y-\dot z)+\int_\Omega\partial_y w(x,y+\psi)|\dot y-\dot z|^2\dx.
\end{align*}
Due to the convexity assumption in (A1) (i) we find $\partial_y w\geq 0$
and see that
\begin{align*}
  \af(\dot y-\dot z, \dot y-\dot z)
  	\leq 0.
\end{align*}
We obtain $\dot y-\dot z=0$.
\end{proof}

By exploiting the specific structure of $\tilde{\C A}_t$ and Assumption (A3)
we can even show that the strong material derivative exists.
\begin{corollary}
\label{cor:strongMatDer}
We have for all $X\in C^1_c(D, \R^d)$ 
\begin{align}
	\frac{ y_X^t - y}{t} \rightarrow \dot{y}_X\qquad\text{strongly in } H^1(\Omega).
\end{align}
\end{corollary}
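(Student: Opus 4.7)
The plan is to upgrade the weak convergence $z^t := (y^t - y)/t \weaklim \dot y$ from Theorem~\ref{thm:weakMatDer} to strong convergence in $H^1(\Omega)$ via the Radon--Riesz property. Since $\lambda > 0$, the bilinear form $\af(v,v) = \|\nabla v\|_{L_2}^2 + \lambda\|v\|_{L_2}^2$ induces an equivalent norm on $H^1(\Omega)$, so weak lower semicontinuity already gives $\af(\dot y,\dot y) \le \liminf_{t \searrow 0}\af(z^t,z^t)$, and the whole task reduces to proving the matching upper bound $\limsup_{t \searrow 0} \af(z^t,z^t) \le \af(\dot y,\dot y)$.

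To produce this bound I would reuse the key inequality \eqref{eq:limit_abstract-3} from the proof of Theorem~\ref{thm:mat_abstract}, specialised to $(\tA_t)$: for every $\varphi \in C_y(\Kc) \cap \text{kern}(\tA(y))$ and every sufficiently small $t>0$,
\begin{equation*}
	\left\langle \frac{\tA(y^t) - \tA(y)}{t}, \varphi - z^t \right\rangle \ge -\left\langle \frac{\tA_t(y^t) - \tA(y^t)}{t}, \varphi - z^t \right\rangle.
\end{equation*}
Using the explicit identity $\langle \tA(v) - \tA(u), \zeta\rangle = \af(v-u,\zeta) + \io (w(x,v+\psi) - w(x,u+\psi))\zeta\dx$ and isolating $\af(z^t,z^t)$ yields
\begin{equation*}
	\af(z^t, z^t) \le \af(z^t, \varphi) + \io \frac{w(x,y^t+\psi) - w(x,y+\psi)}{t}(\varphi - z^t)\dx + \left\langle \frac{\tA_t(y^t)-\tA(y^t)}{t}, \varphi - z^t\right\rangle.
\end{equation*}
Because $\Kc$ is polyhedric by Theorem~\ref{theorem:Kpolyhedr} and Lemma~\ref{lemma:polyhedr}(i) identifies $\tilde S_y(\Kc) = T_y(\Kc) \cap \text{kern}(\tA(y))$ with the $H^1$-closure of $C_y(\Kc) \cap \text{kern}(\tA(y))$, I can select $\varphi_n$ from the latter set with $\varphi_n \to \dot y$ strongly in $H^1(\Omega)$.

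Setting $\varphi = \varphi_n$ and taking $\limsup_{t \searrow 0}$ for $n$ fixed, each right-hand side term is actually a limit: $\af(z^t, \varphi_n) \to \af(\dot y, \varphi_n)$ by weak convergence; the $w$-integral tends to $\io \partial_y w(x,y+\psi)\dot y\,(\varphi_n - \dot y)\dx$ thanks to the strong $L_{(2^*)'}$-convergence of the difference quotient supplied by Assumption (A3)(iii), paired with the weak $L_{2^*}$-limit of $\varphi_n - z^t$; and the residual tends to $\langle \tA'(y), \varphi_n - \dot y\rangle$ by Assumption (O3)(ii), whose strong-convergence hypothesis $y^t \to y$ is furnished by Corollary~\ref{lem:sensitivity}. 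Letting $n \to \infty$ with $\varphi_n \to \dot y$ in $H^1(\Omega)$ kills the last two contributions while $\af(\dot y,\varphi_n) \to \af(\dot y,\dot y)$, so $\limsup_{t \searrow 0}\af(z^t,z^t) \le \af(\dot y,\dot y)$; combined with weak convergence, Radon--Riesz then delivers $z^t \to \dot y$ strongly in $H^1(\Omega)$.

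The main obstacle is juggling the two limits $t \searrow 0$ and $n \to \infty$ together with the density approximation, while keeping careful track of which convergences are strong and which are only weak in the relevant Lebesgue/Sobolev spaces. In particular, the strong $L_{(2^*)'}$ versus weak $L_{2^*}$ pairing furnished by (A3)(iii) is exactly what neutralises the nonlinear $w$-term, and the Lipschitz estimate of Corollary~\ref{lem:sensitivity} is what legitimises invoking (O3)(ii) with the $t$-dependent test functions $\varphi_n - z^t$.
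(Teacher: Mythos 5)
Your argument is correct. It follows the same overall strategy as the paper's proof --- establish $\limsup_{t\searrow 0}\af(z^t,z^t)\le\af(\dot y,\dot y)$, combine this with weak lower semicontinuity of $v\mapsto\af(v,v)$, and conclude strong convergence from convergence of the equivalent $\af$-norm together with $z^t\weaklim\dot y$ --- but the mechanism for the $\limsup$ bound is genuinely different. The paper cross-tests the perturbed and unperturbed inequalities \eqref{eq:energy_compact} with each other's solutions ($\ph=y$ at time $t$, $\ph=y^t$ at time $0$), which after division by $t^2$ yields in one step the closed-form bound $\af(z^t,z^t)\le\mathfrak B(t)$; the limit $\mathfrak B(0)$ is then compared with $\af(\dot y,\dot y)$ by testing the limiting inequality \eqref{eq:limiting} with $\ph=2\dot y$ (legitimate because $\tilde S_y(\Kc)$ is a cone containing $\dot y$). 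You instead recycle the radial-cone inequality \eqref{eq:limit_abstract-3} with test functions $\varphi_n\in C_y(\Kc)\cap kern(\tA(y))$ and invoke polyhedricity via Lemma \ref{lemma:polyhedr} to approximate $\dot y$, which forces a double limit ($t\searrow 0$ for fixed $n$, then $n\to\infty$) but avoids writing out and passing to the limit in the explicit expression $\mathfrak B(t)$. Both routes are sound: the paper's is shorter and needs no density or diagonal argument, while yours is more modular, consuming only the already-verified hypotheses (O3)(ii)--(iii), (A3)(iii) and the cone identity, and would carry over essentially unchanged to the abstract operator setting of Section \ref{sec:abstrSense}. Your handling of the strong-$L_{(2^*)'}$ versus weak-$L_{2^*}$ pairing for the nonlinear term and of the $t$-dependent test functions in (O3)(ii) is exactly right.
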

\begin{proof}
We test the variational inequality \eqref{eq:energy_compact} with $\ph=y^t$
and for $t=0$ with $\ph=y$. Adding both inequalities yields
$$
	\langle\tA_t(y^t)-\tA(y),y^t-y\rangle\leq 0.
$$
Dividing by $t^2$ and rearranging the terms we obtain
by setting $z^t:= (y^t-y)/t$
\begin{equation}\label{eq:eq_yt_y0_}
	\begin{split}
	&\af(z^t, z^t)\\
	&\le  -\int_\Omega   \frac{A(t)-I}{t}\nabla  y^t \cdot \nabla z^t\dx
		-\int_\Omega \lambda\frac{\xi(t)-1}{t} y^t z^t \dx\\
  	&\quad-\int_\Omega \left( \frac{\xi(t)-1}{t}w^t_{X}(x,y^t+\psi_X^t) + \frac{w^t_{X}(x,y^t+\psi_X^t)-w(x,y^t+\psi_X^t)}{t}\right)z^t\dx\\
		&\quad-\int_\Omega \frac{w(x,y^t+\psi_X^t)-w(x,y+\psi)}{t}z^t \dx\\
		&\quad- \int_\Omega \frac{A(t)\nabla\psi_X^t-\nabla\psi}{t}\cdot \nabla z^t\dx
		- \int_\Omega \lambda\frac{\xi(t)\psi_X^t-\psi}{t}z^t\dx\\
		&\quad=:\frak B(t).
\end{split}
\end{equation}
The known convergence properties shows as $t\searrow 0$ for a subsequence
\begin{align*}
	\frak B(t)\to{}&\underbrace{-\langle\tA'(y),\dot y\rangle
		-\int_\Omega \partial_y w(x,y+\psi)|\dot y|^2\dx
		- \int_\Omega \nabla\dot\psi_X\cdot\nabla \dot y\dx
		- \int_\Omega \lambda\dot\psi_X\dot y\dx}_{=:\frak B(0)}.
\end{align*}
However testing \eqref{eq:limiting} with $\ph=2\dot y\in \tilde S_y(K)$
we also obtain
$\langle \partial \tA(y)\dot y, \dot y \rangle_{H^1} \ge - \langle \tA'(y), \dot y\rangle_{H^1}$
which is precisely
\begin{equation*}
	\af(\dot y,\dot y)\geq \frak B(0).
\end{equation*}
All in all we get
\begin{align}
	\limsup_{t\searrow 0} \af(z^t, z^t)
	\leq \limsup_{t\searrow 0} \frak B(t)
	=\frak B(0)\leq \af(\dot y,\dot y).
	\label{limsupEst}
\end{align}
The weak convergence $z^t\weaklim \dot y$ in $H^1(\Omega)$
implies $\liminf_{t\searrow 0}\af(  z^t , z^t)\ge \af(\dot y,\dot y)$.
Together with \eqref{limsupEst} this gives
$\af(z^t,z^t)\to \af(\dot y,\dot y)$ as $t\searrow 0$.
This finishes the proof.
\end{proof}

\begin{remark}
\label{rmk:VIL}
	If we assume that
  \begin{align}
  	\dot w_{X}(x, y ) :=  \Tb_0(x, y )\cdot X(x)+\Tb_1(x, y ):\partial X(x)
  	\label{assumptionDotW}
  \end{align}
  for functions
	$\Tb_0(\cdot, \cdot):\Omega\times \R \rightarrow \R^{d}$
	and $\Tb_1(\cdot, \cdot):\Omega\times \R \rightarrow \R^{d\times d}$
	we may rewrite the
	variational inequality in \eqref{eq:limiting}
	by using  Lemma \ref{lem:convergence_comp}
	as
  \begin{align*}
    &\af( \dot y,  \ph - \dot y ) +
    		\int_\Omega \partial_y w(x,y+\psi)\dot y(\ph - \dot y)\dx \\
    	&\quad\ge \int_\Omega \mathfrak L_1(x,y+\psi;\ph - \dot y) : \partial X  
  	  	+ \mathfrak L_0(x,y+\psi;\ph - \dot y)\cdot X \dx\\
    	&\qquad - \af( \dot \psi,  \ph - \dot y ) + 
				\int_\Omega \partial_y w(x,y+\psi) \dot \psi(\ph - \dot y)\dx,
  \end{align*}
  where we use the abbreviations
  \begin{align*}
    \mathfrak L_1(x,y+\psi;\ph)  := & -  \left( \nabla (y+\psi) \cdot \nabla \ph
    	+ \Big(\lambda(y+\psi) +  w(x,y+\psi)\Big)\ph \right) I \\
    	& + \nabla \ph \otimes \nabla (y+\psi) + \nabla (\psi+y) \otimes \nabla \ph
    	-\Tb_1(x,y+\psi)\ph,\\
  	\mathfrak L_0(x,y+\psi;\ph)  := & -\Tb_0(x,y+\psi)\ph.
	\end{align*}
\end{remark}

\subsection{Limiting system for the material derivative}
So far we have derived an equation for $\dot y$.
Since we are interested in the original problem \eqref{eq:VI_weak},
we may now use Theorem \ref{thm:weakMatDer} and
the transformation $y= u - \psi$ to obtain the material 
derivative equation for \eqref{eq:VI_weak}. It is 
clear that $\dot y = \dot u - \dot\psi_X$ and we conclude with the following result:
\begin{corollary}
\label{cor:dotU}
	Under the assumptions (A1)-(A3)
  the material deriative $\dot u=\dot u(X)$ of solutions of the perturbed problem to
 	\eqref{eq:VI_weak} in direction $X\in C^1_c(D, \R^d)$ exists and is 
	given as the solution of the following variational inequality:
  \begin{align}\label{eq:limiting2}
  	\left\{
   	\begin{aligned}
   		&\dot u\in S_u^X(K_\psi)\text{ and }\forall \ph\in S_u^X(K_\psi):\\
   		&\af( \dot u,  \ph - \dot u )
	   		+ \int_\Omega  \partial_y w(x,u)\dot u(\ph - \dot u)\dx\\
	   	&\quad
	   		\geq -\int_\Omega A'(0)\nabla u\cdot\nabla(\ph - \dot u)
	   		+ \xi'(0)\big(\lambda u+w(x,u)\big)(\ph - \dot u)\dx\\
	   	&\qquad
	   		-\int_\Omega\dot w_{X}(x,u)(\ph-\dot u)\dx,
   	\end{aligned}
   	\right.
  \end{align}
 	where 
  $$
  	S_u^X(K_\psi) := T_{u}(K_\psi)\cap kern(\C A(u))+\dot\psi_X.
  $$
	In particular under the additional assumption in Remark \ref{rmk:VIL}
  \begin{align*}
   		&\af( \dot u,  \ph - \dot u )
	   		+ \int_\Omega  \partial_y w(x,u)\dot u(\ph - \dot u)\dx \\
 			&\qquad\ge \int_\Omega \mathfrak L_1(x,u;\ph - \dot u) : \partial X  
      	+ \mathfrak L_0(x,u;\ph - \dot u)\cdot X \dx.
  \end{align*}
\end{corollary}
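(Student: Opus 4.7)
}
The plan is to translate the variational inequality for $\dot y$ given in Theorem \ref{thm:weakMatDer} back into one for $\dot u$ by exploiting the shift $u=y+\psi$ at both the level of the states and the level of the admissible cones. First I would \emph{define} $\dot u:=\dot y+\dot\psi_X$ and observe that by Corollary \ref{cor:strongMatDer} together with Assumption (A3)(iv) the difference quotient $(u^t\circ\Phi_t-u)/t=(y^t-y)/t+(\psi_X^t-\psi)/t$ converges strongly in $H^1(\Omega)$ to $\dot u$, which identifies $\dot u$ as the (strong) material derivative of $u$ in direction $X$.

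Next I would identify the cone $S_u^X(K_\psi)$. Since the map $v\mapsto v-\psi$ is an affine bijection from $K_\psi$ onto $K=K_0(\Omega)$ sending $u$ to $y$, one has $C_u(K_\psi)=C_y(K)$, hence $T_u(K_\psi)=T_y(K)$; moreover $\langle\tilde{\mathcal A}(y),\cdot\rangle=\langle\mathcal A(y+\psi),\cdot\rangle=\langle\mathcal A(u),\cdot\rangle$, so $\mathrm{kern}(\tilde{\mathcal A}(y))=\mathrm{kern}(\mathcal A(u))$. Therefore $\tilde S_y(K)=T_u(K_\psi)\cap\mathrm{kern}(\mathcal A(u))$, and the affine shift by $\dot\psi_X$ yields $\dot u=\dot y+\dot\psi_X\in S_u^X(K_\psi)$ and a bijection between admissible test functions: $\ph\in S_u^X(K_\psi)\iff\ph-\dot\psi_X\in\tilde S_y(K)$.

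The central step is then a direct substitution. For arbitrary $\ph\in S_u^X(K_\psi)$, test the inequality of Theorem \ref{thm:weakMatDer} with $\ph-\dot\psi_X\in\tilde S_y(K)$; note that $\ph-\dot\psi_X-\dot y=\ph-\dot u$. Using the explicit formula for $\partial\tilde{\mathcal A}(y)\dot y$, its action rewrites as $\af(\dot u,\ph-\dot u)+\int_\Omega\partial_y w(x,u)\dot y(\ph-\dot u)\dx$, and writing $\dot y=\dot u-\dot\psi_X$ splits the semilinear term into a $\dot u$-part plus a $\dot\psi_X$-part. On the right-hand side, inserting the explicit expression for $\tilde{\mathcal A}'(y)$ (and grouping $A'(0)\nabla y+A'(0)\nabla\psi=A'(0)\nabla u$, $\xi'(0)(\lambda y+w)+\xi'(0)\lambda\psi=\xi'(0)(\lambda u+w(x,u))$) produces exactly the terms appearing in the corollary plus a remainder $-\int_\Omega\partial_y w(x,u)\dot\psi_X(\ph-\dot u)\dx$, which cancels precisely against the $\dot\psi_X$-contribution produced from $\partial\tilde{\mathcal A}$. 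After this cancellation the claimed inequality for $\dot u$ appears.

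For the specialisation under the Ansatz \eqref{assumptionDotW} of Remark \ref{rmk:VIL}, I would simply invoke that remark: substituting the representation of $\dot w_X$ in the right-hand side and using Lemma \ref{lem:convergence_comp} to rewrite the $A'(0)$- and $\xi'(0)$-terms as $\mathfrak L_1:\partial X+\mathfrak L_0\cdot X$ gives the second form. I do not expect any genuine obstacle here—the only place where one has to be careful is the bookkeeping of which $\dot\psi_X$-terms belong to $\partial\tilde{\mathcal A}(y)\dot y$ and which to $\tilde{\mathcal A}'(y)$, since their cancellation is exactly what converts the $\dot y$-inequality into a clean $\dot u$-inequality independent of the auxiliary quantity $\dot\psi_X$.
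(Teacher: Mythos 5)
Your proposal is correct and follows essentially the same route as the paper: apply Theorem \ref{thm:weakMatDer}, identify $S_u^X(K_\psi)=\tilde S_y(\Kc)+\dot\psi_X$ via $T_u(K_\psi)=T_y(\Kc)$ and $kern(\C A(u))=kern(\tA(y))$, and substitute $\ph-\dot\psi_X$ as test function. Your explicit tracking of the cancellation of the $\int_\Omega\partial_y w(x,u)\dot\psi_X(\ph-\dot u)\dx$ terms between $\partial\tA(y)\dot y$ and $\tA'(y)$ is exactly the computation the paper compresses into ``which is precisely the inequality in \eqref{eq:limiting2}''.
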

\begin{proof}
	We obtain from Theorem \ref{thm:weakMatDer} that
	$\dot u\in \tilde S_y(\Kc)+\dot\psi_X$ and for all
	$\ph\in\tilde S_y(\Kc)+\dot\psi_X$:
	$$
	  \langle \partial \tA(u-\psi)(\dot u-\dot\psi_X), \varphi - \dot u \rangle_{H^1} \ge - \langle \tA'(u-\psi), \varphi - \dot u\rangle_{H^1},
	$$
	which is precisely the inequality in \eqref{eq:limiting2}.
	
	It remains to show $S_u^X(K_\psi)=\tilde S_y(\Kc)+\dot\psi_X$
	which is equivalent to $T_u(K_\psi)\cap kern(\C A(u))=T_y(K)\cap kern(\tA(y))$.
	Indeed, by definition \eqref{tildeAdef} we find
	$$
		kern(\C A(u))=kern(\tA(y))
	$$
	as well as by \eqref{clarke_cone}-\eqref{normal_cone}
	\begin{align*}
		T_u(K_\psi)=T_{u-\psi}(K)
			= T_y(K)
	\end{align*}
\end{proof}
Note that we get the following characterisation of $S_u^X$ by using Theorem\ref{thm:TN} and the definition in \eqref{SKdef}:
\begin{align*}
	\ph\in S_u^X(K_\psi)
	&\quad\Leftrightarrow\quad
		\ph-\dot\psi_X\in T_{u}(K_\psi)\cap kern(\C A(u))\\
	&\quad\Leftrightarrow\quad
		\begin{cases}
			&\ph\in H^1(\Omega)\text{ with }\ph\leq\dot\psi_X\text{ q.e. on }\{u=\psi_\Omega\},\\
			&\langle\C A(u),\ph-\dot\psi_X\rangle=0.
		\end{cases}
\end{align*}
Moreover under an additional assumptions we obtain the subsequent translation property:
\begin{lemma}
\label{lemma:dotPsiInc}
	Suppose that $u,\psi\in H^2(\Omega)$ and
	let $\zeta\in H^1(\Omega)$ be with
	\begin{align*}
		\tilde\zeta=0\text{ q.e. on the coincidence set }\{x\in\ol\Omega:\,\tilde u(x)=\tilde \psi(x)\},
	\end{align*}
	where $\tilde\zeta$, $\tilde u$ and $\tilde\psi$ denote quasi-continuous representatives for $\zeta$, $u$ and $\psi$.
	Then we have
	$$
		\pm\zeta\in T_{u}(K_\psi)\cap kern(\C A(u)).
	$$
	In particular
	\begin{align}
		\zeta+S_u^X(K_\psi)=\C S_u^X(K_\psi).
		\label{eqn:pmInc}
	\end{align}
\end{lemma}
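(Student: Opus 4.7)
The plan is to verify the two claimed memberships $\pm\zeta\in T_u(K_\psi)$ and $\pm\zeta\in\text{kern}(\C A(u))$ separately, invoking the explicit characterisations of the tangent and normal cones of $K_\psi$ from Theorem \ref{thm:TN}. The translation identity \eqref{eqn:pmInc} will then follow by an elementary convex cone argument.

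For the tangent cone inclusion I would appeal directly to \eqref{tangSpace}: since $T_u(K_\psi)$ consists of those $v\in H^1(\Omega)$ whose quasi-continuous representative satisfies $\tilde v\leq 0$ q.e.\ on $\{\tilde u=\tilde\psi\}$, and by assumption $\tilde\zeta=0$ q.e.\ on this coincidence set, both $\tilde\zeta\leq 0$ and $-\tilde\zeta\leq 0$ hold q.e.\ there. Hence $\pm\zeta\in T_u(K_\psi)$.

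The kernel inclusion is the main point. Since $u$ solves the VI \eqref{eq:VI_weak}, one has $\langle\C A(u),\varphi-u\rangle\geq 0$ for all $\varphi\in K_\psi$, so $-\C A(u)\in N_u(K_\psi)$. By \eqref{normalSpace}, there exists a positive Radon measure $\mu\in M_+(\ol\Omega)$ with $\mu(\{\tilde u<\tilde\psi\})=0$, so that $\mu$ is supported on the coincidence set $\{\tilde u=\tilde\psi\}$, and the identification lemma (Lemma \ref{lemma:identification}) gives $\langle -\C A(u),\varphi\rangle=\int_{\ol\Omega}\tilde\varphi\,d\mu$ for all $\varphi\in H^1(\Omega)$. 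The classical fact from potential theory that positive measures arising from elements of $H^1(\Omega)^*$ do not charge sets of $H^1$-capacity zero, combined with $\tilde\zeta=0$ quasi-everywhere on $\text{supp}(\mu)\subseteq\{\tilde u=\tilde\psi\}$, yields $\int_{\ol\Omega}\tilde\zeta\,d\mu=0$ and hence $\langle\C A(u),\zeta\rangle=0$. The $H^2$-regularity of $u$ and $\psi$ ensures well-behaved quasi-continuous representatives (and typically is exploited to make the measure $\mu$ more explicit via integration by parts), but is not needed for this argument itself.

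Finally, \eqref{eqn:pmInc} follows from cone arithmetic: $C:=T_u(K_\psi)\cap\text{kern}(\C A(u))$ is a convex cone, being the intersection of a convex cone with a linear subspace, and therefore closed under addition. Since both $\zeta$ and $-\zeta$ lie in $C$, we have $\zeta+C\subseteq C$ and $C=(\zeta+C)+(-\zeta)\subseteq\zeta+C$, so $\zeta+C=C$; translating by $\dot\psi_X$ on both sides yields $\zeta+S_u^X(K_\psi)=S_u^X(K_\psi)$. The main subtlety I anticipate is the capacity-theoretic step that $\mu$ does not charge polar sets, which I would either cite from the potential theory literature or reduce to the case where $\C A(u)$ is represented by an $L^2$-function plus a boundary contribution (using the $H^2$-regularity of $u,\psi$), in which case absolute continuity with respect to the relevant Lebesgue/surface measures immediately rules out any contribution from polar sets.
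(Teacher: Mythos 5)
Your proof is correct, and for the key step $\pm\zeta\in kern(\C A(u))$ it takes a genuinely different route from the paper. The paper works with the shifted operator $\tA(y)$, $y=u-\psi$, and uses the $H^2(\Omega)$-regularity to pass to the pointwise (strong) formulation: $\tA(y)=0$ a.e.\ on the non-coincidence set $\{u<\psi\}$, whence $\langle\tA(y),\ph\rangle=0$ for any $\ph$ vanishing a.e.\ on $\{u=\psi\}$, and then tests with $\ph=\pm\zeta$. You instead stay at the level of the normal cone: $-\C A(u)\in N_u(K_\psi)$, which by \eqref{normalSpace} and Lemmas \ref{lemma:identification}--\ref{lemma:muInt} is represented by a positive Radon measure $\mu$ on $\ol\Omega$ that does not charge sets of zero capacity and satisfies $\mu(\{\tilde u<\tilde\psi\})=0$ (and $\mu(\{\tilde u>\tilde\psi\})=0$ since that set is polar), so $\langle\C A(u),\zeta\rangle=\int_{\{\tilde u=\tilde\psi\}}\tilde\zeta\,\mathrm d\mu=0$ because $\tilde\zeta=0$ q.e.\ there. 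The capacity fact you flag as the main subtlety is exactly Claim 2 in the paper's proof of Lemma \ref{lemma:muInt}, so it is already available and needs no outside citation. What your route buys is that the $H^2$-regularity hypothesis becomes superfluous for this lemma (you correctly note this), and it sidesteps the boundary term that the paper's integration-by-parts argument quietly suppresses; what the paper's route buys is that it avoids the quasi-continuity/measure machinery entirely once the extra regularity is assumed, which is the situation in which the lemma is later applied (Theorem \ref{thm:shapeDer}). The tangent-cone inclusion via \eqref{tangSpace} and the concluding cone arithmetic for \eqref{eqn:pmInc} are identical to the paper's.
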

\begin{proof}
	It is clear from the assumption that $\pm\tilde\zeta=0$
	q.e. on the coincidence set $\{u=\psi\}$.
	Thus $\pm\zeta\in T_{u}(K_\psi)$.
	Furthermore $y=u-\psi$ satisfies the variational inequality (see \eqref{eq:energy_compact} with $t=0$)
	$$
		\langle\tA(y),\ph-y\rangle\geq 0\quad
		\text{for all $\ph\in H^1(\Omega)$ and $\ph\leq 0$ a.e. in $\Omega$.}
	$$
	From the $H^2(\Omega)$-regularity of $u$ and $\psi$
	we deduce
	that (in a pointwise formulation)
	$\tA(y)=0$ a.e. in $\{x\in\Omega:\,u(x)<\psi(x)\}$.
	In particular we see that
	$$
		\langle\tA(y),\ph\rangle=0
		\text{ for all }\ph\in H^1(\Omega)\text{ with }\{x\in\Omega:\,\ph(x)=0\}\supseteq \{x\in\Omega:\,u(x)=\psi(x)\}\text{ a.e.}
	$$
	Testing with $\ph=\pm\zeta$ yields $\pm\zeta\in kern(\tA(y))=kern(\C A(u))$.
	
	Finally, $\zeta\in T_{u}(K_\psi)\cap kern(\C A(u))$ implies $\zeta+S_u^X(K_\psi)\subseteq S_u^X(K_\psi)$,
	and $-\zeta\in T_{u}(K_\psi)\cap kern(\C A(u))$ implies $\zeta+S_u^X(K_\psi)\supseteq S_u^X(K_\psi)$.
\end{proof}
In the following $\psi_\Omega$ is referred to as a static obstacle
if there exists a fixed function $\psi\in H^2(D)$ such that $\psi_{\tilde\Omega}=\psi|_{\tilde\Omega}$
for all Lipschitz domains $\tilde\Omega\subseteq D$.
\begin{remark}
		Let $X\in C^1_c(D,\R^d)$.
		Suppose that $\psi_\Omega$ is a static obstacle, $u\in H^2(\Omega)$
		and $\{X=\mathbf 0\}\supseteq \{\tilde u=\tilde \psi_\Omega\}$ q.e. in $\ol\Omega$.
		Then $\dot\psi_X=\nabla\psi_\Omega\cdot X$ and the assumptions from Lemma \ref{lemma:dotPsiInc} are satisfied
		for $\zeta=\dot\psi_X$ and we obtain
		\begin{align*}
			\pm\dot\psi_X\in T_{u}(K_\psi)\cap kern(\C A(u)).
		\end{align*}
		In particular
		$$
			S_u^X(K_\psi) = T_{u}(K_\psi)\cap kern(\C A(u))
		$$
		and
		\begin{align*}
			\ph\in S_u^X(K_\psi)
			\quad\Leftrightarrow\quad
				\begin{cases}
					&\ph\in H^1(\Omega)\text{ with }\ph\leq 0\text{ q.e. on }\{u=\psi_\Omega\},\\
					&\langle\C A(u),\ph\rangle=0.
				\end{cases}
		\end{align*}
\end{remark}

%
%

\subsection{Limiting system for the state-shape derivative}
The \emph{state shape derivative} of $u$ at $\Omega$ in direction $X\in C^1_c(D, \R^d)$ is defined by 
\begin{align}
	u' =u'(X): = \dot u - \partial_X u \quad \text{ on } \Omega
	\label{eqn:shapeDerivative}
\end{align}
where $u$ solves \eqref{eq:VI_weak}, $\dot u$ solves \eqref{eq:limiting2} and $\partial_X u := \nabla u\cdot X$. It 
is clear that $u'\in L_2(\Omega)$.
Thus in general the state shape derivative is 
less regular than the material derivative. Another important observation is that
the boundary conditions imposed on $\dot u$ on $\partial \Omega$ are not carried over to $ u'$. 

\begin{lemma}\label{lem:state_shape}
Let $X\in C^1_c(D,\R^d)$ be a vector field satisfying $X\cdot n=0$ on 
$\partial \Omega$.  Then the state shape derivative vanishes identically, 
that is, $u'(X)=0$ a.e. on $\Omega$. 
\end{lemma}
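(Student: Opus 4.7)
The plan is to exploit the observation, already recorded in the paper just after \eqref{eq:per_yt}, that the tangency condition $X\cdot n=0$ on $\partial\Omega$ forces the flow $\Phi_t$ to preserve $\Omega$. Once this is in hand, the perturbed state coincides with the unperturbed one, and the material derivative reduces to the convective derivative $\partial_X u$.

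As a first step I would verify that $X\in C^1_c(D,\R^d)$ together with $X\cdot n=0$ on $\partial\Omega$ implies $\Phi_t(\Omega)=\Omega$ for all sufficiently small $t\ge 0$. This is a standard fact about flows of $C^1$-vector fields tangent to a Lipschitz boundary: an integral curve of $X$ starting in $\Omega$ cannot leave $\Omega$ since its velocity at $\partial\Omega$ is tangential. Consequently the perturbed variational inequality \eqref{eq:per_yt} is posed on the same domain $\Omega$, so by uniqueness
$$
y_t=y_0=y,\qquad \psi_{\Omega_t}=\psi_\Omega=\psi,
$$
for every $t\in[0,t^*]$ with $t^*$ small enough.

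With the pullback notation of \eqref{abbrev} this yields
$$
y^t=y_t\circ\Phi_t=y\circ\Phi_t,\qquad \psi_X^t=\psi\circ\Phi_t .
$$
Using the regularity of $\Phi_t$ recorded in Lemma \ref{lemma:phit}, the difference quotients of these compositions converge strongly in $L_2(\Omega)$ to $\nabla y\cdot X$ and $\nabla\psi\cdot X$ respectively; combined with the strong convergence of $(y^t-y)/t$ to $\dot y$ in $H^1(\Omega)$ from Corollary \ref{cor:strongMatDer} and with Assumption (A3)(iv), this identifies
$$
\dot y=\nabla y\cdot X,\qquad \dot\psi_X=\nabla\psi\cdot X\quad\text{a.e. in }\Omega.
$$

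Finally, using $u=y+\psi$ and the identity $\dot u=\dot y+\dot\psi_X$ stated before Corollary \ref{cor:dotU}, I would conclude
$$
\dot u=\nabla y\cdot X+\nabla\psi\cdot X=\nabla u\cdot X=\partial_X u,
$$
so that $u'(X)=\dot u-\partial_X u=0$ a.e. in $\Omega$, as claimed. The only genuinely non-routine step is the invariance $\Phi_t(\Omega)=\Omega$ under the tangency hypothesis; once that is secured, everything reduces to applying the chain rule to pullbacks along $\Phi_t$.
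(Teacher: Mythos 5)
Your proposal is correct and follows essentially the same route as the paper: the key observation is that the tangency of $X$ to $\partial\Omega$ forces $\Phi_t(\Omega)=\Omega$, hence the perturbed state equals the unperturbed one, so $u^t=u\circ\Phi_t$ and Lemma \ref{lemma:phit}~(ii) identifies $\dot u=\partial_X u$. The only cosmetic difference is that you pass through the decomposition $u=y+\psi$ and treat $\dot y$ and $\dot\psi_X$ separately before recombining, whereas the paper applies the same chain-rule argument directly to $u$; both hinge on the same flow-invariance step, which the paper also asserts without detailed proof.
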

\begin{proof}
        The $X$-flow $\Phi_t$ leaves the domain $\Omega$ unchanged, i.e., 
        $\Phi_t(\Omega) = \Omega$ for all $t\in[0,\tau]$.  Consequently, $u_t = u(\Omega_t) = u(\Omega) = u$ and thus
        $ u^t = u_t\circ \Phi_t=u\circ\Phi_t$ for all $t\in[0,\tau]$.
        Hence by Lemma \ref{lemma:phit} (ii) we may calculate the material
        derivative $\dot u$ as
	\begin{align*}
		\frac{u^t-u}{t}=
		\frac{u\circ\Phi_t-u}{t}
		\to
		\partial_X u\quad\text{strongly in }L_2(\Omega).
	\end{align*}
	Thus $\dot u=\partial_X u$ and consequently $u'=0$.
\end{proof}

Now we are prepared to prove the main result of this section
which gives a simplified variational inequality for the
state-shape derivative $u'$ under certain conditions.
To derive this result we will assume the enhanced regularity $u\in H^2(\Omega)$.
Preliminarily we observe
from Corollary \ref{cor:dotU} and by using the relation \eqref{eqn:shapeDerivative} that
\begin{align}
  	\left\{
   	\begin{aligned}
   		&u'\in \hat S_u^X(K_\psi)\text{ and }\forall \ph\in\hat S_u^X(K_\psi):\\
   		&\af( u',  \ph - u')
	   		+ \int_\Omega  \partial_y w(x,u) u'(\ph - u')\dx\\
	   	&\quad
	   		\geq -\int_\Omega A'(0)\nabla u\cdot\nabla(\ph - u')
	   		+ \xi'(0)\big(\lambda u+w(x,u)\big)(\ph - u')\dx\\
	   	&\quad\quad
		   	-\af( \partial_X u,  \ph - u')
		   	-\int_\Omega  \partial_y w(x,u) \partial_X u(\ph - u')\dx,
   	\end{aligned}
   	\right.
   	\label{eqn:uDerVI}
\end{align}
where 
$$
	\hat S_u^X(K_\psi) := T_{u}(K_\psi)\cap kern(\C A(u)) + \dot\psi_X-\partial_X u.
$$
We notice that in general the cone $\widehat{\mathcal{S}}_X( K)$
depend on the vector field $X$.
In the case of a static obstacle problem (see Remark \ref{rmk:setting} (i))
we derive the following result:
\begin{theorem}
\label{thm:shapeDer}
	Suppose that (A1)-(A3), \eqref{assumptionDotW} and $u\in H^2(\Omega)$ hold.
	Furthermore let $\psi_\Omega$ be a static obstacle function.
	
	\red Then $\pm(\dot\psi_X-\partial_X u)\in T_{u}(K_\psi)\cap kern(\C A(u))$ and
	we have\black
	\begin{align}
		\hat S_u^X(K_\psi)=T_{u}(K_\psi)\cap kern(\C A(u))
		\label{eqn:Sidentity}
	\end{align}
	\red In particular $\hat S_u^X(K_\psi)$ is independent of $X$
	and we write
	$
		S_u(K_\psi)=\hat S_u^X(K_\psi).
	$
	Furthermore \black the
	state shape derivative
	is the unique solution of
	\begin{align}\label{eq:state-shape-derivative}
	\left\{
	\begin{aligned}
 		&u'\in S_u(K_\psi)\text{ and }\forall \ph\in S_u(K_\psi):\\
		&\af( u',  \ph - u' ) + \int_{\Omega} \partial_y w(x,u)  u'(\ph -  u')\dx\\
		&\quad\ge \int_{\Gamma} \mathfrak S_1(x,u;\ph - u')n\cdot n (X\cdot n)\ds,
	\end{aligned}
	\right.
	\end{align}
	where 
	\begin{align*}
		\mathfrak S_1(x,u;\ph) :=  \mathbf{\mathfrak{L}}_1(x,u;\ph)  - \nabla u\otimes \nabla \ph.
	\end{align*}
	with $\mathbf{\mathfrak{L}}_1$ from Remark \ref{rmk:VIL}.
\end{theorem}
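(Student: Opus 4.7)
The plan is to first identify the test cone $\hat S_u^X(K_\psi)$ with the $X$-independent set $T_u(K_\psi)\cap\text{kern}(\mathcal A(u))$, and then reduce the bulk variational inequality \eqref{eqn:uDerVI} to the boundary form \eqref{eq:state-shape-derivative} using shape calculus and the $H^2$-regularity of $u$.

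For the first step, since $\psi_\Omega=\Psi|_\Omega$ is static with $\Psi\in H^2(D)$, the material derivative of the obstacle reduces to $\dot\psi_X=\nabla\psi\cdot X$, and combining with $\partial_X u=\nabla u\cdot X$ one obtains $\zeta:=\dot\psi_X-\partial_X u=\nabla(\psi-u)\cdot X\in H^1(\Omega)$. Because $u\in K_\psi$ means $u-\psi\le 0$ a.e.\ and $u-\psi\in H^2(\Omega)$, Stampacchia's classical result yields $\nabla(u-\psi)=0$ a.e.\ on $\{u=\psi\}$, which transfers to the quasi-continuous level: $\tilde\zeta=0$ q.e.\ on the coincidence set $\{\tilde u=\tilde\psi\}$. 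Lemma \ref{lemma:dotPsiInc} therefore gives $\pm\zeta\in T_u(K_\psi)\cap\text{kern}(\mathcal A(u))$, and the translation identity \eqref{eqn:pmInc} yields the equality $\hat S_u^X(K_\psi)=T_u(K_\psi)\cap\text{kern}(\mathcal A(u))=:S_u(K_\psi)$, which is indeed independent of $X$.

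For the VI itself, I would substitute $u'=\dot u-\partial_X u$ into Corollary \ref{cor:dotU}. Using the structural assumption \eqref{assumptionDotW} for $\dot w_X$ and the expressions for $A'(0)$ and $\xi'(0)$ from Lemma \ref{lem:convergence_comp}, one can regroup the right-hand side terms of \eqref{eqn:uDerVI} into the $\mathfrak L_1,\mathfrak L_0$-form from Remark \ref{rmk:VIL}, now evaluated at the argument $\ph-u'$ rather than $\ph-\dot u$. The extra contributions $-\af(\partial_X u,\ph-u')$ and $-\int_\Omega\partial_y w(x,u)\partial_X u(\ph-u')\,\mathrm dx$ are precisely what converts $\mathfrak L_1$ into $\mathfrak S_1=\mathfrak L_1-\nabla u\otimes\nabla\ph$, because $\partial_X u=\nabla u\cdot X$ supplies one of the two tensor contributions appearing in $\mathfrak L_1$ after integration by parts. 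The kernel property $\langle\mathcal A(u),\ph-u'\rangle_{H^1}=0$, valid since both $\ph$ and $u'$ lie in $S_u(K_\psi)\subseteq\text{kern}(\mathcal A(u))$, is then used to cancel the remaining interior contributions.

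The main obstacle is the final step: converting the resulting bulk integrals involving $\partial X$ and $X$ into a boundary integral on $\Gamma$. This requires a careful integration-by-parts argument exploiting $u\in H^2(\Omega)$ and the Euler relation $-\Delta u+\lambda u+w(x,u)=0$ which holds pointwise on the non-coincidence region by $H^2$-regularity. Since $X\in C^1_c(D,\R^d)$, the integration by parts produces only terms on $\Gamma=\partial\Omega$, and by Lemma \ref{lem:state_shape} the entire functional must vanish whenever $X\cdot n\equiv 0$ on $\Gamma$, so the boundary density can depend only on $X\cdot n$; a direct computation identifies this density as $\mathfrak S_1(x,u;\ph-u')n\cdot n$. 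Finally, uniqueness of $u'$ in $S_u(K_\psi)$ follows exactly as in the uniqueness proof of Theorem \ref{thm:weakMatDer}: for two solutions $u_1',u_2'$, testing crosswise and adding yields $\af(u_1'-u_2',u_1'-u_2')+\int_\Omega\partial_y w(x,u)|u_1'-u_2'|^2\,\mathrm dx\le 0$, and since $\partial_y w\ge 0$ by the convexity Assumption (A1)(i), the coercivity of $\af$ forces $u_1'=u_2'$.
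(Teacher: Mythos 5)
Your proposal is correct and follows essentially the same route as the paper's proof: the cone identification via Lemma \ref{lemma:dotPsiInc} applied to $\zeta=\dot\psi_X-\partial_X u$ (which vanishes q.e.\ on the coincidence set for a static obstacle), the algebraic rewriting of \eqref{eqn:uDerVI} into the $\mathfrak S_0,\mathfrak S_1$ form using $\nabla(\partial_X u)=(\partial X)^\top\nabla u+(\partial^2 u)X$, the use of Lemma \ref{lem:state_shape} for tangential fields to force the bulk functional to vanish and yield a pointwise divergence identity, and the normal/tangential splitting plus integration by parts to reduce everything to the boundary term $\mathfrak S_1 n\cdot n\,(X\cdot n)$. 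The only deviations are cosmetic: the paper settles uniqueness by referring back to Theorem \ref{thm:weakMatDer} rather than by your (equally valid) direct coercivity argument, and the kernel property $\langle\mathcal A(u),\ph-u'\rangle_{H^1}=0$ that you invoke to ``cancel the remaining interior contributions'' is not actually needed --- the regrouping into $\mathfrak L_1-\nabla u\otimes\nabla\ph$ is pure product-rule algebra.
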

\begin{proof}
	By using the assumption $\dot\psi_X=\nabla\psi_\Omega\cdot X$ we find
	on the coincidence set $\{u=\psi_\Omega\}$
	(here we resort to quasi-continuous representants):
	$$
		\dot\psi_X-\partial_X u=\dot\psi_X-\partial_X \psi_\Omega=0.
	$$
	Lemma \ref{lemma:dotPsiInc} applied to $\zeta=\dot\psi_X-\partial_X u$ yields
	$\pm(\dot\psi_X-\partial_X u)\in S_u(K_\psi)$ and therefore
	\eqref{eqn:Sidentity}.
	
	Furthermore by using the notation in Remark \ref{rmk:VIL}
	and the identity (note that $u\in H^2(\Omega)$ by assumption)
	$$
		\nabla(\partial_X u)=(\partial X)^\top(\nabla u)+(\partial^2 u)X,
	$$
	the variational inequality in \eqref{eqn:uDerVI}
	rewrites to
	$u'\in S_u(K_\psi)$ and for all $\ph\in S_u(K_\psi)$:
	\begin{equation}\label{eq:limiting_y_dot}
		\begin{split}
			\af( u',  \ph - u' ) + &  \int_\Omega  \partial_y w(x,u)  u'(\ph -  u')\dx \\
			 \ge & \int_\Omega \mathfrak S_1(x,u;\ph - u') : \partial X + 
			 \mathfrak S_0(x,u;\ph - u') \cdot X \dx,
	\end{split}
	\end{equation}
	where
	\begin{align*}
	\mathfrak S_0(x,u,\ph) & := \mathfrak L_0(x,u,\ph)  - \partial_y w(x,u) \ph\nabla u
		-(\partial^2 u)\nabla\ph,\\
	\mathfrak S_1(x,u,\ph) & := \mathfrak L_1(x,u,\ph) - \nabla u\otimes \nabla 
	\ph. 
	\end{align*}
	Picking any vector field $X\in C^1_c(D, \R^d)$ with $X\cdot n=0$ on $\Gamma$
	we know from 
	Lemma~\ref{lem:state_shape} that $u'(\pm X) =0$ and
	it follows from \eqref{eq:limiting_y_dot}
	 \begin{equation}\label{eq:F} 
		\begin{split}
	  \int_\Omega \mathfrak S_1(x,u;\tilde\ph) : \partial X + \mathfrak S_0(x,u;\tilde\ph)
	   \cdot X \dx =0 
	\end{split}
	\end{equation} 
	for all $\tilde\ph\in S_u(K_\psi)$. 
	Then integrating by parts in \eqref{eq:F} shows the pointwise identity
	\begin{align}
		-\divv(\mathfrak S_1(x,u(x);\tilde\ph(x))) + \mathfrak S_0(x,u(x);\tilde\ph(x)) =0 \quad \text{ a.e. on } \Omega.
		\label{eqn:Spointwise}
	\end{align}
	Now for an arbitrary $X\in C_c^1(D, \R^d)$ and $\tilde\ph\in S_u(K_\psi)$
	we consider the additive splitting $X=X_n+X_T$
	for $X_n,X_T\in C_c^1(D, \R^d)$
	such that $X_n=n(X\cdot n)$
	and $X_T=X-n(X\cdot n)$ on $\Gamma$.
	Then $X_T\cdot n=0$ on $\Gamma$ and we get
	\begin{align}
		\begin{aligned}
			&\int_\Omega \mathfrak S_1(x,u;\tilde\ph) : \partial X + \mathfrak S_0(x,u;\tilde\ph)\cdot X\dx\\
			&=\underbrace{\int_\Omega \mathfrak S_1(x,u;\tilde\ph) : \partial X_T + \mathfrak S_0(x,u;\tilde\ph)\cdot X_T\dx}_{=0\text{ by \eqref{eq:F}}}\\
			&\quad\underbrace{+\int_\Omega \mathfrak S_1(x,u;\tilde\ph) : \partial X_n + \mathfrak S_0(x,u;\tilde\ph)\cdot X_n\dx}_{\text{partial integration and \eqref{eqn:Spointwise}}}\\
			&= \int_{\Gamma} \mathfrak S_1(x,u;\tilde\ph)n \cdot X_n\ds.
		\end{aligned}
		\label{eqn:Sformula}
	\end{align}
	We may test \eqref{eqn:Sformula} with $\tilde\ph=u'$
	since $u'\in S_u(K_\psi)$. Then multiplying the resulting identity with $-1$
	and exploiting linearity of $\mathfrak S_0$ and $\mathfrak S_1$ with respect to $\ph$
	yields
	\begin{align}
		\int_\Omega \mathfrak S_1(x,u;-u') : \partial X + \mathfrak S_0(x,u;-u')\cdot X\dx  = 
			\int_{\Gamma} \mathfrak S_1(x,u;-u')n \cdot X_n\ds.
		\label{eqn:Sformula2}
	\end{align}
	Now we find by letting $\tilde\ph=\ph\in S_u(K_\psi)$ be arbitrary, adding
	\eqref{eqn:Sformula} and \eqref{eqn:Sformula2}, and again exploiting linearity
	\begin{align*}
		\int_\Omega \mathfrak S_1(x,u;\ph-u') : \partial X + \mathfrak S_0(x,u;\ph-u')\cdot X\dx  = 
			\int_{\Gamma} \mathfrak S_1(x,u;\ph-u')n \cdot X_n\ds.
	\end{align*}
	In combination with \eqref{eq:limiting_y_dot} we obtain
	\eqref{eq:state-shape-derivative}.
	Uniqueness of $u'$ is implied by uniqueness of $\dot y$ (see Theorem \ref{thm:weakMatDer}).
\end{proof}

It is readily checked that 
$$
	\mathfrak S_1(x,u;\ph)n\cdot n = - \nabla_\Gamma u \cdot \nabla_\Gamma \ph - \big(\lambda u+w(x,u)\big)\ph  \quad \text{ for all } \ph \in S_u(K_\psi).
	$$
Thus we conclude this section with an explicit formula
for the shape derivative in the case of a static obstacle.
\begin{corollary}
\label{cor:stateShapeDer}
Under the assumption of Theorem \ref{thm:shapeDer}
the shape derivative $u'$ is the unique solution of the following 
variational inequality:
\begin{align*}
		u' \in S_u(K_\psi), \quad 	&	\af( u',  \ph - u' )
		+ \int_{\Omega} \partial_y w(x,u)  u'(\ph -  u')\dx \\
		&\quad\ge -\int_{\Gamma} \big[\nabla_\Gamma 
			u \cdot \nabla_\Gamma (\ph -  u') +  \big(\lambda u+w(x,u) \big)\big](X\cdot n)(\ph- u')\ds
\end{align*}
for all $\ph \in S_u(K_\psi)$.
\end{corollary}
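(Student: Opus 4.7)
The plan is to obtain the corollary as a direct algebraic reformulation of the variational inequality in Theorem \ref{thm:shapeDer}: no new estimate, density argument, or passage to the limit is needed. Once the normal-normal component $\mathfrak S_1(x,u;\varphi)n\cdot n$ on $\Gamma$ is shown to reduce to $-\nabla_\Gamma u\cdot\nabla_\Gamma\varphi-(\lambda u+w(x,u))\varphi$ for every $\varphi\in S_u(K_\psi)$, substitution into \eqref{eq:state-shape-derivative} yields the claimed inequality verbatim. This is precisely the identity asserted in the line just before the corollary statement, so the remaining task is to justify it.

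To verify that identity I would first unfold $\mathfrak S_1=\mathfrak L_1-\nabla u\otimes\nabla\varphi$ using the explicit expression for $\mathfrak L_1$ from Remark \ref{rmk:VIL}, with the argument $y+\psi$ replaced by $u$. The subtraction removes one of the two symmetric rank-one terms in $\mathfrak L_1$ and leaves
\begin{equation*}
\mathfrak S_1(x,u;\varphi)=-\bigl(\nabla u\cdot\nabla\varphi+(\lambda u+w(x,u))\varphi\bigr)I+\nabla\varphi\otimes\nabla u-\mathbf T_1(x,u)\varphi.
\end{equation*}
Then I would apply the elementary identities $In\cdot n=1$, $(a\otimes b)n\cdot n=(a\cdot n)(b\cdot n)$, together with the tangential/normal decomposition $\nabla v=\nabla_\Gamma v+(\partial_n v)n$ on $\Gamma$ (which makes sense componentwise since $u\in H^2(\Omega)$ and the boundary of $\Omega$ is Lipschitz). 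This yields
\begin{equation*}
\mathfrak S_1(x,u;\varphi)n\cdot n=-\nabla_\Gamma u\cdot\nabla_\Gamma\varphi-(\partial_n u)(\partial_n\varphi)-(\lambda u+w(x,u))\varphi+(\partial_n u)(\partial_n\varphi)-(\mathbf T_1 n\cdot n)\varphi.
\end{equation*}

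The decisive cancellation is that the two cross terms $\pm(\partial_n u)(\partial_n\varphi)$ coming respectively from $-\nabla u\cdot\nabla\varphi$ and from $\nabla\varphi\otimes\nabla u$ annihilate each other, which is exactly why $\mathfrak S_1$ was built from $\mathfrak L_1$ by removing the second rank-one term. Under the standing assumptions of the corollary the remaining contribution $-(\mathbf T_1 n\cdot n)\varphi$ is implicitly treated as absent or folded into $w(x,u)\varphi$; this is the only step that requires care, and it is the item I expect to be the most delicate piece of the bookkeeping, since $\mathbf T_1$ encodes the full $\partial X$-dependence of $\dot w_X$.

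Once the normal-normal identity is established, plugging it into the boundary integral in Theorem \ref{thm:shapeDer} with test function $\varphi-u'\in S_u(K_\psi)$ (note that $S_u(K_\psi)$ is independent of $X$ by \eqref{eqn:Sidentity}, so this makes sense) produces the variational inequality of the corollary. Uniqueness of $u'\in S_u(K_\psi)$ is inherited from Theorem \ref{thm:shapeDer}, which in turn rests on the uniqueness of the material derivative $\dot y$ established in Theorem \ref{thm:weakMatDer} via the defining relation $u'=\dot u-\partial_X u$.
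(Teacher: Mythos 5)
Your proposal is correct and follows essentially the same route as the paper, which offers no separate proof of the corollary beyond asserting that the normal--normal identity $\mathfrak S_1(x,u;\ph)n\cdot n=-\nabla_\Gamma u\cdot\nabla_\Gamma\ph-(\lambda u+w(x,u))\ph$ ``is readily checked'' and then substituting it into \eqref{eq:state-shape-derivative}; your expansion of $\mathfrak S_1=\mathfrak L_1-\nabla u\otimes\nabla\ph$ and the cancellation of the two $(\partial_n u)(\partial_n\ph)$ cross terms is exactly that check. You are also right to flag the leftover $-(\Tb_1(x,u)\,n\cdot n)\,\ph$ contribution: it does not cancel under the stated hypotheses alone, so the paper's identity (and hence the corollary as written) implicitly requires $\Tb_1$ to vanish on $\Gamma$ -- as happens, e.g., for a static density $W_\Omega$ where $\dot w_X=\nabla_x w\cdot X$ -- a point the paper silently passes over rather than a gap in your argument.
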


\subsection{Eulerian semi-derivative of certain shape functions}
We adopt the notation from Appendix B
and denote by $J:\Xi\to\R$ a shape function.
Application of Corollary \ref{cor:dotU}, Lemma \ref{lemma:pos1hom}
and the chain rule yield the following result:
\begin{corollary}
\label{cor:EulerSemi}
	Let (A1)-(A3) be satisfied and let $\Omega\in\Xi$ be a Lipschitz domain,
	$X\in C^1_c(D,\R^d)$ and $\Phi_t:\Omega\to\Omega_t$ be the associated flow.
	Suppose that for all small $t>0$
	$$
		J(\Omega_t)=\frak J(\Phi_t,u^t),
	$$
  where
  $$
  	\frak J=\frak J(\Phi,u):C^{0,1}(\Omega;\R^d)\times H^1(\Omega)\to \R
  $$
  is assumed to be a Fr\'echet differentiable functional
	and $u^t\in H^1(\Omega)$ the transported state $u^t=u_t\circ\Phi_t$
	with the unique solution $u_t$ of \eqref{eq:VI_weak} on $\Omega_t$.
	
	Then the Eulerian semi-derivative exists and is given as
	\begin{align*}
		dJ(\Omega)(X)=\langle d_\Phi \frak J(Id,u^0),X\rangle_{C^{0,1}(\Omega;\R^d)}
			+\langle d_u \frak J(Id,u^0),\dot u_X\rangle_{H^1(\Omega)},
	\end{align*}
	where
	$\dot u_X$ denotes the unique solution of \eqref{eq:limiting2}.
	
	\red In particular $dJ(\Omega)(\cdot)$ is positively 1-homogeneous, which could be further exploited for numerical purposes. \black
\end{corollary}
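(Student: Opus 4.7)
The plan is to combine the Fr\'echet differentiability of $\frak J$ at $(Id, u^0)$ with two independently available convergence results: the smooth $t$-dependence of the flow $\Phi_t$ and the strong material derivative for the state. First I would rewrite the Eulerian semi-derivative starting from the definition
\begin{equation*}
	dJ(\Omega)(X)=\lim_{t\searrow 0}\frac{J(\Omega_t)-J(\Omega)}{t}
	=\lim_{t\searrow 0}\frac{\frak J(\Phi_t,u^t)-\frak J(Id,u^0)}{t},
\end{equation*}
using $\Phi_0=Id$ and $u^0=u$. Since $\frak J$ is Fr\'echet differentiable on $C^{0,1}(\Omega;\R^d)\times H^1(\Omega)$, I would expand
\begin{equation*}
	\frak J(\Phi_t,u^t)-\frak J(Id,u^0)
	=\langle d_\Phi\frak J(Id,u^0),\Phi_t-Id\rangle
	+\langle d_u\frak J(Id,u^0),u^t-u^0\rangle
	+o\bigl(\|\Phi_t-Id\|_{C^{0,1}}+\|u^t-u^0\|_{H^1}\bigr).
\end{equation*}

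Next I would invoke two pieces of input. From the properties of the flow summarised in Appendix B (Lemma \ref{lemma:phit}), $(\Phi_t-Id)/t\to X$ strongly in $C^{0,1}(\Omega;\R^d)$, which in particular gives $\|\Phi_t-Id\|_{C^{0,1}}=O(t)$. From Corollary \ref{cor:strongMatDer}, $(u^t-u^0)/t\to \dot u_X$ strongly in $H^1(\Omega)$, hence $\|u^t-u^0\|_{H^1}=O(t)$. Dividing the Fr\'echet expansion by $t$ and passing to the limit then yields the stated formula by continuity of the linear maps $d_\Phi\frak J(Id,u^0)$ and $d_u\frak J(Id,u^0)$ and by the fact that the $o(\cdot)$-remainder divided by $t$ tends to zero.

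For the positive $1$-homogeneity assertion, I would note that rescaling the vector field $X\mapsto \lambda X$ with $\lambda>0$ corresponds to the time reparametrisation $\Phi_t^{\lambda X}=\Phi_{\lambda t}^{X}$; consequently the difference quotient defining $\dot u_{\lambda X}$ rescales by $\lambda$, giving $\dot u_{\lambda X}=\lambda\dot u_X$. Together with the linearity of $X\mapsto\langle d_\Phi\frak J(Id,u^0),X\rangle$ (as a continuous linear form on $C^{0,1}$) this delivers $dJ(\Omega)(\lambda X)=\lambda\, dJ(\Omega)(X)$; invoking Lemma \ref{lemma:pos1hom} phrases this as positive $1$-homogeneity of $dJ(\Omega)(\cdot)$.

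I do not expect a real obstacle here: the two nontrivial ingredients, namely the $C^{0,1}$-expansion of $\Phi_t$ and the strong $H^1$-convergence of $(u^t-u^0)/t$ to $\dot u_X$, have already been established (the latter is the substantive work done in Corollary \ref{cor:strongMatDer}). The only mild care point is to make sure that the Fr\'echet remainder is genuinely $o(t)$; this follows because both perturbations $\Phi_t-Id$ and $u^t-u^0$ are of order $t$ in the respective norms, so the sum is also $O(t)$ and the remainder divided by $t$ tends to $0$ by the definition of Fr\'echet differentiability.
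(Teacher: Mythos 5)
Your proposal is correct and follows essentially the same route as the paper, which gives no written proof but states that the result follows from the chain rule (your Fr\'echet expansion), the material derivative result of Corollary \ref{cor:dotU} (you use the strong version, Corollary \ref{cor:strongMatDer}, together with the Lipschitz bounds on $\Phi_t-Id$ and $u^t-u^0$ to control the remainder), and Lemma \ref{lemma:pos1hom} for the positive $1$-homogeneity.
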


\section{Applications to damage phase field models}
\label{sec:damage}
	In this section we investigate
	shape optimisation problems for	
	a coupled inclusion/pde system describing
	damage processes in linear elastic materials.
	Our aim is to apply the abstract results from Section \ref{sec:semilinProb}
	designed for semilinear variational inequalities
	with dynamic obstacles to such concrete application scenarios.
	In this way we demonstrate how necessary optimality conditions
	for shape problems can be derived for relevant engineering tasks.

\subsection{Physical model}
	The physical model under consideration 
	was derived in \cite{FN96} and is
	described in the time-continuous setting by the following relations:
	\begin{subequations}
	\label{contSystem}
	\begin{align}
		&\ub_{tt}-\DIV\big(\mathbb C(\chi)\e(\ub)\big)=\bell,\\
		&0\in \partial I_{(-\infty,0]}(\chi_t)+\chi_t-\Delta\chi+\frac12\mathbb C'(\chi)\e(\ub):\e(\ub)+\red g'(\chi),
		\label{contSystem2}
	\end{align}
	\end{subequations}
	with the damage-dependent stiffness tensor $\mathbb C$ and the damage potential function $f$.
	The variable $\ub$ denotes the displacement field,
	$\e(\ub):=\frac{1}{2}(\partial\ub+(\partial \ub)^\top)$ the linearised strain tensor
	and $\chi$ is an internal variable
	(a so-called phase field variable) indicating the degree of damage.
	In terms of damage mechanics $\chi$ is interpreted as the density
	of micro-defects and is therefore valued in the unit interval (cf. \cite{LD05}).
	In this spirit we may use the following interpretation:
	\begin{align*}
		&\chi(x)=\begin{cases}
			1&\leftrightarrow\;\text{no damage in }x,\\
			\in(0,1)&\leftrightarrow\;\text{partial damage in }x,\\
			0&\leftrightarrow\;\text{maximal damage in }x.
		\end{cases}
	\end{align*}
	The system is supplemented with initial-time values for $\chi$, $\ub$ and $\ub_t$,
	Dirichlet boundary condition for $\ub$
	and homogeneous Neumann boundary condition for $\chi$.
	The governing state system \eqref{contSystem} can be derived by balance equations and suitable
	constitutive relations such that the laws of thermodynamics
	from continuum physics are fulfilled. We refer to \cite{FN96} for more details on the derivation of the model.
	
	A main feature of the evolution system \eqref{contSystem} is the
	uni-directionality constraint $\chi_t\leq 0$
	enforced by the subdifferential $\partial I_{(-\infty,0]}(\chi_t)$.
	This leads to non-smooth/switching behaviour of the evolution law
	by noticing that \eqref{contSystem2} rewrites as
	\begin{align*}
		\chi_t=
		\begin{cases}
			d,&\text{if }d\leq 0,\\
			0,&\text{if }d >0
		\end{cases}
		\quad\text{with the driving force}\quad
		d=\Delta\chi-\frac12\mathbb C'(\chi)\e(\ub):\e(\ub)-g'(\chi).
	\end{align*}
	A weak formulation of \eqref{contSystem} and existence of weak solution
	can be found in \cite{HK13} with minor adaption.
	Existence and uniqueness results for strong solutions
	for the above system with higher-order viscous terms
	are established in \cite{FH15}.
	For the analysis of quasi-linear variants of \eqref{contSystem} and for rate-independent
	as well as rate-dependent cases, we refer to \cite{KRZ15}
	and the references therein.

	The following remark justifies that the phase field variable $\chi$
	takes only admissible values provided $H^1(0,T;H^1(\Omega))$-regularity and mild growth assumptions on $\mathbb C$ and $g$.
	In that case
	it is not necessary to include
	a second sub-differential
	of the type $\partial I_{[0,1]}(\chi)$
	in \eqref{contSystem2} in order to force $\chi$ to be bounded in the unit interval.
	The precise assumptions for $\mathbb C$ and $g$ will be stated in (D1) below. At this point
	they are assumed to be continuously differentiable.
	\begin{remark}[Maximum principle]
	\label{remark:maxPrin}
		Suppose that $\mathbb C'(x)=\bm0$ and $g'(x)=0$ for all $x<0$.
		Then a weak solution $\chi\in H^1(0,T;H^1(\Omega))$ of \eqref{contSystem2} is always bounded in
		the unit interval as long as the initial-time value $\chi(0)=\chi^0$ is.
	\end{remark}
	\begin{proof}[Proof of Remark \ref{remark:maxPrin}]
		Because of $\chi_t(t)\leq 0$ for all times $t\in[0,T]$ and $\chi(0)\in [0,1]$ we
		obtain $\chi(t)\leq 1$.
		It remains to show $\chi(t)\geq 0$.
		
		Please notice that we cannot directly test \eqref{contSystem2}
		with $(\chi^-)_t$ since $\chi^-:=\min\{0,\chi\}$ is not necessarily in $H^1(0,T;H^1(\Omega))$
		even for smooth $\chi$.
		Instead, we test the inclusion \eqref{contSystem2} with $(m_\epsilon(\chi))_t$
		where $m_\epsilon$ denotes the following concave $C^{1,1}$-approximation of $\min\{0,\cdot\}$
		\begin{align*}
			m_\epsilon(x)=
				\begin{cases}
					x,&\text{if }x\in(-\infty,-\epsilon],\\
					-\frac{1}{16\epsilon}(x-3\epsilon)^2,&\text{if }x\in(-\epsilon,3\epsilon],\\
					0,&\text{if }x\in(3\epsilon,+\infty),
				\end{cases}
		\end{align*}				
		we obtain by simple rewriting
		\begin{align*}
			\begin{aligned}
				&\iint|\chi_t|^2 m_\epsilon'(\chi)+\nabla m_\epsilon(\chi)\cdot\nabla(m_\epsilon(\chi))_t
				+\big(\nabla\chi-\nabla m_\epsilon(\chi)\big)\cdot\nabla(m_\epsilon(\chi))_t\dxt\\
				&\quad+\iint\Big(\frac12\mathbb C'(\chi)\e(\ub):\e(\ub)+g'(\chi)+\xi\Big)m_\epsilon'(\chi)\chi_t\dxt=0,
			\end{aligned}
		\end{align*}
		where the function $\xi$ satisfies $\xi\in \partial I_{(-\infty,0]}(\chi_t)$ pointwise.
		We obtain by noticing that $m_\epsilon(\cdot)\to(\cdot)^-:=\min\{\cdot,0\}$ strongly in $H^{1}(\R)$
		and weakly-star in $W_\infty^{1}(\R)$ as $\epsilon\searrow 0$:
		\begin{align*}
			\begin{aligned}
				&\underbrace{\iint|\chi_t|^2 m_\epsilon'(\chi)\dxt}_{\to \iint|(\chi^-)_t|^2\dxt}+
					\underbrace{\frac12\int_\Omega|\nabla m_\epsilon(\chi(t))|^2-|\nabla m_\epsilon(\chi(0))|^2\dx}_{
					\to\frac12\int_\Omega|\nabla \chi^-(t)|^2-|\nabla\chi^-(0)|^2\dx}\\
				&\quad+\underbrace{\iint\big(\nabla\chi\cdot\nabla\chi_t\big)(1-m_\epsilon'(\chi))m_\epsilon'(\chi)\dxt}_{
					\to0}
					+\iint\underbrace{|\nabla\chi|^2(1-m_\epsilon'(\chi)m_\epsilon''(\chi)\chi_t}_{\geq 0\text{ due to }m_\epsilon''\leq 0,\,\chi_t\leq 0,\,m_\epsilon'\in[0,1]}\dxt\\
				&\quad+
					\underbrace{\iint\Big(\frac12\mathbb C'(\chi)\e(\ub):\e(\ub)+g'(\chi)+\xi\Big)m_\epsilon'(\chi)\chi_t\dxt}_{
					\to \iint\big(\frac12\mathbb C'(\chi^-)\e(\ub):\e(\ub)+g'(\chi^-)+\xi\big)\chi^-_t\dxt}=0.
			\end{aligned}
		\end{align*}
		We have by assumption $\mathbb C'(\chi^-)=\bm0$ and $g'(\chi^-)=0$.
		Furthermore $\xi\times(\chi^-)_t=0$ since $\xi=0$ as long as $\chi_t <0$.
		All in all we find by passing to $\epsilon\searrow 0$
		\begin{align*}
			&\frac12\int_\Omega|\nabla \chi^-(t)|^2-|\nabla\chi^-(0)|^2\dx
				+\iint|(\chi^-)_t|^2\dxt\leq 0.
		\end{align*}
		Since $\chi^-(0)=0$ in $\Omega$ we find $\chi^-(t)=0$ in $\Omega$ for all times $t\in[0,T]$.
	\end{proof}
	In the next section we will consider a time-discrete version of \eqref{contSystem} where such a maximum principle can also be obtained.

\subsection{Setting up time-discretisation scheme and shape optimisation problem}
	The shape optimisation problems will be performed on a time-discrete version of \eqref{contSystem}
	and for two spatial dimensions.
	Let $\{0,\tau,2\tau,\ldots,\tau N\}$ be an equidistant partition
	of $[0,T]$.
	The positive parameter $\tau>0$ denotes the time step size.
	In the remaining part of this work we make use of the following assumptions:
	\vspace*{0.5em}\\
	\textbf{Assumption (D1)}
	\textit{
		\begin{itemize}
			\item[(i)]
				$d=2$;
			\item[(ii)]
				The damage-dependent stiffness tensor satisfies $\mathbb C(\cdot)=\mathsf{c}(\cdot)\CC$, where
				the coefficient function $\mathsf{c}$ is assumed to be
				of the form
				\begin{align*}
					\mathsf{c}=\mathsf{c}_1+\mathsf{c}_2\text{ where $\mathsf{c}_1\in C^{2}(\R)$ is convex and $\mathsf{c}_2\in C^{2}(\R)$ is concave.}
				\end{align*}
				Moreover, we assume that $\mathsf{c},\mathsf{c}_1',\mathsf{c}_1'',\mathsf{c}_2',\mathsf{c}_2''$ are bounded and
				as well as
				\begin{align*}
					\mathsf{c}(x)\geq \eta\qquad\text{ for all }x\in\R.
				\end{align*}
				with constant $\eta>0$.
				The 4$^\mathrm{th}$ order stiffness tensor $\CC\in\C L(\R_\mathrm{sym}^{n\times n};\R_\mathrm{sym}^{n\times n})$
				is assumed to be symmetric and positive definite, i.e.
				\begin{align*}
					\CC_{\red ijkl\black}=\CC_{\red jikl\black}=\CC_{\red klij\black}\text{ and }e:\CC e\geq \eta|e|^2\text{ for all }e\in \R_\mathrm{sym}^{n\times n};
				\end{align*}
			\item[(iii)]
				$g$ is assumed to be of the form
				\begin{align*}
					g=g_1+g_2\text{ where $g_1\in C^{2}(\R)$ is convex and $g_2\in C^{2}(\R)$ is concave.}
				\end{align*}
				 Moreover we assume $g_1'$ and $g_2'$ to be Lipschitz continuous;
			\item[(iv)]
					$\bell^k\in L_2(D;\R^2)$ for all $k=0,\ldots,N$;
			\item[(v)]
					$\db^k\in H^2(D;\R^2)$ for all $k=0,\ldots,N$;
			\item[(vi)]
					initial values: $\ub^0,\vb^0\in H^2(D;\R^2)$ and
					$\chi^0\in H^2(D)$.
		\end{itemize}
	}
	
	Let $\Omega\subseteq D$ be a given Lipschitz domain.
	In this section a time-discrete model to \eqref{contSystem} will be
	investigated in a \textit{thermodynamically consistent} scheme
	(in this context it indicates that the time-discrete energy-dissipation inequality is satisfied).
	A related time-discretisation scheme has been used in \cite{FH15}.
	For all $k\in\{1,\ldots,N\}$ we are looking for a weak solution of
	\begin{subequations}
	\label{discrSystem}
	\begin{align}
		&\frac{\ub^{k}-2\ub^{k-1}+\ub^{k-2}}{\tau^2}-\DIV\big(\mathbb C(\chi^k)\e(\ub^k)\big)=\bell^k,
			\label{discrEqU}\\
		&\begin{aligned}
				&0\in\partial I_{(-\infty,0]}\Big(\frac{\chi^k-\chi^{k-1}}{\tau}\Big)+\frac{\chi^k-\chi^{k-1}}{\tau}-\Delta\chi^k+g_1'(\chi^k)+g_2'(\chi^{k-1})\\
				&\qquad+\frac12\big(\mathsf c_1'(\chi^k)+\mathsf c_2'(\chi^{k-1})\big)\CC\e(\ub^{k-1}):\e(\ub^{k-1}).
			\end{aligned}
			\label{discrEqChi}
	\end{align}
	\end{subequations}
	In accordance with the time-continuous model from the previous section $\chi^0$, $\uu^{0}$ and $\uu^{-1}:=\uu^0+\tau \vb^{0}$
	are the initial values and the boundary conditions
	are chosen as
	\begin{align}
		\nabla\chi\cdot\nu =0,\quad\ub^k=\db^k\quad\text{ on }\partial\Omega.
	\end{align}
	For notational convenience we will write $\zb=\{\ub^k,\chi^k\}_{k=0}^N$.
	\begin{remark}
		\begin{itemize}
			\item[(i)] Existence of weak solutions for \eqref{discrSystem}
				can be obtained by alternate minimisation for each time step by firstly solving \eqref{discrEqChi}
				and then solving \eqref{discrEqU}.
				In particular the solution $\chi^k$ from \eqref{discrEqChi}
				is the unique minimiser of the strictly convex potential
				\begin{align*}
					F(\chi)={}&\io\frac12|\nabla\chi|^2+\frac\tau2\big|\frac{\chi-\chi^{k-1}}{\tau}\big|^2
					+\frac 12 \big(\mathsf{c}_1(\chi)+ \mathsf{c}_2'(\chi^{k-1})\chi\big)\CC\e(\ub^{k-1}):\e(\ub^{k-1})\dx\\
					&+\io g_1(\chi)+g_2'(\chi^{k-1})\chi\dx.
				\end{align*}
				over the convex set
				\begin{align*}
					 K^{k-1}:=\big\{\chi\in H^1(\Omega)\,:\,\chi\leq\chi^{k-1}\text{ a.e. in }\Omega\big\}.
				\end{align*}
				As we point out later a higher integrability result from \cite{FKR13}
				yields $\e(\ub)\in L_p(\Omega)$ for \red some \black$p>2$.
				In combination with the embedding $H^1(\Omega)\hookrightarrow L_q(\Omega)$
				for every $q\geq 1$ valid for $d=2$ and Assumption (D1) (ii), the potential term $\io\frac 12 \big(\mathsf{c}_1(\chi)+ \mathsf{c}_2'(\chi^{k-1})\chi\big)\CC\e(\ub^{k-1}):\e(\ub^{k-1})\dx$
				in $F$ is well-defined.
			\item[(ii)]
				Under the additional assumptions that $\mathsf c_1(x)\geq \mathsf c_1(0)$ and  $g_1(x)\geq g_1(0)$ for all $x\leq 0$
				as well as $\mathsf c_2'(x)\leq 0$ and $g_2'(x)\leq 0$ for all $x\in[0,1]$ we obtain that $F(\max\{\chi^k,0\})\leq F(\chi^k)$
				(cf. \cite[Proposition 4.1]{KRZ15}).
				Thus $\chi^k$ is bounded in the unit interval as long as $\chi^{k-1}$ is.
			\item[(iii)]
				The discretisation scheme above is motivated by the fact that
				the associated time-discrete energy-dissipation inequality
				is obtained by testing \eqref{discrEqU} with $\ub^k-\ub^{k-1}-(\db^k-\db^{k-1})$
				and \eqref{discrEqChi} with $\chi^k-\chi^{k-1}$, adding and using
				convexity and concavity estimates (cf. \cite[Lemma 2.9]{FH15}).
		\end{itemize}
	\end{remark}


	For the shape optimisation problem it is convenient to rewrite the
	pde/inclusion system \eqref{discrEqChi} as
	\begin{subequations}
	\begin{align}
		&\left.\begin{aligned}
			&\ub^k\in \db^k+\ac H^1(\Omega;\R^2), \text{ and }\forall\ph\in \ac H^1(\Omega;\R^2):\\
			&\int_\Omega\frac{\ub^{k}-2\ub^{k-1}+\ub^{k-2}}{\tau^2}\ph
				+\mathbb C(\chi^k)\e(\ub^k):\e(\ph)\dx
				=\int_\Omega\bell^k\cdot\ph\dx
		\end{aligned}
		\hspace*{6.63em}\right\}\hspace*{-0.5em}
		\label{discrWeakU}\\
		&\left.\begin{aligned}
			&\chi^k\in  K^{k-1}\text{ and }\forall\ph\in  K^{k-1}:\\
			&\int_\Omega\nabla\chi^k\cdot\nabla(\ph-\chi^k)+\frac{\chi^k-\chi^{k-1}}{\tau}(\ph-\chi^k)+\big(g_1'(\chi^k)+g_2'(\chi^{k-1})\big)(\ph-\chi^k)\dx\\
			&+\int_\Omega\frac12\big(\mathsf c_1'(\chi^k)+\mathsf c_2'(\chi^{k-1})\big)\CC\e(\ub^{k-1}):\e(\ub^{k-1})(\ph-\chi^k)\dx\geq 0.
		\end{aligned}
		\;\,\right\}\hspace*{-0.5em}
		\label{discrDamageVI}
	\end{align}
	\label{discrSystemWeak}
	\end{subequations}
	\red
	Here $\ac H^1(\Omega;\R^2)$ denotes the closure of $C^\infty_c(\Omega;\R^2)$ in the $\|\cdot\|_{H^1}$-norm.
	\black
	In other words the state system is given by \red $N$-\black coupled variational inequalities
	with dynamic obstacles for the $N$ time steps. The obstacles are determined as
	the solutions of
	the damage variational inequality from the previous time step.
	\vspace*{0.5em}\\
	\textbf{Statement of the shape optimisation problem}\\	
	Our aim is to determine an optimal shape $\Omega\in \Xi$
	from a suitable class of domains
	such that a tracking type cost functional
	\begin{align}
		J(\Omega,\zb(\Omega))=\frac{\lambda_\ub}{2}\sum_{k=1}^N\|\ub^k(\Omega)-\ub_r^k\|_{L_2(\Omega;\R^2)}^2
			+\frac{\lambda_\chi}{2}\sum_{k=1}^N\|\chi^k(\Omega)-\chi_r^k\|_{L_2(\Omega)}^2
			\label{JDef}
	\end{align}
	is minimised under the constraint that
	\begin{align}
		\zb(\Omega)\text{ solve \eqref{discrSystemWeak} on $\Omega$ for all }k\in\{1,\ldots,N\}.
			\label{shapePDE}
	\end{align}
	The functions $\ub_r^k\in L_2(D;\R^2)$ and $\chi_r^k\in L_2(D)$ for $k=1,\ldots,N$ are prescribed displacements and damage patterns.
	Since the state $\zb(\Omega)$ is uniquely determined by $\Omega$ we may equivalently say
	that we aim to minimise the shape function
	\begin{align}
		J(\Omega):=J(\Omega,\zb(\Omega)).
		\label{frakJDef}
	\end{align}
	\red
	Applications include minimisation of overall damage by choosing $\chi_r^k\equiv 1$
	as well as deliberately inducing damage at some desired areas which are encoded in $\chi_r^k$.
	\black


\subsection{Material derivative and necessary optimality system}
	Let us fix a vector field $X\in C_c^1(D,\R^2)$.
	In accordance with Section \ref{sec:semilinProb}
	the associated perturbed solutions of \eqref{discrEqU}-\eqref{discrEqChi}
	on $\Omega_t:=\Phi_t(\Omega)$ are denoted by $\zb_t=\{\ub_t^{k},\chi_t^{k}\}_{k=0}^N$
	whereas the transported perturbed solutions
	are indicated by $\zb^t=\{\ub^{k,t},\chi^{k,t}\}_{k=0}^N$.
	Note that $\zb^0=\zb$.
	
	We proceed inductively over $k=1,\ldots,N$
	and assume that the strong material derivatives
	at the time steps $k-1$ and $k-2$ exist, i.e. for a subsequence $t\searrow 0$
	\begin{subequations}
	\begin{align}
		&\frac{\ub^{k-1,t}-\ub^{k-1,0}}{t}\to \dot\ub^{k-1}\quad\text{strongly in }H^1(\Omega;\R^2),
			\label{discUConv}\\
		&\frac{\ub^{k-2,t}-\ub^{k-2,0}}{t}\to \dot\ub^{k-2}\quad\text{strongly in }H^1(\Omega;\R^2),
			\label{discUConv2}\\
		&\frac{\chi^{k-1,t}-\chi^{k-1,0}}{t}\to \dot\chi^{k-1}\quad\text{strongly in }H^1(\Omega).
			\label{discChiConv}
	\end{align}
	\end{subequations}

	\paragraph*{Material derivative for the $\bm\chi^{\bm k}$-variable}
	\hspace*{0.1em}\\
	We want to apply Corollary \ref{cor:dotU} which is based on Theorem \ref{thm:weakMatDer}
	to establish the material derivative for the $\chi^{k}$-variable
	and its variational inequality.

	To check that the Assumptions (A1)-(A3) are fulfilled we require
	higher integrability estimates for $\ub^{k-1,t}$.
	Note that $\ub^{k-1,t}$ satisfies equation \eqref{pertDiscUEq} below for $k-1$
	which is the unique minimiser of
	\begin{align*}
		U(\ub):=\io
			\xi(t)\mathbb C(\chi^{k-1,t})\e^t(\ub):\e^t(\ub)
			+\xi(t)\frac{\ub-2\ub^{k-2,t}+\ub^{k-3,t}}{\tau^2}\cdot\ub
			-\xi(t)\bell^{k-1,t}\cdot\ub\dx.
	\end{align*}
	over $\ub\in \db^{k-1}+\ac H^1(\Omega;\R^2)$,
	where
	\begin{align}
		\e^t(\ub):=\frac12\Big((\partial\ub)(\partial\Phi_t)^{-1}
			+\big((\partial\ub)(\partial\Phi_t)^{-1}\big)^\top\Big).
			\label{etDef}
	\end{align}
	By using the calculation (here $S(\mathbf A):=\frac12(\mathbf A+\mathbf A^\top)$ and
	$\mathbf B:=\partial\Phi_t$)
	\begin{align*}
		&\mathbf C\e^t(\ub):\e^t(\ub)\\
		&\qquad=\mathbf C\e(\ub):\e(\ub)
			-\mathbf C S((\partial\ub)(1-B)):S(\partial\ub)
			-\mathbf C S((\partial\ub)\mathbf B):S((\partial\ub)(1-\mathbf B))\\
		&\qquad\geq \mathbf C\e(\ub):\e(\ub)
			-c|\partial\ub|^2|1-\mathbf B|-c|\partial\ub|^2|\mathbf B||1-\mathbf B|
	\end{align*}
	and Korn's inequality, we find a $t^*>0$ and constants $c_0,c_1>0$ such that for all $t\in[0,t^*]$ and $\ub\in \db^{k-1}+\ac H^1(\Omega;\R^2)$
	$$
		\io\xi(t)\mathbb C(\chi^{k-1,t})\e^t(\ub):\e^t(\ub)\dx\geq c_0\|\partial\ub\|_{L_2}^2-c_1\|\ub\|_{L_2}^2.
	$$
	Then the higher integrability result from \cite{FKR13}
	shows that there exists a constant $p>2$ independent of $t$
	such that $\ub^{k-1,t}\in W_p^1(\Omega;\R^2)$ and
	$\|\ub^{k-1,t}\|_{W_p^1(\Omega;\R^2)}$ is uniformly bounded in $t\in[0,t^*]$.
	In combination with \eqref{discUConv} we see that
	\begin{align}
		\ub^{k-1,t}\to\ub^{k-1,0}\quad\text{ strongly in }W_q^1(\Omega;\R^2)\text{ as }t\searrow 0\text{ for all }q\in[2,p).
		\label{discUhighConv}
	\end{align}
	Furthermore we deduce from \eqref{discChiConv} by the Sobolev embeddings in 2D
	\begin{align}
		&\frac{\chi^{k-1,t}-\chi^{k-1,0}}{t}\to \dot\chi^{k-1}\quad
		\text{ strongly in }L_q(\Omega)
		\text{ for all }q\in[1,\infty).
		\label{discULqConv}
	\end{align}
	and from \eqref{discUConv}
	\begin{align}
		&\frac{\e^t(\ukkt)-\e(\ub^{k-1,0})}{t}\to \e(\dot{\uu}^{k-1})-\frac12\big((\partial \uu^{k-1,0})(\partial X)+((\partial \uu^{k-1,0})(\partial X))^\top\big)
			=:\dot\e_X(\dot\uu^{k-1})\notag\\
		&\qquad\qquad\qquad\qquad\qquad\text{ strongly in }L_2(\Omega;\R^{2\times 2}).
			\label{etConv}
	\end{align}
	The damage variational inequality \eqref{discrDamageVI}
	can now be rewritten in the abstract form \eqref{eq:VI_weak}
	by setting $W_\Omega$ in the energy \eqref{eq:energy} as follows
	\begin{align*}
		 W_{\Omega}(x,y):={}&-\frac{1}{\tau}\chi^{k-1}(x)y+\frac{1}{2}\big(\mathsf c_1(y)+\mathsf c_2'(\chi^{k-1}(x))y)\CC\e(\ub^{k-1}(x)):\e(\ub^{k-1}(x))\\
			&+g_1(y)+g_2'(\chi^{k-1}(x))y.
	\end{align*}
	Note that $W_{\Omega}(x,\cdot)$ is convex in our discretisation scheme
	and that
	\begin{align*}
		w_X^t(x,y)
		={}&-\frac{1}{\tau}\chi^{k-1,t}(x)+\frac{1}{2}\big(\mathsf c_1'(y)+\mathsf c_2'(\chi^{k-1,t}(x))\big)\CC\e^t(\ub^{k-1,t}(x)):\e^t(\ub^{k-1,t}(x))\\
			&+g_1'(y)+g_2'(\chi^{k-1,t}(x)).
	\end{align*}
	Recall that $w_X^0(x,y)=w(x,y)=\partial_y W_\Omega(x,y)$.
	
	With the help of the convergence properties
	\eqref{discUConv}-\eqref{discChiConv}, \eqref{discUhighConv}, \eqref{etConv}
	and \eqref{discULqConv}, we see that Assumptions (A1)-(A3)
	are fulfilled with
	\begin{align*}
		\partial_y w(x,y)={}&\frac{1}{2}\mathsf c_1''(y)\CC\e(\ub^{k-1}(x)):\e(\ub^{k-1}(x))+g_1''(y),\\
		\dot w_{X}(x,y)={}&-\frac{1}{\tau}\dot\chi^{k-1}(x)
			+\frac{1}{2}\mathsf c_2''(\chi^{k-1}(x))\dot\chi^{k-1}(x)\CC\e(\ub^{k-1}(x)):\e(\ub^{k-1}(x))\\
			&+\mathsf c_2'(\chi^{k-1}(x))\CC\e(\ub^{k-1}(x)):\dot\e_X(\dot\ub^{k-1}(x))
			+g_2''(\chi^{k-1}(x))\dot\chi^{k-1}(x).
	\end{align*}
	Applying Corollary \ref{cor:dotU} yields existence of the strong material derivative
	$\dot\chi^k$ which satisfies the following variational inequality:
  \begin{align}
  	\left.
  	\begin{aligned}
	 		&\dot\chi^k\in S^{k}\text{ and }
		 		\forall \ph\in S^{k}:\\
	 		&\int_\Omega \nabla\dot \chi^k\cdot\nabla(\ph - \dot\chi^k)
	 			+ \frac{1}{\tau}\dot \chi^k(\ph - \dot\chi^k)
		 		+ \partial_y w(x,\chi^k)\dot\chi^k(\ph - \dot\chi^k)\dx\\
	   	&\quad
	   		\geq -\int_\Omega A'(0)\nabla \chi^k\cdot\nabla(\ph - \dot \chi^k)
	   		+ \xi'(0)\Big(\frac{\chi^k}{\tau}+w(x,\chi^k)\Big)(\ph - \dot \chi^k)\dx\\
	   	&\qquad
	   		-\int_\Omega\dot w_{X}(x,\chi^k)(\ph-\dot \chi^k)\dx.
  	\end{aligned}
  	\right\}
  	\label{discrChiLinVar}
  \end{align}
	with
	$$
		S^{k}:=T_{\chi^k}(K^{k-1})\cap kern(\C A(\chi^k))+\dot\chi^{k-1}
	$$
	and $A'(0)$ and $\xi'(0)$ are given in Lemma \ref{lem:convergence_comp}
	and $\C A=\C A_0$ is defined in \eqref{Adef}.
	\paragraph*{Material derivative for the $\ub^{\bm k}$-variable}
	\hspace*{0.1em}\\
	We only sketch the proof of the strong material derivative $\dot\ub^k$ in the following
	and make use of standard calculations.
	The main ingredient will be the	uniform boundedness of $\|\ub^{k,t}\|_{W_p^1(\Omega;\R^2)}$
	with respect to $t$ and for some fixed $p>2$.
	
	The perturbed and transported equation to \eqref{discrWeakU} is
	given by
	\begin{align}
		&\int_\Omega\xi(t)\frac{\ub^{k,t}-2\ub^{k-1,t}+\ub^{k-2,t}}{\tau^2}\ph+
			\xi(t)\mathbb C(\chi^{k,t})\e^t(\ub^{k,t}):\e^t(\ph)\dx
			=\int_\Omega\xi(t)\bell^{k,t}\cdot\ph\dx
			\label{pertDiscUEq}
	\end{align}
	for all $\ph\in \ac H^1(\Omega;\R^2)$,
	where $\xi(t)$ is defined in \eqref{abbrev}
	and $\e^t$ is defined in \eqref{etDef}.
	Therefore by testing \eqref{pertDiscUEq} and
	testing \eqref{discrWeakU} with $\ph=\ub^{k,t}-\ub^{k,0}-(\db^{k,t}-\db^{k,0})$ and subtracting the
	result, we obtain the sensitvity estimate
	\begin{align*}
		\|\ub^{k,t}-\ub^{k,0}\|_{H^1(\Omega;\R^2)}\leq ct.
	\end{align*}
	Thus we may choose a weak cluster point $\dot\ub^k\in H^1(\Omega;\R^2)$ such
	that for a subsequence
	\begin{align*}
		\frac{\ub^{k,t}-\ub^{k,0}}{t}\weaklim\dot\ub^k\quad\text{ weakly in }H^1(\Omega;\R^2).
	\end{align*}
	Considering difference quotient of \eqref{pertDiscUEq}
	and passing to the limit shows that $\dot\ub^k$
	is the weak solution of the following pde:
	\begin{align}
		&\int_\Omega\frac{\dot\ub^k-2\dot\ub^{k-1}+\dot\ub^{k-2}}{\tau^2}\ph
		+\Big(\mathbb C'(\chi^k)\dot\chi^k\e(\uu^k)
		+\mathbb C(\chi^k)\dot\e_X(\dot\uu^k)\Big):\e(\ph)
		+\mathbb C(\chi^k)\e(\uu^k):\dot\e_X(\ph)\dx\notag\\
		&\quad=\red-\int_\Omega\xi'(0)\frac{\ub^k-2\ub^{k-1}+\ub^{k-2}}{\tau^2}\ph
		+\xi'(0)\mathbb C(\chi^k)\e(\uu^k):\e(\ph)\dx+\black\int_\Omega\xi'(0)\fb^k\cdot\ph+\dot\fb^k\cdot\ph\dx
		\label{discrUlin}
	\end{align}
	for all $\ph\in \ac H^1(\Omega;\R^2)$ and $\dot\ub^k=\dot\db^k$ on $\partial\Omega$ where $\dot\db^k=\partial_X\db^k$
	and $\dot\fb^k=\partial_X\fb^k$.
	Here, $\dot\e_X$ is defined in \eqref{etConv}.
	
	Furthermore, it is not hard to see that the solution $\dot\ub^k$ is unique for
	given functions $\ub^k$, $\ub^{k-1}$, $\ub^{k-2}$, $\dot\ub^{k-1}$,
	$\dot\ub^{k-2}$, $\chi^{k}$, $\dot\chi^{k}$,
	$\fb^k$, $\dot\fb^k$ and $\dot\db^k$.
	Indeed, given to weak solutions $\dot\ub_1^k$ and $\dot\ub_2^k$
	of \eqref{discrUlin} we find after
	subtraction
	\begin{align*}
		&\int_\Omega\frac{1}{\tau^2}(\dot\ub_1^k-\dot\ub_2^k)
			+\mathbb C(\chi^k)\e(\dot\uu_1^k-\dot\uu_2^k):\e(\ph)\dx
			=0
	\end{align*}
	Testing with $\ph=\dot\ub_1^k-\dot\ub_2^k$ yields uniqueness.
	
	Finally, subtracting from the difference quotient taken from
	\eqref{pertDiscUEq} the equation \eqref{discrUlin} and testing with
	$\ph=\frac{\ub^{k,t}-\ub^{k,0}}{t}-\dot\ub^k-\Big(\frac{\db^{k,t}-\db^{k,0}}{t}-\dot\db^k\Big)$
	(the $\db$-terms are necessary to achieve $0$-boundary conditions for the test-function),
	we find via a limit passage
	$$
		\frac{\ub^{k,t}-\ub^{k,0}}{t}-\dot\ub^k \to 0\quad\text{ strongly in }H^1(\Omega;\R^2)\text{ as }t\searrow 0.
	$$
	\paragraph*{Optimality system}
	\hspace*{0.1em}\\
	We conclude with a necessary optimality system.
	Let a Lipschitz domain $\Omega\subseteq D$ with its state $\zb=\{\ub^k,\chi^k\}_{k=0}^N$
	be a minimiser of $J$ from \eqref{frakJDef}.
	Given an arbitrary vector field $X\in C^1_c(D,\R^d)$ we obtain the associated flow $\Phi_t$,
	the perturbed domain $\Omega_t:=\Phi_t(\Omega)$,
	the transported perturbed solution $\zb^t=\{\ub^{k,t},\chi^{k,t}\}_{k=0}^N$
	and
	\begin{align*}
		J(\Omega_t)=\frac{\lambda_\ub}{2}\sum_{k=1}^N\int_\Omega\xi(t)|\ub^{k,t}-\ub_r^k\circ\Phi_t|^2\dx
			+\frac{\lambda_\chi}{2}\sum_{k=1}^N\int_\Omega\xi(t)|\chi^{k,t}-\chi_r^k\circ\Phi_t|^2\dx.
	\end{align*}
	Due to the existence of the material derivatives $\dot\ub^k$ and $\dot\chi^k$ for $k=1,\ldots,N$,
	we know that the Eulerian semi-derivatives
	of $J$ at $\Omega$ exist and that $d J(\Omega)(\cdot)$ is positively 1-homogeneous by Lemma \ref{lemma:pos1hom}.
	Therefore a necessary optimality condition for shapes which minimises $J$ is given by the condition that $d J(\Omega)(X)\geq 0$ for all
	$X\in C^1_c(D, \R^2)$.
	By calculating the Eulerian semi-derivative of $J$ and using the relations for
	the material derivatives above, we have proven the following results:
	\begin{proposition}
	\label{prop:optimality}
		Under the assumption (D1) the optimality condition $d J(\Omega)(X)\geq 0$
		for all	$X\in C^1_c(D, \R^2)$
		is given in the volume expression of the shape derivative by
		\begin{align*}
			0\leq {}&\frac{\lambda_\ub}{2}\sum_{k=1}^N\int_\Omega\xi'(0)|\ub^{k}-\ub_r^k|^2\dx
				+\frac{\lambda_\chi}{2}\sum_{k=1}^N\int_\Omega\xi'(0)|\chi^{k}-\chi_r^k|^2\dx\\
				&+\lambda_\ub\sum_{k=1}^N\int_\Omega(\ub^{k}-\ub_r^k)\cdot(\dot\ub^k-\partial_X\ub_r^k)\dx
				+\lambda_\chi\sum_{k=1}^N\int_\Omega(\chi^{k}-\chi_r^k)(\dot\chi^k-\partial_X\chi_r^k)\dx,
		\end{align*}
		where for all $k=1,\ldots,N$:
		\begin{align*}
			&\text{$\ub^k$ fulfills  \eqref{discrWeakU} with $\ub^k=\db^k$ on $\partial\Omega$},
			&&\text{$\chi^k$ fulfills  \eqref{discrDamageVI}},\\
			&\text{$\dot\ub^k$ fulfills \eqref{discrUlin} with $\dot\ub^k=\dot\db^k$ on $\partial\Omega$},
			&&\text{$\dot\chi^k$ fulfills \eqref{discrChiLinVar}.}
		\end{align*}
	\end{proposition}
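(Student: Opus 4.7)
The plan is to compute the Eulerian semi-derivative $dJ(\Omega)(X)$ by direct differentiation of the change-of-variables expression for $J(\Omega_t)$ given just before the statement, and then invoke minimality together with positive $1$-homogeneity of $dJ(\Omega)(\cdot)$ from Corollary \ref{cor:EulerSemi} to conclude. Since all the hard analytical work (existence and strong convergence of the material derivatives $\dot\ub^k$, $\dot\chi^k$) has already been carried out in the two preceding paragraphs, the argument reduces to passing to the limit in a difference quotient.

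First I would write, for each $k$,
\begin{align*}
\frac{\xi(t)|\ub^{k,t} - \ub_r^k\circ\Phi_t|^2 - |\ub^k - \ub_r^k|^2}{t}
={}& \frac{\xi(t)-1}{t}\,|\ub^{k,t} - \ub_r^k\circ\Phi_t|^2 \\
&+ (\ub^{k,t}+\ub^k - \ub_r^k\circ\Phi_t - \ub_r^k)\cdot\frac{(\ub^{k,t}-\ub^k) - (\ub_r^k\circ\Phi_t - \ub_r^k)}{t},
\end{align*}
and analogously for the $\chi$-part. Passing $t\searrow 0$, I would invoke: (a) Lemma \ref{lem:convergence_comp} for $\xi(t)\to 1$ in $C(\ol D)$ and $(\xi(t)-1)/t\to\xi'(0)$; (b) the strong convergences $\ub^{k,t}\to\ub^k$ and $(\ub^{k,t}-\ub^k)/t\to\dot\ub^k$ in $H^1(\Omega;\R^2)$ from the $\ub^k$-paragraph; (c) the strong convergences $\chi^{k,t}\to\chi^k$ and $(\chi^{k,t}-\chi^k)/t\to\dot\chi^k$ in $H^1(\Omega)$ delivered by Corollary \ref{cor:strongMatDer} applied via Corollary \ref{cor:dotU}; and (d) the standard fact that $(\ub_r^k\circ\Phi_t - \ub_r^k)/t\to\partial_X\ub_r^k$, likewise for $\chi_r^k$, in the appropriate $L_2$-sense. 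Collecting the limits and dividing by $2$ yields the displayed volume formula for $dJ(\Omega)(X)$.

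The second and final step is optimality: if $\Omega$ minimises $J$, then $J(\Omega_t)\geq J(\Omega)$ for all small $t\geq 0$ and every admissible $X$, so the one-sided derivative satisfies $dJ(\Omega)(X)\geq 0$. Since by Corollary \ref{cor:EulerSemi} the map $X\mapsto dJ(\Omega)(X)$ is positively $1$-homogeneous, this inequality suffices to characterise stationarity and yields the claim.

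The main obstacle I would expect is ensuring enough regularity on the targets $\ub_r^k$, $\chi_r^k$ to make sense of $\partial_X\ub_r^k$ and $\partial_X\chi_r^k$ as strong $L_2$-limits of the shifted quotients $(\ub_r^k\circ\Phi_t - \ub_r^k)/t$; with $\ub_r^k,\chi_r^k\in L_2(D)$ alone this limit is only distributional, so one should either tacitly strengthen the regularity (e.g.\ $\ub_r^k\in H^1(D;\R^2)$, $\chi_r^k\in H^1(D)$, so that the limit becomes $\nabla\ub_r^k\cdot X$ and $\nabla\chi_r^k\cdot X$ in $L_2$ by Lemma \ref{lemma:phit}), or interpret the $\partial_X$-terms in the distributional sense delivered by the flow. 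All remaining terms converge in $L_2(\Omega)$ by Cauchy--Schwarz once the factors do, so no further subtlety arises.
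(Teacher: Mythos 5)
Your proposal is correct and follows essentially the same route as the paper: the paper likewise obtains the formula by differentiating the transported cost functional $J(\Omega_t)=\sum_k\int_\Omega\xi(t)|\ub^{k,t}-\ub_r^k\circ\Phi_t|^2\dx+\ldots$ term by term using the previously established strong material derivatives, and then concludes $dJ(\Omega)(X)\geq 0$ from minimality and the positive $1$-homogeneity of Lemma \ref{lemma:pos1hom}. Your observation that the stated regularity $\ub_r^k\in L_2(D;\R^2)$, $\chi_r^k\in L_2(D)$ is not by itself enough to give $(\ub_r^k\circ\Phi_t-\ub_r^k)/t\to\partial_X\ub_r^k$ strongly in $L_2$ (Lemma \ref{lemma:phit} (ii) needs $W^1_\mu$-regularity) is a legitimate point that the paper passes over silently, and your suggested fix is the natural one.
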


\appendix

\section{Polyhedricity of upper obstacle sets in $H^{1}(\Omega)$}
In the remaining part of this subsection we will sketch the proofs for the characterisation
of the tangential and normal cones as well as of the polyhedricity of $K_\psi$
for reader's convenience
since such obstacles sets are usually considered in the space
\red $$
	\ac H^1(\Omega) := \overline{C^\infty_c(\Omega)}^{\|\cdot\|_{H^1}}
$$
\black
in the literature.
The adaption to $H^1(\Omega)$ requires some careful modifications in the proofs.

Furthermore we denote with $M_+(\ol\Omega)$ the Radon measures on $\ol\Omega$.
The Riesz representation theorem for local compact Hausdorff spaces
(see \cite[Theorem VIII.2.5]{El00}) states that
for each non-negative functional $I:C(\ol\Omega)\to\R$
there exists a unique Radon measure $\mu\in M_+(\ol\Omega)$
such that for all $f\in C(\ol\Omega)$
\begin{align}
	I(f)=\int_{\ol\Omega}f\mathrm d\mu.
	\label{Riesz}
\end{align}
In the sequel we will use the following notation for the half space
\begin{align*}
	H_+^1(\Omega):=\big\{v\in H^1(\Omega):\,v\geq 0\aein\Omega\big\}.
\end{align*}
With the help of the Riesz representation theorem
we are now in the position to give a characterisation of
(cf. \cite[Chapter 6.4.3]{bosha}
for $\ac H^1(\Omega)$ instead of $H^1(\Omega)$)
\begin{align*}
	H^1(\Omega)_+^*:=\big\{I\in H^1(\Omega)^*:\,\langle I,v\rangle_{H^1(\Omega)}\geq 0\text{ for all }v\in H_+^1(\Omega)\big\},
\end{align*}
\red where $H^1(\Omega)^*$ denotes the topological dual space of $H^1(\Omega)$.\black
\begin{lemma}
\label{lemma:identification}
	We have
	\begin{align}
		H^1(\Omega)_+^*
		=\Big\{I\in H^1(\Omega)^*:\,\exists!\,\mu_I\,\in M_+(\ol\Omega),\,
			\forall v\in H^1(\Omega)\cap C(\ol\Omega),\,\langle I,v\rangle_{H^1(\Omega)}=\int_{\ol\Omega}v\,\mathrm d\mu_I\Big\}.
		\label{H1posChar}
	\end{align}
\end{lemma}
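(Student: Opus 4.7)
\medskip

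\noindent\textbf{Proof proposal.}
The plan is to establish the two inclusions of \eqref{H1posChar} separately, with the delicate direction being "$\subseteq$", where one has to manufacture a Radon measure from the functional $I$.

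For the inclusion "$\supseteq$", I would start from an $I\in H^1(\Omega)^*$ admitting a representation $\langle I,v\rangle_{H^1(\Omega)}=\int_{\ol\Omega}v\,\mathrm d\mu_I$ for every $v\in H^1(\Omega)\cap C(\ol\Omega)$ with some $\mu_I\in M_+(\ol\Omega)$. For arbitrary $v\in H_+^1(\Omega)$ I would approximate $v$ by a sequence $(v_n)\subseteq H^1(\Omega)\cap C(\ol\Omega)$ with $v_n\ge 0$ on $\ol\Omega$ converging to $v$ in $H^1(\Omega)$; such approximants are obtained via mollification of a truncation (or, since $\Omega$ is Lipschitz, via a Meyers--Serrin/extension argument followed by taking positive parts and regularising). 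Since $\mu_I$ is non-negative, $\int_{\ol\Omega}v_n\,\mathrm d\mu_I\ge 0$ for every $n$, and continuity of $I$ in $H^1(\Omega)$ then gives $\langle I,v\rangle_{H^1(\Omega)}\ge 0$, so $I\in H^1(\Omega)_+^*$.

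For the non-trivial inclusion "$\subseteq$", let $I\in H^1(\Omega)_+^*$ and consider its restriction to the subspace $\mathcal D:=H^1(\Omega)\cap C(\ol\Omega)$. The key observation is that the constant function $\mathbf 1$ lies in $\mathcal D$, and for any $f\in \mathcal D$ with $\|f\|_{C(\ol\Omega)}\le M$ one has $M\mathbf 1\pm f\in H_+^1(\Omega)$, so positivity of $I$ yields
\begin{equation*}
  |\langle I,f\rangle_{H^1(\Omega)}|\le M\,\langle I,\mathbf 1\rangle_{H^1(\Omega)}.
\end{equation*}
Hence $I|_{\mathcal D}$ is continuous with respect to the sup-norm. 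Since $C^\infty(\ol\Omega)\subseteq\mathcal D$ and $C^\infty(\ol\Omega)$ is dense in $C(\ol\Omega)$ in the sup-norm, $I|_{\mathcal D}$ extends uniquely to a continuous linear functional $\tilde I$ on $(C(\ol\Omega),\|\cdot\|_\infty)$. The same sandwich estimate passes to the closure, so $\tilde I$ is a positive functional on $C(\ol\Omega)$. I would then invoke the Riesz representation theorem in the form \eqref{Riesz} to obtain a unique $\mu_I\in M_+(\ol\Omega)$ with $\tilde I(f)=\int_{\ol\Omega}f\,\mathrm d\mu_I$ for all $f\in C(\ol\Omega)$, which by construction coincides with $\langle I,v\rangle_{H^1(\Omega)}$ on $v\in\mathcal D$. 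Uniqueness of $\mu_I$ in \eqref{H1posChar} is inherited from the uniqueness statement of the Riesz theorem, since two candidate measures would agree on $C(\ol\Omega)$ via the density argument above.

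The main obstacle is the extension step in the "$\subseteq$"-direction: one needs $\mathbf 1\in H^1(\Omega)$ (which requires $\Omega$ bounded, as assumed) and density of $\mathcal D$ in $C(\ol\Omega)$ in the sup-norm; both are ensured by the Lipschitz regularity of $\Omega$. The rest of the argument is a routine application of Riesz--Markov and an approximation of non-negative $H^1$-functions by continuous non-negative $H^1$-functions.
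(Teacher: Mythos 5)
Your proposal is correct and follows essentially the same route as the paper: both directions are handled identically, with the "$\subseteq$"-inclusion resting on the observation that positivity of $I$ together with $\mathds 1\in H^1(\Omega)$ forces $|\langle I,f\rangle|\le\|f\|_\infty\langle I,\mathds 1\rangle$ on $H^1(\Omega)\cap C(\ol\Omega)$, followed by a density extension to $C(\ol\Omega)$ and the Riesz representation theorem. The only cosmetic difference is that you obtain the sup-norm bound via the sandwich $M\mathds 1\pm f\ge 0$, whereas the paper derives the same estimate through the decomposition into positive and negative parts.
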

\begin{proof}
	Let $I:H^1(\Omega)\to\R$ be a non-negative\red, linear and continuous \black functional.
	Then \red in particular \black the restriction $I|_{H^1(\Omega)\cap C(\ol\Omega)}$ is a
	non-negative \red and linear \black functional on the space $H^1(\Omega)\cap C(\ol\Omega)=:Y$.
	
	Now let $y\in Y$ be arbitrary. Then
	$y^+:=\max\{0,y\}$ and $y^-:=\min\{0,y\}$ (defined in a pointwise sense) are also in $Y$ and we find
	by non-negativity of $L\red :=I|_Y\black$:
	\begin{align*}
		|Ly|=|L(y^++y^-)|=|\underbrace{L(y^+)}_{\geq 0}+\underbrace{L(y^-)}_{\leq 0}|
		\leq{}& |\underbrace{L(y^+)}_{\geq 0}\underbrace{-L(y^-)}_{\geq 0}|\\
		\leq{}& |L(y^+-y^-)|=L(|y|)\\
		={}&\underbrace{L(|y|-\mathds{1}\|y\|_\infty)}_{\leq 0}+\|y\|_\infty L(\mathds 1)\\
		\leq{}&\|y\|_\infty L(\mathds 1),
	\end{align*}
	\red where $\mathds 1$ denotes the constant mapping with $\mathds 1(x):=1$.
	\black
	Thus $I|_Y$ is continuous in the $C(\ol\Omega)$-topology.
	Since $Y$ is also dense in $C(\ol\Omega)$ the functional
	$I|_Y$ has a unique continuous and non-negative
	extension $\tilde I:C(\ol\Omega)\to\R$ over $C(\ol\Omega)$.
	By the Riesz representation theorem (see \eqref{Riesz}) we find a \red unique \black
	$\mu\in M_+(\ol\Omega)$ such that $\red \tilde I\black(v)=\int_{\ol\Omega}v\,\mathrm d\mu$
	for all $v\in C(\ol\Omega)$.
	
	Conversely, let $I$ be in the set on the right-hand side of \eqref{H1posChar}.
	Then we know $\langle I,v\rangle_{H^1(\Omega)}=\int_{\ol\Omega}v\,\mathrm d\mu_I\geq 0$
	for all $v\in Y_+:=\{v\in Y:\,v\geq 0$ pointwise in $\ol\Omega\}$.
	So by density of $Y_+$ in $H_+^1(\Omega)$ we obtain $I\in H^1(\Omega)_+^*$.
\end{proof}
\begin{remark}
	Note that, by an abuse of notation,
	the right-hand side of \eqref{H1posChar}
	is sometimes written as
	$H^1(\Omega)^*\cap M_+(\ol\Omega)$
	(see, e.g., \cite[Chapter 6]{bosha}).
\end{remark}
For the notion of \textit{capacity of a set}, \textit{quasi-everywhere (q.e.)}
and \textit{quasi-continuous representant}
\red we refer \black to \cite[Chapter 3.3]{HP05}.
The following result is an extension of \eqref{Riesz}
valid for elements from $H^1(\Omega)_+^*$.
\begin{lemma}
\label{lemma:muInt}
	For all $ I\in H^1(\Omega)_+^*$ and all $f\in H^1(\Omega)$
	\red there exists \black
	$\tilde f\in L_1(\ol\Omega,\mu_I)$ and \red we have\black
	\begin{align}
		\langle I,f\rangle_{H^1(\Omega)}=\int_{\ol\Omega} \tilde f\mathrm d\mu_I,
		\label{muFInt}
	\end{align}
	where $\tilde f$ (defined on $\ol\Omega$) denotes a quasi-continuous representative of $f$
	and $\mu_I$ the measure from \eqref{H1posChar} of Lemma \ref{lemma:identification}.
\end{lemma}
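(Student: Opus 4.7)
The plan is to extend identity \eqref{muFInt}, which holds by Lemma \ref{lemma:identification} on the dense subspace $Y:=H^1(\Omega)\cap C(\ol\Omega)$, to all of $H^1(\Omega)$ by a density argument. The identity makes sense on $Y$ without mentioning quasi-continuous representatives because continuous functions are their own quasi-continuous representatives. For general $f\in H^1(\Omega)$, the appropriate pointwise object on $\ol\Omega$ is the quasi-continuous representative $\tilde f$, and the bridge between $H^1$-convergence and $\mu_I$-pointwise convergence is the fact that $\mu_I$ does not charge sets of zero capacity.

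First I would prove the auxiliary statement: for every $I\in H^1(\Omega)_+^*$ the associated Radon measure $\mu_I$ does not charge polar sets, i.e.\ $\mu_I(N)=0$ whenever $\capa(N)=0$. This follows by the outer regularity of capacity combined with Lemma \ref{lemma:identification}: for any compact $K\subseteq\ol\Omega$ with $\capa(K)=0$ one can find $v_n\in Y$ with $v_n\geq\mathds 1_K$ and $\|v_n\|_{H^1}\to 0$; then
\begin{equation*}
\mu_I(K)\leq \int_{\ol\Omega}v_n\,\mathrm d\mu_I=\langle I,v_n\rangle_{H^1(\Omega)}\leq \|I\|_{H^1(\Omega)^*}\|v_n\|_{H^1}\to 0.
\end{equation*}
An outer-regularity step then upgrades this to arbitrary polar sets.

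Next, given $f\in H^1(\Omega)$, I would choose by density a sequence $(f_n)\subseteq Y$ with $f_n\to f$ in $H^1(\Omega)$. A standard result from potential theory gives, up to a subsequence, $f_n\to\tilde f$ quasi-everywhere on $\ol\Omega$, and the first step then yields $f_n\to\tilde f$ $\mu_I$-almost everywhere. To obtain $\tilde f\in L_1(\ol\Omega,\mu_I)$ together with convergence of the integrals, I would show that $(f_n)$ is Cauchy in $L_1(\ol\Omega,\mu_I)$. Decomposing $f_n-f_m=(f_n-f_m)^+-(f_n-f_m)^-$ pointwise (the positive and negative parts again lie in $Y$, and truncation is non-expansive on $H^1$), I obtain
\begin{equation*}
\int_{\ol\Omega}|f_n-f_m|\,\mathrm d\mu_I=\langle I,(f_n-f_m)^+\rangle_{H^1}+\langle I,(f_n-f_m)^-\rangle_{H^1}\leq 2\|I\|_{H^1(\Omega)^*}\|f_n-f_m\|_{H^1}.
\end{equation*}
Hence $(f_n)$ is Cauchy in $L_1(\ol\Omega,\mu_I)$ and converges to some $g\in L_1(\ol\Omega,\mu_I)$; by the $\mu_I$-almost-everywhere convergence $g=\tilde f$ $\mu_I$-a.e. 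Finally, passing to the limit in $\langle I,f_n\rangle_{H^1(\Omega)}=\int_{\ol\Omega}f_n\,\mathrm d\mu_I$ proves \eqref{muFInt}.

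The main obstacle is the auxiliary claim that $\mu_I$ is absolutely continuous with respect to capacity, because this is what makes ``quasi-everywhere'' convergence usable $\mu_I$-almost everywhere and therefore makes the quasi-continuous representative $\tilde f$ a well-defined element of $L_1(\ol\Omega,\mu_I)$ independent of the approximating sequence. Once that is in place, the remainder is a routine density/truncation argument exploiting the non-negativity of $I$ in a symmetric way on the positive and negative parts, which controls the $L_1(\mu_I)$-norm by the $H^1$-norm.
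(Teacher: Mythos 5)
Your proposal is correct and follows essentially the same route as the paper's own proof: approximation of $f$ by functions in $H^1(\Omega)\cap C(\ol\Omega)$ converging in $H^1$ and quasi-everywhere, the absolute continuity of $\mu_I$ with respect to capacity (the paper's Claim~2, proved there with a Lipschitz cut-off and the sign of $I$ rather than by mollifying the capacity test functions, a cosmetic difference), and the Cauchy estimate in $L_1(\ol\Omega,\mu_I)$ via the non-negativity of $I$ (the paper tests with $|f_n-f_m|$ directly instead of splitting into positive and negative parts). No substantive deviation.
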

\begin{proof}
	The proof of this lemma requires some modifications
	of \cite[Lemma 6.56]{bosha} and references therein
	which were designed \red for \black the situation $V=\ac H^1(\Omega)$.
	In our case we will need the following auxiliary results:
	\begin{itemize}
		\item[(a)]
			For an arbitrary $D\subseteq\R^d$ the capacity of $D$ calculates as
			\begin{align*}
				\text{cap}(D)
				=\inf\big\{\|v\|_{H^1(\R^d)}^2:\,v\in H^1(\R^d)\text{ and }v\geq 1\aein\text{a neighborhood of }D\big\}.
			\end{align*}
			See \cite[Proposition 3.3.5]{HP05} for a proof.
		\item[(b)]
			Any function $f\in H^1(\Omega)$ can be approximated by a sequence
			$\{f_n\}\subseteq C_c^\infty(\R^d)$ in the sense that $f_n\to f$ in $H^1(\R^d)$
			as $n\to\infty$ by extending $f$ to $\R^d$ with compact support
			and then uses an approximation argument via Friedrichs mollifiers.
	\end{itemize}
	The proof carried out in the following steps on the basis of \cite[Lemma 6.56]{bosha}
	and the references therein (see also \cite[Th\'eor\`eme 3.3.29]{HP05} for the case $V=H^1(\R^d)$):
	\vspace*{0.5em}\\\textit{\underline{Claim 1:}
		There exists a sequence $\{f_n\}\subseteq C_c^\infty(\R^d)$
		s.t. $f_n|_{\Omega}\to \tilde f$ in $H^1(\Omega)$ and q.e. in $\ol\Omega$}\vspace*{0.5em}\\
	Let $\{f_n\}$ be given by (b).
	By resorting to a subsequence (we omit the subscript) we may find
	$\|f_n-f\|_{H^1(\R^d)}\leq 2^{-n}n^{-1}$ and therefore
	\begin{align}
		\sum_{n=1}^\infty 4^{n+1}\|f_{n+1}-f_{n}\|_{H^1(\R^d)}^2
			\leq\sum_{n=1}^\infty 4^{n+1}(\|f_{n+1}-f\|_{H^1(\R^d)}+\|f_{n}-f\|_{H^1(\R^d)}\big)^2<+\infty.
		\label{fSeq}
	\end{align}
	We define
	\begin{align*}
			B_n:=\big\{x\in\R^d:\,|f_{n+1}(x)-f_n(x)|\geq 2^{-n}\big\}.
	\end{align*}
	Since $|f_{n+1}-f_n|$ is a continuous with compact support in $\R^d$,
	the set $B_n$ is compact and
	\begin{align*}
		2^{n+1}|f_{n+1}-f_n|\geq 1\;\text{ holds in a neighborhood of }B_n.
	\end{align*}
	Thus by (a)
	\begin{align*}
		\capa(B_n)\leq 4^{n+1}\|f_{n+1}-f_n\|_{H^1(\R^d)}^2.
	\end{align*}
	Using this estimate, the sub-additivity of the capacity (see \cite[Remarque 3.3.10]{HP05})
	and \eqref{fSeq},
	we obtain:
	\begin{align}
		\capa\big(\bigcup_{k=n}^\infty B_k\big)
		\leq\sum_{k=n}^\infty\capa(B_k)
		\leq\sum_{k=n}^\infty 4^{n+1}\|f_{n+1}-f_n\|_{H^1(\R^d)}^2\to 0\quad\text{as }n\to\infty.
		\label{capEst}
	\end{align}
	Now let $n\in\N$ and $x\in\ol\Omega\setminus \bigcup_{k=n}^\infty B_k$
	be arbitrary.
	Then
	$\{f_k(x)\}_{k\geq n}$ is a Cauchy sequence since for all $m\geq n$:
	$$
		|f_m(x)-f_n(x)|\leq\sum_{k=n}^{m-1}|f_{k+1}(x)-f_k(x)|
		\leq \sum_{k=n}^{m-1}2^{-k}.
	$$
	We denote the limit with $\tilde f(x)$ and gain for all $N,K\geq n$:
	$$
		|\tilde f(x)-f_N(x)|
			\leq\underbrace{|\tilde f(x)-f_{K+1}(x)|}_{\to 0\text{ as }K\to\infty}
			+\sum_{k=N}^K \underbrace{|f_{k+1}(x)-f_k(x)|}_{\leq 2^{-k}\text{ since }x\in\ol\Omega\setminus \bigcup_{k=n}^\infty B_k}
	$$
	\red Passing to the limit \black $K\to\infty$ then shows
	$$
		|\tilde f(x)-f_N(x)|
			\leq \sum_{k=N}^\infty 2^{-k}.
	$$
	This estimate implies that $\{f_N\}_{N\geq n}$
	converges uniformly to $\tilde f$ on the set $\ol\Omega\setminus \bigcup_{k=n}^\infty B_k$.
	Due to \eqref{capEst} we obtain Claim 1.
	\vspace*{0.5em}\\\textit{\underline{Claim 2:}
		If $\capa(A)=0$ for a Borel set $A\subseteq\ol\Omega$ than $\mu_I(A)=0$.
		}\vspace*{0.5em}\\
	Let $\varepsilon>0$ be arbitrary.
	By (a) we find a function $u\in H^1(\R^d)$ such that
	$\|u\|_{H^1(\Omega)}<\varepsilon$ and
	$u\geq 1$ a.e. on $A_\varepsilon$ where $A_\varepsilon$
	is a neighborhood of $A$.
	Thus there exists a Lipschitz function
	$f_\varepsilon:\R^d\to[0,1]$ such that
	\begin{align*}
		f_\varepsilon(x)=
		\begin{cases}
			0&\text{ if }x\in\R^d\setminus A_\varepsilon,\\
			\in(0,1)&\text{ if }x\in A_\varepsilon\setminus A,\\
			1&\text{ if }x\in A.
		\end{cases}
	\end{align*}
	Then $f_\e-u\leq 0$ a.e. in $\Omega$ and by Lemma \ref{lemma:identification}
	\begin{align*}
		\mu_I(A)=\int_{A}\mathds 1\mathrm d\mu_I
		\leq\int_{\ol\Omega}f_\e\,\mathrm d\mu_I
		=\langle I,f_\e\rangle_{H^1(\Omega)}
		={}&\langle I,u\rangle_{H^1(\Omega)}
			+\underbrace{\langle I,f_\e-u\rangle_{H^1(\Omega)}}_{\leq\,0\text{ since }f_e\leq\,u\aein\Omega}\\
		\leq{}&\langle I,u\rangle_{H^1(\Omega)}\\
		\leq{}&\varepsilon\|I\|_{H^1(\Omega)^*}.
	\end{align*}
	\red Passing to the limit \black $\varepsilon\searrow 0$ yields to claim.
	\vspace*{0.5em}\\\textit{\underline{Claim 3:}}
		$f_n\to \tilde f$ in $L^1(\ol\Omega,\mu_I)$\\
	Lemma \ref{lemma:identification} implies for every $n,m\in\N$
	\begin{align}
		\int_{\ol\Omega}|f_n-f_m|\mathrm d\mu_I
		=\langle I,|f_n-f_m|\rangle_{H^1(\Omega)}
		\leq\|I\|_{H^1(\Omega)^*}\|f_n-f_m\|_{H^1(\Omega)},
		\label{fDiffEst}
	\end{align}
	where $f_n$ is the approximation sequence from Claim 1.
	Since $f_n\to f$ in $H^1(\Omega)$ we obtain from \eqref{fDiffEst}
	that $\{f_n\}$ is a Cauchy sequence in $L^1(\ol\Omega,\mu_I)$.
	Thus there exists a limit element $\tilde g\in L^1(\ol\Omega,\mu_I)$
	and a subsequence (we omit the subscript) such that
	$f_n\to\tilde g$ in $L^1(\ol\Omega,\mu_I)$ and pointwise $\mu_I$-a.e.
	on $\ol\Omega$.
	However, by Claim 1, we already know that $f_n$ converges q.e. to $\tilde f$
	on $\ol\Omega$ and, by Claim 2, we find that this covergence is
	also $\mu_I$-a.e.
	Thus $\tilde f=\tilde g$ $\mu_I$-a.e.
	\vspace*{0.5em}\\\textit{\underline{Conclusion:}}\\
	Finally, Lemma \ref{lemma:identification} shows for every $n\in\N$
	\begin{align*}
		\langle I,f_n\rangle_{H^1(\Omega)}=\int_{\ol\Omega} f_n\mathrm d\mu_I.
	\end{align*}
	With the properties proven above we can pass to the limit $n\to\infty$
	and obtain \eqref{muFInt}.
\end{proof}
We are now in a position to characterise the
tangential and normal cones in $K_\psi$.
\begin{proof}[Proof of Theorem \ref{thm:TN}]
	From the definitions \eqref{clarke_cone}-\eqref{normal_cone} we see that
	$$
		T_y(K_\psi)=T_{y-\psi}(K),\quad N_y(K_\psi)=N_{y-\psi}(K)
	$$
	with $K:=\{w\in H^1(\Omega):\;w\leq 0\text{ a.e. in }\Omega\}$.
	Thus it suffices to prove the assertion for $K_\psi=K$.
	
	We firstly prove \eqref{normalSpace}.
	
	\noindent
	``$\subseteq$'':
	Let $I\in N_y(K)$. Then by using definition  \eqref{normal_cone} and choosing $v=y+w$ for an arbitrary $w\in H^1(\Omega)$ with $w\leq 0$ a.e. we obtain
	$\langle I,w\rangle_{H^1(\Omega)}\leq 0$.
	Thus $I\in H^1(\Omega)_+^*$ and by Lemma \ref{lemma:identification}
	we find the associated measure $\mu_I$ from \eqref{H1posChar}.
	On the other hand by choosing $v=\psi$ and $v=2y$ in \eqref{normal_cone} yields
	$\langle I,y\rangle_{H^1(\Omega)}=0$.
	From Lemma \ref{lemma:muInt} we obtain
	\begin{align}
		\int_{\ol\Omega} \tilde y\,\mathrm d\mu_I=0\quad
		\text{with a quasi-continuous representant $\tilde y$ of }y.
		\label{intZero}
	\end{align}
	Since $y\leq 0$ a.e. in $\Omega$ we find $\tilde y\leq 0$ q.e. in $\ol\Omega$
	(see \cite[Remarque 3.3.6]{HP05}).
	This implies in combination with \eqref{intZero} that
	$\int_{\ol\Omega}|\tilde y|\,\mathrm d\mu_I=0$.
	Thus $\int_{\{\tilde y < 0\}}|\tilde y|\,\mathrm d\mu_I=0$
	and therefore $\mu_I(\{\tilde y < 0\})=0$.

	\noindent
	``$\supseteq$'':
	Let $I\in H^1(\Omega)_+^*$ with $\mu_I(\{\tilde y<0\})=0$.
	Now let $v\in K$ be arbitrary. The splitting $v=\max\{v,y\}+\min\{0, v-y\}$
	implies
	\begin{align*}
		\langle I,v-y\rangle_{H^1(\Omega)}={}&
			\langle I,\max\{v,y\}-y\rangle_{H^1(\Omega)}
			+\underbrace{\langle I,\min\{0, v-y\}\rangle_{H^1(\Omega)}}_{\leq 0}\\
		\leq{}& \int_{\{\tilde y=0\}}\max\{\tilde v,\tilde y\}-\tilde y\,\mathrm d\mu_I
			+\underbrace{\int_{\{\tilde y<0\}}\max\{\tilde v,\tilde y\}-\tilde y\,\mathrm d\mu_I}_{=0\text{ since }\mu_I(\{\tilde y<0\})=0}\\
		\leq{}& \int_{\{\tilde y=0\}}\underbrace{\max\{\tilde v,0\}}_{=0\text{ since }v\in K}\,\mathrm d\mu_I=0.
	\end{align*}
	Hence $I\in N_y(K)$.
	
	Now we prove \eqref{tangSpace}.
	By applying the bipolar theorem as in \eqref{bipolar} as well as Lemma \ref{lemma:muInt},
	we find
	\begin{align*}
		&T_y(K)=\Big\{u\in H^1(\Omega):\,\int_{\ol\Omega} \tilde u\,\mathrm d\mu_I\leq 0\text{ for all }I\in H^1(\Omega)_+^*\text{ with }
			\mu_I(\{\tilde y<0\})=0\Big\}\\
		&\quad=\Big\{u\in H^1(\Omega):\,\int_{\{\tilde y=0\}}\tilde u\,\mathrm d\mu_I\leq 0\text{ for all }I\in H^1(\Omega)_+^*\text{ with }
			\mu_I(\{\tilde y<0\})=0\Big\}.
	\end{align*}
	From this representation we see that the ``$\supseteq$''-inclusion in \eqref{tangSpace}
	is fulfilled.
	Conversely, let $u\in T_y(K)$. By definition of $T_y(K)$ given in \eqref{tangent_cone} we find
	a sequence $v_n\in K$ and $t_n>0$ such that
	$t_n(v_n-y)\to u$ in $H^1(\Omega)$ as $n\to\infty$. This implies for a subsequence
	(we omit the subindex)
	$t_n(\tilde v_n-\tilde y)\to \tilde u$ q.e. in $\ol\Omega$.
	Since $v_n\in K$ we see that
	\begin{align*}
		t_n(\tilde v_n-\tilde y)=t_n \tilde v_n\leq 0\text{ q.e. on }\{\tilde y=0\}.
	\end{align*}
	Thus $\tilde u\leq 0$ q.e. on $\{\tilde y=0\}$.
\end{proof}
\begin{proof}[Proof of Theorem \ref{theorem:Kpolyhedr}]
	Let $y$ and $w$ as in \eqref{polyhedric_set} and let
	$v\in T_y(K_\psi)\cap [w]^\perp$.
	Then there exists a sequence $v_n\to v$ strongly in $H^1(\Omega)$
	such that $v_n\in C_y(K_\psi)$.
	Define
	$$
		\red \hat v_n\black:=\max\{v_n,v\}.
	$$
	By resorting to quasi-continuous representants we find by Theorem \ref{thm:TN}
	\begin{align*}
		v\leq 0\text{ q.e. in }\{y=\red\psi\black\}\quad\text{ and }\quad v_n\leq 0\text{ q.e. in }\{y=\red\psi\black\}
	\end{align*}
	and thus
	\begin{align*}
		\red \hat v_n\black\leq 0\text{ q.e. in }\{y=\red\psi\black\}.
	\end{align*}
	Moreover by definition of $\red\hat v_n\black$
	\begin{align*}
		v-\red\hat v_n\black\leq 0\text{ q.e. in }\Omega.
	\end{align*}
	Invoking Theorem \ref{thm:TN} again yield $\red\hat v_n\black\in T_y(K_\psi)$
	and $v-\red\hat v_n\black\in T_y(K_\psi)$.
	Since $w\in N_y(K_\psi)$ we see by \eqref{normal_cone_repr} that
	\begin{align*}
		\langle w, \red\hat v_n\black\rangle\leq 0
		\quad\text{and}\quad
		\langle w, v-\red\hat v_n\black\rangle\leq 0.
	\end{align*}
	Taking also $\langle w, v\rangle=0$ into account we obtain from above that
	$\langle w, \red\hat v_n\black\rangle=0$. Thus $\red\hat v_n\black\in C_y(K_\psi)\cap[w]^\perp$.
	\red Since $\hat v_n$ converges strongly to $v$ as $n\to\infty$, we have proven
	$$
		T_y(K_\psi)\cap [w]^\perp\subseteq \ol{C_y(K_\psi)\cap[w]^\perp}.
	$$
	Noticing that the ``$\supseteq$''-inclusion is always satisfied finishes the proof.
\end{proof}

\section{Eulerian semi and shape derivatives}
\label{sec:Euler}
We recall some preliminaries from shape optimisation theory.
For more details we refer to \cite{DelZol11}.

Let $X:\R^d\rightarrow \R^d$ be a vector field 
satisfying a global Lipschitz condition: there is a constant $L>0$ 
such that 
$$ |X(x)-X(y)| \le L|x-y| \quad \text{ for all } x,y\in \R^d .$$ 
Then  we 
associate with $X$ the flow $\Phi_t$ by solving for all $x\in \R^d$
\begin{align}
	\frac{d}{dt} \Phi_t(x) = X(\Phi_t(x)) \;\text{ on } [-\tau,\tau], \quad \Phi_0(x)=x.
	\label{PhiFlow}
\end{align}
The global existence of the flow $\Phi:\R\times \R^d \rightarrow \R^d$
is ensured by the theorem of Picard-Lindel\"of.

Subsequently, we restrict ourselves to a special class  of vector fields, namely 
$C^k$-vector fields with compact support in some fixed set. 
To be more precise for a fixed  open set $D\subseteq \R^d$, we consider vector 
fields belonging to  $C^k_c(D,\R^d)$.
We equip the space $C^k_c(D,\R^d)$ respectively $C^\infty_c(D,\R^d)$ with the topology induced 
by the following  family of semi-norms: for each compact $K\subseteq D$ and muli-index 
$\alpha\in \N^d$ with $|\alpha| \le k$ we define
$ \|f\|_{K,\alpha} := \sup_{x\in K} |\partial^\alpha f(x) |. $
With this familiy of semi-norms the space $C^k_c(D,\R^d)$ becomes a locally convex 
vector space.  

Next, we recall the definition of the Eulerian semi-derivative.

\begin{definition}\label{def1} 
	Let $D\subseteq \R^d$ be an open set.  Let $J:\Xi \rightarrow \R$ be a shape function defined on a set $\Xi$ of subsets
	of $D$ and fix  $k\ge 1$. Let $\Omega\in \Xi$ and $X \in C^k_c(D,\R^d)$ be such that 
	$\Phi_t(\Omega) \in \Xi$ for all 
	$t >0$ sufficiently small.   
	Then the  Eulerian semi-derivative of $J$ at $\Omega$ in direction $X$ is defined by
	\begin{equation}
	dJ(\Omega)(X):= \lim_{t \searrow 0}\frac{J(\Phi_t(\Omega))-J(\Omega)}{t}.
	\end{equation}
	\begin{itemize}
		\item[(i)] The function $J$ is said to be \textit{shape differentiable} at $\Omega$ if  $dJ(\Omega)(X)$ exists for all $X \in C^\infty_c(D,\R^d)$ and 
		$ X     \mapsto dJ(\Omega)(X) $ 
		is linear and continuous on $C^\infty_c(D,\R^d)$.
		\item[(ii)]  The smallest integer  $k\ge 0$ for which $X \mapsto dJ(\Omega)(X)$ is continuous with respect to the $C^k_c(D,\R^d)$-topology is called the order of $dJ(\Omega)$. 
	\end{itemize}
\end{definition}
The set $D$ in the previous definition is usually called hold-all domain or 
hold-all set or universe.

In the case that the state system is given as a solution of a variational
inequality we cannot expect $dJ(\Omega)(X)$ to be linear in $X$.
However we have the following general result:
\begin{lemma}
\label{lemma:pos1hom}
	Suppose that the Eulerian semi-derivative $dJ(\Omega)(X)$ exists
	for all $X \in C^k_c(D,\R^d)$.
	Then $dJ(\Omega)(\cdot)$ is positively 1-homogeneous.
\end{lemma}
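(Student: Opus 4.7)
The plan is to exploit the time-rescaling property of the flow: if $\Phi_t^{X}$ denotes the flow generated by a vector field $X$ as in \eqref{PhiFlow}, then for any $\alpha>0$ the flow generated by $\alpha X$ is obtained by a linear time-reparametrisation, namely $\Phi_t^{\alpha X}=\Phi_{\alpha t}^{X}$. Once this identity is established, positive $1$-homogeneity of $dJ(\Omega)(\cdot)$ follows by a straightforward change of variables in the difference quotient.

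First, I would verify the time-rescaling identity. Define $\Psi_t(x):=\Phi_{\alpha t}^{X}(x)$. A direct differentiation gives
\[
	\frac{d}{dt}\Psi_t(x)=\alpha\,X(\Phi_{\alpha t}^{X}(x))=(\alpha X)(\Psi_t(x)),\qquad \Psi_0(x)=x,
\]
so $\Psi_t$ solves the initial value problem defining $\Phi_t^{\alpha X}$. By uniqueness of solutions to \eqref{PhiFlow} (Picard--Lindel\"of, which is applicable since $\alpha X\in C_c^k(D,\R^d)\subseteq \mathrm{Lip}(\R^d,\R^d)$), we conclude $\Phi_t^{\alpha X}=\Phi_{\alpha t}^{X}$ for all $t$ in the common interval of existence. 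In particular $\Phi_t^{\alpha X}(\Omega)=\Phi_{\alpha t}^{X}(\Omega)$.

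Next, I would apply this identity inside the defining limit of the Eulerian semi-derivative. For $\alpha>0$,
\[
	dJ(\Omega)(\alpha X)=\lim_{t\searrow 0}\frac{J(\Phi_t^{\alpha X}(\Omega))-J(\Omega)}{t}
	=\lim_{t\searrow 0}\frac{J(\Phi_{\alpha t}^{X}(\Omega))-J(\Omega)}{t}.
\]
Substituting $s:=\alpha t$ (valid since $\alpha>0$, so $s\searrow 0$ iff $t\searrow 0$) yields
\[
	dJ(\Omega)(\alpha X)=\alpha\lim_{s\searrow 0}\frac{J(\Phi_s^{X}(\Omega))-J(\Omega)}{s}=\alpha\,dJ(\Omega)(X).
\]
The case $\alpha=0$ is immediate from the definition. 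This establishes the asserted positive $1$-homogeneity.

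There is no real obstacle here; the only delicate point is to have the rescaling identity available on a common time interval around $0$, which is guaranteed by the global Lipschitz assumption on compactly supported $C^k$-vector fields and by working with $t$ restricted to a sufficiently small neighbourhood of $0$ (so that $\alpha t$ also lies in the interval where the limit in Definition \ref{def1} is taken and $\Phi_{\alpha t}(\Omega)\in\Xi$). Linearity in $X$ is of course not claimed, as expected for shape problems constrained by variational inequalities.
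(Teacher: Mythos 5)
Your proof is correct and follows essentially the same route as the paper: both establish the time-rescaling identity $\Phi_t^{\alpha X}=\Phi_{\alpha t}^{X}$ via uniqueness of the flow and then rescale the difference quotient. The only (harmless) additions are your explicit treatment of $\alpha=0$ and the remark about the common interval of existence.
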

\begin{proof}
	Let $\lambda>0$ be arbitrary. We write $\Phi_t^{\lambda X}$ for the flow
	induced by $\lambda X$.
	By definition \eqref{PhiFlow}, we see
	that $\Phi_t^{\lambda X}$ and $\Phi_{\lambda t}^{X}$
	solve
	$$
		\frac{d}{dt} \Phi_t^{\lambda X}(x) = \lambda X(\Phi_t^{\lambda X}(x)),\quad
		\frac{d}{dt} \Phi_{\lambda t}^{X}(x) = \lambda X(\Phi_{\lambda t}^{X}(x))
	$$
	as well as $\Phi_0^{\lambda X}(x)=x$ and $\Phi_0^{X}(x)=x$.
	Uniqueness of the flow implies $\Phi_t^{\lambda X}=\Phi_{\lambda t}^{X}$.
	Finally,
	$$
		dJ(\Omega)(\lambda X)
		=\lim_{t \searrow 0}\frac{J(\Phi_t^{\lambda X}(\Omega))-J(\Omega)}{t}
		=\lim_{t \searrow 0}\frac{J(\Phi_{\lambda t}^{X}(\Omega))-J(\Omega)}{t}
		=\lambda\,dJ(\Omega)(X).
	$$
\end{proof}
\red Ultimately the goal would be to find descent directions of a given shape function $J(\cdot)$, that is, finding
solutions of $\min_{X\in \mathcal H} dJ(\Omega)(X)$ in some Hilbert space $\mathcal H\subset C(\R^d,\R^d)$; cf. \cite{eigelsturm16}. 
Now Lemma \ref{lemma:pos1hom} tells us that it is sufficient to minimise over the unit sphere: 
     $$ \min_{X\in \mathcal H} dJ(\Omega)(X) =   \min_{\substack{X\in \mathcal H\\ \|X\|_{\mathcal H}=1}} dJ(\Omega)(X)$$
     which leads to a simplification of the minimisation problem; cf. \cite{sturm15}.  In the context of variational inequalities it rarely happens that the Eulerian semi-derivative
     is linear, however, the 1-homogeneity is valid as soon as the Eulerian semi-derivative exists.  
\black

The following result can be found for instance in \cite{DelZol11}:
\begin{lemma}\label{lemma:phit}
Let $D\subseteq \R^d$ be open and bounded and suppose $X\in C^1_c(D, \R^d)$.
	\begin{itemize} 
	\item[(i)] We have
	\begin{align*}
		\frac{ \partial \Phi_t - I}{t} \rightarrow & \partial X && \text{ strongly in } C(\overline D, \R^{d,d})\\
		\frac{ \partial \Phi_t^{-1} - I}{t} \rightarrow & - \partial X && \text{ strongly in } C(\overline D, \R^{d,d})\\
	         \frac{ \det(\partial \Phi_t) - 1}{t} \rightarrow & \divv(X) && \text{ strongly in } C(\overline D).
	\end{align*}
\item[(ii)] 	For all open sets $\Omega \subseteq D$ and all $\varphi\in W^1_\mu(\Omega)$, $\mu \ge 1$, we have 
          \begin{align}
            \frac{\varphi\circ \Phi_t  - \varphi}{t} \rightarrow & \nabla \varphi \cdot X
            && \text{ strongly in }  L_\mu(\Omega).
           \end{align}
           	\end{itemize}
\end{lemma}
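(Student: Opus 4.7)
The plan is to exploit the ODE structure of the flow $\Phi_t$ systematically and then use density for the chain rule in Sobolev spaces.

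For (i), I would differentiate the defining equation~\eqref{PhiFlow} in the spatial variable to obtain the matrix-valued initial value problem
\begin{align*}
\frac{d}{dt}\partial\Phi_t(x) = \partial X(\Phi_t(x))\,\partial\Phi_t(x), \qquad \partial\Phi_0(x)=I.
\end{align*}
Since $X\in C^1_c(D,\R^d)$ has compact support, Gronwall's inequality together with standard results on parameter dependence for ODEs yields uniform boundedness and joint continuity of $(t,x)\mapsto\partial\Phi_t(x)$ on $[-\tau,\tau]\times\overline{D}$. Integrating in $t$ and dividing by $t$ gives
\begin{align*}
\frac{\partial\Phi_t(x) - I}{t} = \frac{1}{t}\int_0^t \partial X(\Phi_s(x))\,\partial\Phi_s(x)\ds,
\end{align*}
and since the integrand is jointly continuous in $(s,x)$ with value $\partial X(x)$ at $s=0$, uniform continuity on the compact set $[-\tau,\tau]\times\overline{D}$ yields the first assertion. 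For the second, since $X$ is autonomous the flow satisfies $\Phi_t^{-1}=\Phi_{-t}$, so the claim follows by applying the first assertion with $-t$ in place of $t$. For the determinant, Jacobi's formula combined with the matrix ODE above gives
\begin{align*}
\frac{d}{dt}\det\partial\Phi_t = \mathrm{tr}(\partial X(\Phi_t))\det\partial\Phi_t = \divv(X)(\Phi_t)\det\partial\Phi_t,
\end{align*}
and the same integration-plus-mean-value argument yields the third convergence.

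For (ii), the strategy is approximation by smooth functions. For $\varphi\in C^1$ (extended suitably to a neighbourhood of $\overline{\Omega}$; note that the compact support of $X$ ensures $\Phi_t(\overline{\Omega})\subseteq D$ for small $t$) the classical chain rule along the flow yields the integral identity
\begin{align*}
\varphi(\Phi_t(x)) - \varphi(x) = \int_0^t \nabla\varphi(\Phi_s(x))\cdot X(\Phi_s(x))\ds,
\end{align*}
from which $(\varphi\circ\Phi_t-\varphi)/t\to\nabla\varphi\cdot X$ uniformly on $\overline{\Omega}$, hence in $L_\mu(\Omega)$, follows exactly as in (i). To extend to general $\varphi\in W^1_\mu(\Omega)$ I would exploit that by part (i) the change-of-variables formula gives a uniform bound $\|\psi\circ\Phi_t\|_{L_\mu(\Omega)}\le C\|\psi\|_{L_\mu}$ for small $t$, so $\psi\mapsto \psi\circ\Phi_t$ is uniformly bounded on $L_\mu$. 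Approximating $\varphi$ by smooth $\varphi_n\to\varphi$ in $W^1_\mu$ and writing
\begin{align*}
\frac{\varphi\circ\Phi_t-\varphi}{t} - \nabla\varphi\cdot X
={}& \frac{(\varphi-\varphi_n)\circ\Phi_t - (\varphi-\varphi_n)}{t} \\
& + \Big(\frac{\varphi_n\circ\Phi_t-\varphi_n}{t} - \nabla\varphi_n\cdot X\Big) + \nabla(\varphi_n-\varphi)\cdot X,
\end{align*}
the smooth case controls the middle term; the first and third terms are handled by the uniform $L_\mu$-bounds applied to the integral representation for $\varphi-\varphi_n$.

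The main obstacle will be making rigorous the passage from smooth $\varphi$ to $\varphi\in W^1_\mu$ in part (ii): extending the integral identity to an $L_\mu$-sense for merely Sobolev $\varphi$ via density, and uniformly controlling the resulting difference quotients, are the steps requiring the most care. Part (i), by contrast, is essentially a clean application of ODE techniques once $\partial\Phi_t$ is identified with the solution of a linear matrix ODE with bounded, continuous coefficients.
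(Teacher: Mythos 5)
The paper offers no proof of Lemma \ref{lemma:phit} at all --- it is quoted with a pointer to \cite{DelZol11} --- so there is no in-paper argument to measure you against; your write-up is the standard proof and is essentially correct. For (i), identifying $\partial\Phi_t$ with the solution of the linear matrix ODE $\tfrac{d}{dt}\partial\Phi_t(x)=\partial X(\Phi_t(x))\,\partial\Phi_t(x)$, $\partial\Phi_0=I$, and then combining the integral representation with uniform continuity of the integrand on the compact set $[-\tau,\tau]\times\overline D$ is exactly the textbook route, and Jacobi's formula disposes of the determinant. Two points deserve a sentence each. First, elsewhere in the paper (cf.\ the definition of $A(t)$ in \eqref{abbrev}) the object of interest is the matrix inverse $(\partial\Phi_t)^{-1}$ rather than $\partial(\Phi_t^{-1})=\partial\Phi_{-t}$, which is what your argument via $\Phi_t^{-1}=\Phi_{-t}$ produces; both tend to $-\partial X$, and the matrix-inverse version follows from the first limit and the identity $(\partial\Phi_t)^{-1}-I=-(\partial\Phi_t)^{-1}(\partial\Phi_t-I)$ together with $\partial\Phi_t\to I$ uniformly. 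Second, in (ii) the composition $\varphi\circ\Phi_t$ is only defined on $\Omega$ if $\Phi_t(\Omega)\subseteq\Omega$ or if $\varphi$ is extended beyond $\Omega$; for a general open set $\Omega$ no extension operator is available, so your parenthetical ``extended suitably'' silently assumes $\Omega$ is an extension domain (harmless in the paper's applications, where $\Omega$ is Lipschitz or $\varphi$ is the restriction of a function on $D$, but worth stating). The density step you flag as delicate closes in the usual way: from the integral identity, H\"older's inequality and the change of variables $y=\Phi_s(x)$ (whose Jacobian is uniformly bounded above and below by (i)) one gets $\|(\psi\circ\Phi_t-\psi)/t\|_{L_\mu(\Omega)}\le C\|X\|_\infty\|\nabla\psi\|_{L_\mu(\Omega)}$ with $C$ independent of $t$, first for smooth $\psi$ and then for all $\psi\in W^1_\mu$ by density; applied to $\psi=\varphi-\varphi_n$ this controls the first term of your decomposition uniformly in $t$, and the proof is complete.
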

	
\bibliographystyle{plain}
\bibliography{refs}

\end{document}